\newtheorem{theorem}{Theorem}[section]
\newtheorem{lemma}{Lemma}[section]
\providecommand{\customgenericname}{}
\newcommand{\newcustomproblem}[2]{%
	\newenvironment{#1}[1]
	{%
		\renewcommand\customgenericname{#2}%
		\renewcommand\theinnercustomgeneric{##1}%
		\innercustomgeneric
	}
	{\endinnercustomgeneric}
}
\newcommand*{\bqed}{\hfill\ensuremath{\blacksquare}}%
\def\dd{\, \mathrm{d}}
\begin{document}
	
	
	\title[Numerical methods for elliptic membrane shells]{Numerical approximation of the solution of an obstacle problem modelling the displacement of elliptic membrane shells via the penalty method}
	
	
	\author[Aaron Meixner]{Aaron Meixner}
	\address{Department of Mathematics The Ohio State University, 100 Math Tower, 231 West 18th Avenue, Columbus, Ohio, USA}
	\email{meixner.8@buckeyemail.osu.edu}
	
	\author[Paolo Piersanti]{Paolo Piersanti}
	\address{Department of Mathematics and Institute for Scientific Computing and Applied Mathematics, Indiana University Bloomington, 729 East Third Street, Bloomington, Indiana, USA}
	\email[Corresponding author]{ppiersan@iu.edu}
	
\today

\begin{abstract}
In this paper we establish the convergence of a numerical scheme based, on the Finite Element Method, for a time-independent problem modelling the deformation of a linearly elastic elliptic membrane shell subjected to remaining confined in a half space. Instead of approximating the original variational inequalities governing this obstacle problem, we approximate the penalized version of the problem under consideration.
A suitable coupling between the penalty parameter and the mesh size will then lead us to establish the convergence of the solution of the discrete penalized problem to the solution of the original variational inequalities.

We also establish the convergence of the Brezis-Sibony scheme for the problem under consideration. Thanks to this iterative method, we can approximate the solution of the discrete penalized problem without having to resort to nonlinear optimization tools.

Finally, we present numerical simulations validating our new theoretical results.

\smallskip

\noindent \textbf{Keywords.} Obstacle problems $\cdot$ Variational Inequalities $\cdot$ Elasticity theory $\cdot$ Finite Difference Quotients $\cdot$ Penalty Method $\cdot$ Finite Element Method
\end{abstract}

\maketitle

\tableofcontents

\section{Introduction}
\label{sec0}
In this paper we establish the convergence of a numerical scheme, based on the Finite Element Method, for approximating the solution of a set of variational inequalities modelling the displacement of a linearly elastic elliptic membrane shell subject to remaining confined in a prescribed half space.

Differently from the numerical scheme presented in~\cite{PS}, where the authors studied the convergence of a numerical scheme based on the Finite Element Method for approximating the solution of a fourth order set of variational inequalities modelling the displacement of a shallow shell which, we recall, takes the form of a Kirchhoff-Love vector field, the solution of the problem we are studying in this paper is a vector field and the variational inequalities we shall be considering involve all the three components of one such displacement vector field.

Critical to establishing the convergence of the finite element approximation of the solution of the problem under consideration is the augmentation of regularity of the solution of the governing variational inequalities.
This preparatory result improves the standard penalization argument extensively discussed in~\cite{Lions1969} and lets us infer \emph{how fast} the penalized solution converges to the solution of the original variational inequalities.

A similar numerical analysis has been treated by Scholz in the paper~\cite{Scholz1984} where, however, the author resorted to the very peculiar assumption $(\ast)$ on the elliptic operator under consideration.
We will replace this assumption by a more reasonable geometrical assumption, which is exactly the assumption needed to ensure the ``density property'' devised by Ciarlet, Mardare \& Piersanti in~\cite{CiaMarPie2018b,CiaMarPie2018}. In addition to this, the augmentation of regularity argument carried out in~\cite{Scholz1984} is only valid for scalar functions. The fact that the solution of the variational problem under consideration is a vector field renders this analysis substantially more complicated than in the scalar case.

Other references about numerical approximations of the solutions of obstacle problems via the Finite Element Method are, for instance, the seminal paper by Falk~\cite{Falk1974}, where the author exploited the augmentation of regularity result established by Brezis and Stampacchia~\cite{BrezisStampacchia1968}. The scheme there proposed, however, seems not to be reproducible in the case where the unknown of the variational problem under consideration is a vector field.

The study of the augmentation of regularity of solutions for boundary value problems modelled via elliptic equations began between the end of the Fifties and the early Sixties, when Agmon, Douglis \& Nirenberg published the two pioneering papers~\cite{AgmDouNir1959} and~\cite{AgmDouNir1964} about the regularity properties of solutions of elliptic systems up to the boundary of the integration domain.

The augmentation of regularity for solutions of variational inequalities for scalar functions was first addressed by Frehse in the early Seventies~\cite{Frehse1971,Frehse1973}. In the late Seventies and early Eighties, Caffarelli and his collaborators published the two papers~\cite{Caffarelli1979,CafFriTor1982}, where they proved that the solution of an obstacle problem for the biharmonic operator (cf., e.g., Section~6.7 of~\cite{PGCLNFAA}) could not be \emph{too regular}. It was recently established in~\cite{Pie2020-1} that the solution of an obstacle problem for linearly elastic shallow shells enjoys higher regularity properties in the interior of the domain where it is defined. To our best knowledge, the results contained in~\cite{Pie2020-1} constitute the first attempt where the augmented regularity of a vector field solving a set of variational inequalities is studied.

Augmentation of regularity for linear problems in elasticity theory was treated, for instance, by Geyomonat in the seminal paper~\cite{Geymonat1965}, by Alexandrescu-Iosifescu~\cite{Iosifescu1994}, where the augmentation of regularity for Koiter's model is considered, and by Genevey in~\cite{Genevey1996}, where the higher regularity of the solution for a variational problem modelling the displacement of a linearly elastic elliptic membrane shell is established.

To our best knowledge, the only record in the literature treating the augmentation of regularity of the solution of second order variational inequalities in the case where one such solution is a vector field and the constraint defining the non-empty, closed, and convex subset of the Sobolev space where the solution is sought is expressed in terms of all of the three components of the displacement vector field is the recent paper~\cite{Pie-2022-interior}.

This paper is divided into ten sections (including this one). In section~\ref{sec1} we present some background and notation.

In section~\ref{sec2} we recall the formulation and the properties of a three-dimensional obstacle problem for a ``general'' linearly elastic shell. It is worth mentioning that this three-dimensional problem is the starting point for deriving the variational formulation of the two-dimensional problem, whose solution regularity is the object of interest of this paper.

In section~\ref{sec3} we scale the original three-dimensional problem over a domain of fixed thickness and we state the corresponding scaled problem, modelled by a set of variational inequalities. We then recall the result of the asymptotic analysis conducted in~\cite{CiaMarPie2018b,CiaMarPie2018}, we state the two-dimensional limit problem obtained as a result of an application of the ``density property'' and, finally, we de-scale the limit problem by re-introducing the thickness parameter. 

In section~\ref{sec:penalty}, we establish the existence and uniqueness of the solution for the de-scaled penalized limit problem, after recalling the regularity properties of the penalty operator entering the model under consideration.

In section~\ref{sec:aug-interior}, we establish the augmentation of regularity up to the boundary of the de-scaled penalized problem. As a consequence of this, we are able to prove that the solution of the de-scaled variational inequalities is actually the weak limit of the sequence of solutions of the de-scaled penalized problems as the penalty parameter tends to zero with respect to a vector space which is is characterized by a higher regularity than the one where the search for minimizers of the energy functional was originally performed.

In section~\ref{approx:original} we show that the sequence of solutions of the de-scaled penalized problems converges to the solution of the de-scaled variational inequalities at a polynomial rate. To obtain this result, the augmentation of regularity devised in section~\ref{sec:aug-interior} will be playing a crucial role.

In section~\ref{approx:penalty} we approximate the solution of the de-scaled penalized problem by a Finite Element Method, the convergence of which shall strongly be hinging on a suitable coupling between the penalty parameter and the mesh size.

In section~\ref{approx:BrezisSibony}, we prove that the iterative scheme originally proposed by Brezis and Sibony in the seminal paper~\cite{BrezisSibony1968} makes possible to approximate the solution of the discrete penalized problem introduced in section~\ref{approx:penalty} without having to resort to nonlinear optimization tools like, for instance, the Primal-Dual Active Set Method and the Gradient Descent Method.

Finally, in section~\ref{numerics} we present numerical experiments meant to validate our theoretical results.

\section{Background and notation}
\label{sec1}

For a complete overview about the classical notions of differential geometry used in this paper, see, e.g.~\cite{Ciarlet2000} or \cite{Ciarlet2005}.

Greek indices, except $\varepsilon$, take their values in the set $\{1,2\}$, while Latin indices, except when they are used for ordering sequences, take their values in the set $\{1,2,3\}$, and, unless differently specified, the summation convention with respect to repeated indices is used jointly with these two rules. 
As a model of the three-dimensional ``physical'' space $\mathbb{R}^3$, we take a \emph{real three-dimensional affine Euclidean space}, i.e., a set in which a point $O \in\mathbb{R}^3$ has been chosen as the \emph{origin} and with which a \emph{real three-dimensional Euclidean space}, denoted $\mathbb{E}^3$, is associated. We equip $\mathbb{E}^3$ with an \emph{orthonormal basis} consisting of three vectors $\bm{e}^i$, with components $e^i_j=\delta^i_j$. 

The definition of $\mathbb{R}^3$ as an affine Euclidean space means that with any point $x \in \mathbb R^3$ is associated an uniquely determined vector $\boldsymbol{Ox} \in \mathbb{E}^3$. The origin $O \in \mathbb{R}^3$ and the orthonormal vectors $\bm{e}^i \in \mathbb{E}^3$ together constitute a \emph{Cartesian frame} in $\mathbb{R}^3$ and the three components $x_i$ of the vector $\boldsymbol{Ox}$ over the basis formed by $\bm{e}^i$ are called the \emph{Cartesian coordinates} of $x \in \mathbb{R}^3$, or the \emph{Cartesian components} of $\boldsymbol{Ox} \in \mathbb{E}^3$. Once a Cartesian frame has been chosen, any point $x \in \mathbb{R}^3$ may be thus \emph{identified} with the vector $\boldsymbol{Ox}=x_i \bm{e}^i \in \mathbb{E}^3$. As a result, a set in $\mathbb{R}^3$ can be identified with a ``physical'' body in the Euclidean space $\mathbb{E}^3$.
The Euclidean inner product and the vector product of $\bm{u}, \bm{v} \in \mathbb{E}^3$ are respectively denoted by $\bm{u} \cdot \bm{v}$ and $\bm{u} \wedge \bm{v}$; the Euclidean norm of $\bm{u} \in \mathbb{E}^3$ is denoted by $\left|\bm{u} \right|$. The notation $\delta^j_i$ designates the Kronecker symbol.

Given an open subset $\Omega$ of $\mathbb{R}^n$, where $n \ge 1$, we denote the usual Lebesgue and Sobolev spaces by $L^2(\Omega)$, $L^1_{\textup{loc}}(\Omega)$, $H^1(\Omega)$, $H^1_0 (\Omega)$, $H^1_{\textup{loc}}(\Omega)$, and the notation $\mathcal{D} (\Omega)$ designates the space of all functions that are infinitely differentiable over $\Omega$ and have compact supports in $\Omega$. We denote $\left\| \cdot \right\|_X$ the norm in a normed vector space $X$. Spaces of vector-valued functions are written in boldface.
The Euclidean norm of any point $x \in \Omega$ is denoted by $|x|$. 

The boundary $\Gamma$ of an open subset $\Omega$ in $\mathbb{R}^n$ is said to be Lipschitz-continuous if the following conditions are satisfied (cf., e.g., Section~1.18 of~\cite{PGCLNFAA}): Given an integer $s\ge 1$, there exist constants $\alpha_1>0$ and $L>0$, a finite number of local coordinate systems, with coordinates 
$$
\bm{\phi}'_r=(\phi_1^r, \dots, \phi_{n-1}^r) \in \mathbb{R}^{n-1} \textup{ and } \phi_r=\phi_n^r, 1 \le r \le s,
$$ 
sets
$$
\tilde{\omega}_r:=\{\bm{\phi}_r \in\mathbb{R}^{n-1}; |\bm{\phi}_r|<\alpha_1\},\quad 1 \le r \le s,
$$
and corresponding functions
$$
\tilde{\theta}_r:\tilde{\omega}_r \to\mathbb{R},\quad 1 \le r \le s,
$$
such that
$$
\Gamma=\bigcup_{r=1}^s \{(\bm{\phi}'_r,\phi_r); \bm{\phi}'_r \in \tilde{\omega}_r \textup{ and }\phi_r=\tilde{\theta}_r(\bm{\phi}'_r)\},
$$
and 
$$
|\tilde{\theta}_r(\bm{\phi}'_r)-\tilde{\theta}_r(\bm{\upsilon}'_r)|\le L |\bm{\phi}'_r-\bm{\upsilon}'_r|, \textup{ for all }\bm{\phi}'_r, \bm{\upsilon}'_r \in \tilde{\omega}_r, \textup{ and all }1\le r\le s.
$$

We observe that the second last formula takes into account overlapping local charts, while the last set of inequalities expresses the Lipschitz continuity of the mappings $\tilde{\theta}_r$.

An open set $\Omega$ is said to be locally on the same side of its boundary $\Gamma$ if, in addition, there exists a constant $\alpha_2>0$ such that
\begin{align*}
	\{(\bm{\phi}'_r,\phi_r);\bm{\phi}'_r \in\tilde{\omega}_r \textup{ and }\tilde{\theta}_r(\bm{\phi}'_r) < \phi_r < \tilde{\theta}_r(\bm{\phi}'_r)+\alpha_2\}&\subset \Omega,\quad\textup{ for all } 1\le r\le s,\\
	\{(\bm{\phi}'_r,\phi_r);\bm{\phi}'_r \in\tilde{\omega}_r \textup{ and }\tilde{\theta}_r(\bm{\phi}'_r)-\alpha_2 < \phi_r < \tilde{\theta}_r(\bm{\phi}'_r)\}&\subset \mathbb{R}^n\setminus\overline{\Omega},\quad\textup{ for all } 1\le r\le s.
\end{align*}

A \emph{domain in} $\mathbb{R}^n$ is a bounded and connected open subset $\Omega$ of $\mathbb{R}^n$, whose boundary $\partial \Omega$ is Lipschitz-continuous, the set $\Omega$ being locally on a single side of $\partial \Omega$.

Let $\omega$ be a domain in $\mathbb{R}^2$ with boundary $\gamma:=\partial\omega$,  and let $\omega_1 \subset \omega$. The special notation $\omega_1 \subset \subset \omega$ means that $\overline{\omega_1} \subset \omega$ and $\textup{dist}(\gamma,\partial\omega_1):=\min\{|x-y|;x \in \gamma \textup{ and } y \in \partial\omega_1\}>0$.
Let $y = (y_\alpha)$ denote a generic point in $\omega$, and let $\partial_\alpha := \partial / \partial y_\alpha$. A mapping $\bm{\theta} \in \mathcal{C}^1(\overline{\omega}; \mathbb{E}^3)$ is said to be an \emph{immersion} if the two vectors
$$
\bm{a}_\alpha (y) := \partial_\alpha \bm{\theta} (y)
$$
are linearly independent at each point $y \in \overline{\omega}$. Then the set $\bm{\theta} (\overline{\omega})$ is a \emph{surface in} $\mathbb{E}^3$, equipped with $y_1, y_2$ as its \emph{curvilinear coordinates}. Given any point $y\in \overline{\omega}$, the linear combinations of the vectors $\bm{a}_\alpha (y)$ span the \emph{tangent plane} to the surface $\bm{\theta} (\overline{\omega})$ at the point $\bm{\theta} (y)$, the unit vector
$$
\bm{a}_3 (y) := \frac{\bm{a}_1(y) \wedge \bm{a}_2 (y)}{|\bm{a}_1(y) \wedge \bm{a}_2 (y)|}
$$
is orthogonal to $\bm{\theta} (\overline{\omega})$ at the point $\bm{\theta} (y)$, the three vectors $\bm{a}_i(y)$ form the \emph{covariant} basis at the point $\bm{\theta} (y)$, and the three vectors $\bm{a}^j(y)$ defined by the relations
$$
\bm{a}^j(y) \cdot \bm{a}_i (y) = \delta^j_i
$$
form the \emph{contravariant} basis at $\bm{\theta} (y)$; note that the vectors $\bm{a}^\beta (y)$ also span the tangent plane to $\bm{\theta} (\overline{\omega})$ at $\bm{\theta} (y)$ and that $\bm{a}^3(y) = \bm{a}_3 (y)$.

The \emph{first fundamental form} of the surface $\bm{\theta} (\overline{\omega})$ is then defined by means of its \emph{covariant components}
$$
a_{\alpha \beta} := \bm{a}_\alpha \cdot \bm{a}_\beta = a_{\beta \alpha} \in \mathcal{C}^0 (\overline{\omega}),
$$
or by means of its \emph{contravariant components}
$$
a^{\alpha \beta}:= \bm{a}^\alpha \cdot \bm{a}^\beta = a^{\beta \alpha}\in \mathcal{C}^0(\overline{\omega}).
$$

Note that the symmetric matrix field $(a^{\alpha \beta})$ is then the inverse of the positive-definite matrix field $(a_{\alpha \beta})$, that $\bm{a}^\beta = a^{\alpha \beta}\bm{a}_\alpha$ and $\bm{a}_\alpha = a_{\alpha \beta} \bm{a}^\beta$, and that the \emph{area element} along $\bm{\theta} (\overline{\omega})$ is given at each point $\bm{\theta}(y), y \in \overline{\omega}$, by $\sqrt{a(y)} \dd y$, where
$$
a := \det(a_{\alpha \beta}) \in \mathcal{C}^0(\overline{\omega}), 
$$
and satisfies $a_0 \le a(y) \le a_1$, for all $y\in\overline{\omega}$ for some $a_0, a_1>0$.

Given an immersion $\bm{\theta} \in \mathcal{C}^2(\overline{\omega};\mathbb{E}^3)$, the \emph{second fundamental form} of the surface $\bm{\theta}(\overline{\omega})$ is defined by means of its \emph{covariant components}
$$
b_{\alpha \beta}:= \partial_\alpha \bm{a}_\beta \cdot \bm{a}_3 = -\bm{a}_\beta \cdot \partial_\alpha \bm{a}_3 = b_{\beta \alpha} \in \mathcal{C}^0(\overline{\omega}),
$$
or by means of its \emph{mixed components}
$$
b^\beta_\alpha := a^{\beta \sigma} b_{\alpha \sigma} \in \mathcal{C}^0(\overline{\omega}),
$$
and the \emph{Christoffel symbols} associated with the immersion $\bm{\theta}$ are defined by
$$
\Gamma^\sigma_{\alpha \beta}:= \partial_\alpha \bm{a}_\beta \cdot \bm{a}^\sigma = \Gamma^\sigma_{\beta \alpha} \in \mathcal{C}^0 (\overline{\omega}).
$$

The \emph{Gaussian curvature} at each point $\bm{\theta} (y) , \, y \in \overline{\omega}$, of the surface $\bm{\theta} (\overline{\omega})$ is defined by
$$
\kappa (y) := \frac{\det (b_{\alpha \beta} (y))}{\det (a_{\alpha \beta} (y))} = \det \left( b^\beta_\alpha (y)\right).
$$

Observe that the denominator in the above relation does not vanish since $\bm{\theta}$ is assumed to be an immersion. Note that the Gaussian curvature $\kappa (y)$ at the point $\bm{\theta} (y)$ is also equal to the product of the two principal curvatures at this point.

Given an immersion
$\bm{\theta} \in \mathcal{C}^2 (\overline{\omega}; \mathbb{E}^3)$ and a
vector field $\bm{\eta} = (\eta_i) \in \mathcal{C}^1(\overline{\omega};
\mathbb{R}^3)$, the vector field
$$
\tilde{\bm{\eta}} := \eta_i \bm{a}^i
$$
may be viewed as the \emph{displacement field of the surface} $\bm{\theta} (\overline{\omega})$, thus defined by means of its \emph{covariant components} $\eta_i$ over the vectors $\bm{a}^i$ of the contravariant bases along the surface. If the norms $\left\| \eta_i \right\|_{\mathcal{C}^1(\overline{\omega})}$ are small enough, the mapping $(\bm{\theta} + \eta_i \bm{a}^i) \in \mathcal{C}^1(\overline{\omega}; \mathbb{E}^3)$ is also an immersion, so that the set $(\bm{\theta} + \eta_i \bm{a}^i) (\overline{\omega})$ is again a surface in $\mathbb{E}^3$, equipped with the same curvilinear coordinates as those of the surface $\bm{\theta} (\overline{\omega})$ and is called the \emph{deformed surface} corresponding to the displacement field $\tilde{\bm{\eta}} = \eta_i \bm{a}^i$. 

It is thus possible to define the first fundamental form of the deformed surface in terms of its covariant components by
\begin{align*}
	a_{\alpha \beta} (\bm{\eta}) :=& (\bm{a}_\alpha + \partial_\alpha \tilde{\bm{\eta}}) \cdot (\bm{a}_\beta + \partial_\beta \tilde{\bm{\eta}}) \\
	=& a_{\alpha \beta} + \bm{a}_\alpha \cdot \partial_\beta \tilde{\bm{\eta}} + \partial_\alpha \tilde{\bm{\eta}} \cdot \bm{a}_\beta + \partial_\alpha \tilde{\bm{\eta}} \cdot \partial_\beta \tilde{\bm{\eta}}.
\end{align*}

The \emph{linear part with  respect to} $\tilde{\bm{\eta}}$ in the difference $(a_{\alpha \beta}(\bm{\eta}) - a_{\alpha \beta})/2$ is called the \emph{linearized change of metric}, or \emph{strain}, \emph{tensor} associated with the displacement field $\eta_i \bm{a}^i$, the covariant components of which are thus defined by
$$
\gamma_{\alpha \beta}(\bm{\eta}) := \dfrac{1}{2}(\bm{a}_\alpha \cdot \partial_\beta \tilde{\bm{\eta}} + \partial_\alpha \tilde{\bm{\eta}} \cdot \bm{a}_\beta) = \frac12 (\partial_\beta \eta_\alpha + \partial_\alpha \eta_\beta ) - \Gamma^\sigma_{\alpha \beta} \eta_\sigma - b_{\alpha \beta} \eta_3 = \gamma_{\beta \alpha} (\bm{\eta}).
$$

In this paper, we shall consider a specific class of surfaces, according to the following definition: Let $\omega$ be a domain in $\mathbb{R}^2$. Then a surface $\bm{\theta} (\overline{\omega})$ defined by means of an immersion $\bm{\theta} \in \mathcal{C}^2(\overline{\omega};\mathbb{E}^3)$ is said to be \emph{elliptic} if its Gaussian curvature $K$ is everywhere strictly positive in $\overline{\omega}$, or equivalently, if there exists a constant $K_0$ such that:
$$
0 < K_0 \le K (y), \text{ for all } y \in \overline{\omega}.
$$

It turns out that, when an \emph{elliptic surface} is subjected to a displacement field $\eta_i \bm{a}^i$ whose \emph{tangential covariant components $\eta_\alpha$ vanish on the entire boundary of the domain $\omega$}, the following inequality holds. Note that the components of the displacement fields and linearized change of metric tensors appearing in the next theorem are no longer assumed to be continuously differentiable functions; they are instead to be understood in a generalised sense, since they now belong to \emph{ad hoc} Lebesgue or Sobolev spaces.

\begin{theorem} 
	\label{korn}
	Let $\omega$ be a domain in $\mathbb{R}^2$ and let an immersion $\bm{\theta} \in \mathcal{C}^3 (\overline{\omega}; \mathbb{E}^3)$ be given such that the surface $\bm{\theta}(\overline{\omega})$ is elliptic. Define the space
	$$
	\bm{V}_M (\omega) := H^1_0 (\omega) \times H^1_0 (\omega) \times L^2(\omega).
	$$
	
	Then, there exists a constant $c_0=c_0(\omega,\bm{\theta})>0$ such that
	$$
	\left\{ \sum_\alpha \left\| \eta_\alpha \right\|^2_{H^1(\omega)} + \left\| \eta_3 \right\|^2_{L^2(\omega)}\right\}^{1/2} \le c_0 \left\{ \sum_{\alpha, \beta} \left\| \gamma_{\alpha \beta}(\bm{\eta}) \right\|_{L^2(\omega)}^2\right\}^{1/2}
	$$
	for all $\bm{\eta}= (\eta_i) \in \bm{V}_M (\omega)$.
	\qed
\end{theorem}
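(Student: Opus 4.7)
The plan is to follow the classical compactness/contradiction scheme that underlies all inequalities of Korn--Sanchez-Palencia type on elliptic surfaces. I would argue by contradiction: suppose the stated inequality fails; then there exists a sequence $(\bm{\eta}^n) \subset \bm{V}_M(\omega)$ satisfying
\[
\sum_\alpha \|\eta^n_\alpha\|^2_{H^1(\omega)} + \|\eta^n_3\|^2_{L^2(\omega)} = 1 \qquad\textrm{and}\qquad \sum_{\alpha,\beta}\|\gamma_{\alpha\beta}(\bm{\eta}^n)\|_{L^2(\omega)} \longrightarrow 0.
\]
By reflexivity, a subsequence satisfies $\eta^n_\alpha \rightharpoonup \eta_\alpha$ in $H^1_0(\omega)$ and $\eta^n_3 \rightharpoonup \eta_3$ in $L^2(\omega)$; Rellich--Kondrachov then upgrades the former to strong convergence in $L^2(\omega)$. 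Passing to the limit in the closed-form expression of $\gamma_{\alpha\beta}$ (which is continuous from $\bm{V}_M(\omega)$ into $\mathcal{D}'(\omega)$) yields $\gamma_{\alpha\beta}(\bm{\eta})=0$ in $L^2(\omega)$ for the weak limit $\bm{\eta}=(\eta_i)$.

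The next step is the \emph{infinitesimal rigidity} argument: an elliptic surface clamped tangentially admits no nontrivial infinitesimal membrane displacement. Rearranging the identity defining the strain tensor and invoking $\gamma_{\alpha\beta}(\bm{\eta})=0$ gives
\[
b_{\alpha\beta}\eta_3 \;=\; \tfrac{1}{2}\bigl(\partial_\alpha\eta_\beta + \partial_\beta\eta_\alpha\bigr) - \Gamma^\sigma_{\alpha\beta}\eta_\sigma.
\]
Since $\bm{\theta}(\overline{\omega})$ is elliptic, the matrix $(b_{\alpha\beta})$ is definite with $\det(b_{\alpha\beta})$ bounded away from zero on $\overline{\omega}$, so it can be inverted pointwise, producing a smooth matrix $(B^{\alpha\beta})$ and an algebraic formula expressing $\eta_3$ in terms of the first derivatives of the $\eta_\alpha$ and of the $\eta_\sigma$ themselves. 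Substituting this expression back and combining the resulting relations with the Gauss and Codazzi--Mainardi equations for the elliptic immersion $\bm{\theta}$ produces a linear second-order elliptic system for the tangential pair $(\eta_1,\eta_2)$ carrying the Dirichlet conditions $\eta_\alpha=0$ on $\gamma$; its uniqueness theorem forces $\eta_\alpha=0$, and then the displayed identity gives $\eta_3=0$.

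The main obstacle is the last step, that of closing the contradiction. The weak limit $\bm{\eta}=0$ is not sufficient; one has to show in addition that $\|\bm{\eta}^n\|_{\bm{V}_M(\omega)} \to 0$ in order to contradict the normalization. This is where the genuinely delicate point lies, because the natural topology controls only the strains and the $L^2$-norms of the components, not the gradients of the tangential components. My plan here is to invoke the classical J.L.\ Lions lemma (if $v\in\mathcal{D}'(\omega)$ satisfies $v\in H^{-1}(\omega)$ and $\partial_\alpha v\in H^{-1}(\omega)$ for $\alpha=1,2$, then $v\in L^2(\omega)$ with a quantitative bound): applied to the differences $\eta^n_\alpha - \eta^m_\alpha$, in combination with the algebraic inversion above applied to the difference sequence (so that $\eta^n_3-\eta^m_3$ is rewritten in terms of strains, which go to $0$ in $L^2$, and of first derivatives of $\eta^n_\alpha-\eta^m_\alpha$), this should show that $(\eta^n_\alpha)$ is Cauchy in $H^1_0(\omega)$ and, in turn, that $(\eta^n_3)$ is Cauchy in $L^2(\omega)$, yielding the required strong convergence and thus the contradiction. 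The most intricate part of the argument will be the careful book-keeping of the Christoffel and $b_{\alpha\beta}$ terms so that the quantities to which Lions' lemma is applied genuinely satisfy its hypotheses on the domain $\omega$.
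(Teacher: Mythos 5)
This theorem is not proved in the paper at all: it is quoted (with references) from Ciarlet--Lods and Ciarlet--Sanchez-Palencia (see Theorem~2.7-3 of Ciarlet's book), so your attempt has to be measured against that classical proof. Your skeleton --- normalize, extract weak limits, pass to the limit in $\gamma_{\alpha\beta}$, invoke infinitesimal rigidity of elliptic surfaces, then upgrade to strong convergence --- is indeed the skeleton of the classical argument. The genuine gap is exactly at the step you flag as delicate, and the Lions-lemma device you propose cannot close it. To make $(\eta^n_\alpha)$ Cauchy in $H^1_0(\omega)$ via the lemma of J.L.~Lions you must show that the second derivatives $\partial_\sigma\partial_\beta(\eta^n_\alpha-\eta^m_\alpha)$ are small in $H^{-1}(\omega)$. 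The only identity available from the definition of the strains reads, schematically, $\partial_\sigma\partial_\beta\eta_\alpha=\partial\bigl(\gamma_{\cdot\cdot}(\bm{\eta})\bigr)+\partial\bigl(\Gamma^\tau_{\cdot\cdot}\eta_\tau\bigr)+\partial\bigl(b_{\cdot\cdot}\eta_3\bigr)$. The first two groups are harmless ($\|\partial\gamma\|_{H^{-1}}\lesssim\|\gamma\|_{L^2}\to0$, and $\|\partial(\Gamma^\tau_{\cdot\cdot}\eta_\tau)\|_{H^{-1}}\lesssim\|\eta_\tau\|_{L^2}$, small by Rellich), but $\|\partial_\sigma(b_{\alpha\beta}\eta_3)\|_{H^{-1}}$ is only bounded by $\|\eta_3\|_{L^2}$ --- precisely the quantity you still need to control --- while your algebraic inversion $\eta_3=B^{\alpha\beta}(\cdots)$ controls $\|\eta_3\|_{L^2}$ only through $\|\partial_\beta\eta_\alpha\|_{L^2}$. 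The scheme is therefore circular: the tangential gradients need $\eta_3$, and $\eta_3$ needs the tangential gradients.

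A symptom that something essential is missing: your quantitative step uses ellipticity only through the invertibility of $(b_{\alpha\beta})$, and that matrix is equally invertible (with $|\det(b_{\alpha\beta})|$ bounded away from zero) for a hyperbolic surface, for which the stated inequality is false; so no argument of this shape can be complete. What the classical proof supplies, and what your proposal lacks, is a preliminary a priori estimate of Agmon--Douglis--Nirenberg type for the first-order system $\gamma_{\alpha\beta}(\bm{\eta})=f_{\alpha\beta}$, which is ADN-elliptic exactly when the Gaussian curvature is $>0$; it yields $\sum_\alpha\|\eta_\alpha\|_{H^1(\omega)}+\|\eta_3\|_{L^2(\omega)}\le C\bigl(\sum_{\alpha,\beta}\|\gamma_{\alpha\beta}(\bm{\eta})\|_{L^2(\omega)}+\sum_\alpha\|\eta_\alpha\|_{L^2(\omega)}+\|\eta_3\|_{H^{-1}(\omega)}\bigr)$, whose lower-order terms are compact with respect to your weak convergences (recall $L^2(\omega)\hookrightarrow H^{-1}(\omega)$ compactly). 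With that estimate your contradiction argument closes immediately; without it, it does not. Finally, note that the infinitesimal-rigidity step with $\eta_3$ merely in $L^2(\omega)$ is itself a nontrivial theorem (Theorem~2.7-2 in Ciarlet's book): your sketch via inversion of $(b_{\alpha\beta})$, Gauss--Codazzi and ``a uniqueness theorem for a second-order elliptic system'' would need the same elliptic-systems machinery to be justified at that low regularity, so it cannot be treated as a routine remark either.
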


The above inequality, which is due to \cite{CiaLods1996a} and \cite{CiaSanPan1996} (see also Theorem~2.7-3 of~\cite{Ciarlet2000}),  constitutes an example of a \emph{Korn's inequality on a surface}, in the sense that it provides an estimate of an appropriate norm of a displacement field defined on a surface in terms of an appropriate norm of a specific ``measure of strain'' (here, the linearized change of metric tensor) corresponding to the displacement field under consideration.

\section{The three-dimensional obstacle problem for a ``general'' linearly elastic shell} \label{Sec:2}
\label{sec2}

Let $\omega$ be a domain in $\mathbb{R}^2$, let $\gamma:= \partial \omega$, and let $\gamma_0$ be a non-empty relatively open subset of $\gamma$. For each $\varepsilon > 0$, we define the sets
$$
\Omega^\varepsilon = \omega \times \left] - \varepsilon , \varepsilon \right[ \text{ and } \Gamma^\varepsilon_0 := \gamma_0 \times \left] - \varepsilon , \varepsilon \right[,
$$
we let $x^\varepsilon = (x^\varepsilon_i)$ designate a generic point in the set $\overline{\Omega^\varepsilon}$, and we let $\partial^\varepsilon_i := \partial / \partial x^\varepsilon_i$. Hence we also have $x^\varepsilon_\alpha = y_\alpha$ and $\partial^\varepsilon_\alpha = \partial_\alpha$.

Given an immersion $\bm{\theta} \in \mathcal{C}^3(\overline{\omega}; \mathbb{E}^3)$ and $\varepsilon > 0$, consider a \emph{shell} with \emph{middle surface} $\bm{\theta} (\overline{\omega})$ and with \emph{constant thickness} $2 \varepsilon$. This means that the \emph{reference configuration} of the shell is the set $\bm{\Theta} (\overline{\Omega^\varepsilon})$, where the mapping $\bm{\Theta} : \overline{\Omega^\varepsilon} \to \mathbb{E}^3$ is defined by
$$
\bm{\Theta} (x^\varepsilon) := \bm{\theta} (y) + x^\varepsilon_3 \bm{a}^3(y) \text{ at each point } x^\varepsilon = (y, x^\varepsilon_3) \in \overline{\Omega^\varepsilon}.
$$

One can then show (cf., e.g., Theorem~3.1-1 of~\cite{Ciarlet2000}) that, if $\varepsilon > 0$ is small enough, such a mapping $\bm{\Theta} \in \mathcal{C}^2(\overline{\Omega^\varepsilon}; \mathbb{E}^3)$ is an \emph{immersion}, in the sense that the three vectors
$$
\bm{g}^\varepsilon_i (x^\varepsilon) := \partial^\varepsilon_i \bm{\Theta} (x^\varepsilon)
$$
are linearly independent at each point $x^\varepsilon \in \overline{\Omega^\varepsilon}$; these vectors  then constitute the \emph{covariant basis} at the point $\bm{\Theta} (x^\varepsilon)$, while the three vectors $\bm{g}^{j, \varepsilon} (x^\varepsilon)$ defined by the relations
$$
\bm{g}^{j, \varepsilon} (x^\varepsilon) \cdot \bm{g}^\varepsilon_i (x^\varepsilon) = \delta^j_i
$$
constitute the \emph{contravariant basis} at the same point. It will be implicitly assumed in the sequel that $\varepsilon > 0$ \emph{is small enough so that $\bm{\Theta} : \overline{\Omega^\varepsilon} \to \mathbb{E}^3$} is an \emph{immersion}.

One then defines the \emph{metric tensor associated with the immersion} $\bm{\Theta}$ by means of its \emph{covariant components}
$$
g^\varepsilon_{ij}:= \bm{g}^\varepsilon_i \cdot \bm{g}^\varepsilon_j \in \mathcal{C}^1(\overline{\Omega^\varepsilon}),
$$
or by means of its \emph{contravariant components}
$$
g^{ij, \varepsilon} := \bm{g}^{i, \varepsilon} \cdot \bm{g}^{j,\varepsilon} \in \mathcal{C}^1(\overline{\Omega^\varepsilon}).
$$

Note that the symmetric matrix field $(g^{ij, \varepsilon})$ is then the inverse of the positive-definite matrix field $(g^\varepsilon_{ij})$, that $\bm{g}^{j, \varepsilon} = g^{ij, \varepsilon} \bm{g}^\varepsilon_i$ and $\bm{g}^\varepsilon_i = g^\varepsilon_{ij} \bm{g}^{j, \varepsilon}$, and that the \emph{volume element} in $\bm{\Theta}(\overline{\Omega^\varepsilon})$ is given at each point $\bm{\Theta} (x^\varepsilon)$, $x^\varepsilon \in \overline{\Omega^\varepsilon}$, by $\sqrt{g^\varepsilon (x^\varepsilon)} \dd x^\varepsilon$, where
$$
g^\varepsilon := \det (g^\varepsilon_{ij}) \in \mathcal{C}^1(\overline{\Omega^\varepsilon}).
$$

One also defines the \emph{Christoffel symbols} associated with the immersion $\bm{\Theta}$ by
$$
\Gamma^{p, \varepsilon}_{ij}:= \partial_i \bm{g}^\varepsilon_j \cdot \bm{g}^{p, \varepsilon} = \Gamma^{p, \varepsilon}_{ji} \in \mathcal{C}^0(\overline{\Omega^\varepsilon}).
$$
Note that $\Gamma^{3,\varepsilon}_{\alpha 3} = \Gamma^{p, \varepsilon}_{33} = 0$.

Given a vector field $\bm{v}^\varepsilon = (v^\varepsilon_i) \in \mathcal{C}^1 (\overline{\Omega^\varepsilon}; \mathbb{R}^3)$, the associated vector field
$$
\tilde{\bm{v}}^\varepsilon := v^\varepsilon_i \bm{g}^{i, \varepsilon}
$$
can be viewed as a \emph{displacement field} of the reference configuration $\bm{\Theta} (\overline{\Omega^\varepsilon})$ of the shell, thus defined by means of its \emph{covariant components} $v^ \varepsilon_i$ over the vectors $\bm{g}^{i, \varepsilon}$ of the contravariant bases in the reference configuration.

If the norms $\left\| v^\varepsilon_i \right\|_{\mathcal{C}^1 (\overline{\Omega^\varepsilon})}$ are small enough, the mapping $(\bm{\Theta} + v^\varepsilon_i \bm{g}^{i, \varepsilon})$ is also an immersion, so that one can also define the metric tensor of the \emph{deformed configuration} $(\bm{\Theta} + v^\varepsilon_i \bm{g}^{i, \varepsilon})(\overline{\Omega^\varepsilon})$ by means of its covariant components
\begin{align*}
	g^\varepsilon_{ij} (v^\varepsilon) :=& (\bm{g}^\varepsilon_i + \partial^\varepsilon_i \tilde{\bm{v}}^\varepsilon ) \cdot (\bm{g}^\varepsilon_j + \partial^\varepsilon_j \tilde{\bm{v}}^\varepsilon) \\
	=& g^\varepsilon_{ij} + \bm{g}^\varepsilon_i \cdot \partial_j \tilde{\bm{v}}^\varepsilon + \partial^\varepsilon_i \tilde{\bm{v}}^\varepsilon \cdot \bm{g}^\varepsilon_j + \partial_i \tilde{\bm{v}}^\varepsilon \cdot \partial_j \tilde{\bm{v}}^\varepsilon.
\end{align*}

The linear part with respect to $\tilde{\bm{v}}^\varepsilon$ in the difference $(g^\varepsilon_{ij} (\bm{v}^\varepsilon) - g^\varepsilon_{ij})/2$ is then called the \emph{linearized strain tensor} associated with the displacement field $v^\varepsilon_i \bm{g}^{i, \varepsilon}$, the covariant components of which are thus defined by
$$
e^\varepsilon_{i\|j} (\bm{v}^\varepsilon) := \frac12 \left( \bm{g}^\varepsilon_i \cdot \partial_j \tilde{\bm{v}}^\varepsilon + \partial^\varepsilon_i \tilde{\bm{v}}^\varepsilon \cdot \bm{g}^\varepsilon_j \right) = \frac12 (\partial^\varepsilon_j v^\varepsilon_i + \partial^\varepsilon_i v^\varepsilon_j) - \Gamma^{p, \varepsilon}_{ij} v^\varepsilon_p = e_{j\|i}^\varepsilon (\bm{v}^\varepsilon).
$$

The functions $e^\varepsilon_{i\|j} (\bm{v}^\varepsilon)$ are called the \emph{linearized strains in curvilinear coordinates} associated with the displacement field $v^\varepsilon_i \bm{g}^{i, \varepsilon}$.

We assume throughout this paper that, for each $\varepsilon > 0$, the reference configuration $\bm{\Theta} (\overline{\Omega^\varepsilon})$ of the shell is a \emph{natural state} (i.e., stress-free) and that the material constituting the shell is \emph{homogeneous}, \emph{isotropic}, and \emph{linearly elastic}. The behavior of such an elastic material is thus entirely governed by its two \emph{Lam\'{e} constants} $\lambda \geq 0$ and $\mu > 0$ (for details, see, e.g., Section~3.8 of~\cite{Ciarlet1988}).

We will also assume that the shell is subjected to \emph{applied body forces} whose density per unit volume is defined by means of its covariant components $f^{i, \varepsilon} \in L^2(\Omega^\varepsilon)$, and to a \emph{homogeneous boundary condition of place} along the portion $\Gamma^\varepsilon_0$ of its lateral face (i.e., the displacement vanishes on $\Gamma^\varepsilon_0$).

In this paper, we consider a specific \emph{obstacle problem} for such a shell, in the sense that the shell is also subjected to a \emph{confinement condition}, expressing that any \emph{admissible} deformed configuration remains in a \emph{half-space} of the form
$$
\mathbb{H} := \{\bm{Ox} \in \mathbb{E}^3; \, \bm{Ox} \cdot \bm{q} \geq 0\},
$$
where $\bm{q} \in\mathbb{E}^3$ is a \emph{non-zero vector} given once and for all. In other words, any admissible displacement field must satisfy
$$
\left(\bm{\Theta} (x^\varepsilon) + v^\varepsilon_i (x^\varepsilon) \bm{g}^{i,\varepsilon} (x^\varepsilon)\right) \cdot \bm{q} \ge 0
$$
for all $x^\varepsilon \in \overline{\Omega^\varepsilon}$, or possibly only for almost all (a.a. in what follows) $x^\varepsilon \in \Omega^\varepsilon$ when the covariant components $v^\varepsilon_i$ are required to belong to the Sobolev space $H^1(\Omega^\varepsilon)$ as in Theorem \ref{t:2} below.

We will of course assume that the reference configuration satisfies the confinement condition, i.e., that
$$
\bm{\Theta} (\overline{\Omega^\varepsilon}) \subset \mathbb{H}.
$$

It is to be emphasized that the above confinement condition \emph{considerably departs} from the usual \emph{Signorini condition} favoured by most authors, who usually require that only the points of the undeformed and deformed ``lower face'' $\omega \times \{-\varepsilon\}$ of the reference configuration satisfy the confinement condition (see, e.g., \cite{Leger2008}, \cite{LegMia2018}, \cite{MezChaBen2020}, \cite{Rodri2018}). Clearly, the confinement condition considered in the present paper is more physically realistic, since a Signorini condition imposed only on the lower face of the reference configuration does not prevent -- at least ``mathematically'' -- other points of the deformed reference configuration to ``cross'' the plane  $\{\bm{Ox}\in \mathbb{E}^3; \; \bm{Ox} \cdot \bm{q} = 0\}$ and then to end up on the ``other side'' of this plane. The mathematical models characterized by the confinement condition introduced beforehand, confinement condition which is also considered in the seminal paper~\cite{Leger2008} in a different geometrical framework, do not take any traction forces into account. Indeed, by Classical Mechanics, there could be no traction forces applied to the portion of the three-dimensional shell boundary that engages contact with the obstacle. In the same spirit as~\cite{Pie2023}, friction is not considered in the context of this analysis.

Unlike the classical \emph{Signorni condition}, the confinement condition here considered is more suitable in the context of multi-scales multi-bodies problems like, for instance, the study of the motion of the human heart valves, conducted by Quarteroni and his associates in~\cite{Quarteroni2021-3,Quarteroni2021-2,Quarteroni2021-1} and the references therein.

Such a confinement condition renders the study of this problem considerably more difficult, however, as the constraint now bears on a vector field, the displacement vector field of the reference configuration, instead of on only a single component of this field.

The mathematical modelling of such an \emph{obstacle problem for a linearly elastic shell} is then clear; since, \emph{apart from} the confinement condition, the rest, i.e., the \emph{function space} and the expression of the quadratic \emph{energy} $J^\varepsilon$, is classical (viz.~\cite{Ciarlet2000}). More specifically, let
$$
A^{ijk\ell, \varepsilon} := \lambda g^{ij, \varepsilon} g^{k\ell, \varepsilon} + \mu \left( g^{ik, \varepsilon} g^{j\ell, \varepsilon} + g^{i\ell, \varepsilon} g^{jk, \varepsilon} \right) =
A^{jik\ell, \varepsilon} =  A^{k\ell ij, \varepsilon} 
$$
denote the contravariant components of the \emph{elasticity tensor} of the elastic material constituting the shell. Then the unknown of the problem, which is the vector field $\bm{u}^\varepsilon = (u^\varepsilon_i)$ where the functions $u^\varepsilon_i : \overline{\Omega^\varepsilon} \to \mathbb{R}$ are the three covariant components of the unknown ``three-dimensional'' displacement vector field $u^\varepsilon_i \bm{g}^{i, \varepsilon}$ of the reference configuration of the shell, should minimize the energy $J^\varepsilon : \bm{H}^1(\Omega^\varepsilon) \to \mathbb{R}$ defined by
$$
J^\varepsilon (\bm{v}^\varepsilon) := \frac12 \int_{\Omega^\varepsilon} A^{ijk\ell, \varepsilon} e^\varepsilon_{k\| \ell}  (\bm{v}^\varepsilon)e^\varepsilon_{i\|j} (\bm{v}^\varepsilon) \sqrt{g^\varepsilon} \dd x^\varepsilon - \int_{\Omega^\varepsilon} f^{i, \varepsilon} v^\varepsilon_i \sqrt{g^\varepsilon} \dd x^\varepsilon
$$
for each $\bm{v}^\varepsilon = (v^\varepsilon_i) \in \bm{H}^1(\Omega^\varepsilon)$
over the \emph{set of admissible displacements} defined by:
\begin{equation*}
\bm{U}(\Omega^\varepsilon):=\{ \bm{v}^\varepsilon=(v^\varepsilon_i) \in \bm{H}^1(\Omega^\varepsilon); \bm{v}^\varepsilon = \bm{0} \text{ on } \Gamma^\varepsilon_0 \textup{ and } (\bm{\Theta}(x^\varepsilon)+v^\varepsilon_i(x^\varepsilon) \bm{g}^{i,\varepsilon}(x^\varepsilon)) \cdot \bm{q} \ge 0 \textup{ for a.a. } x^\varepsilon \in \Omega^\varepsilon\}. 
\end{equation*}

The solution to this \emph{minimization problem} exists and is unique, and it can be also characterized as the solution of a set of appropriate variational inequalities (cf., Theorem~2.1 of~\cite{CiaMarPie2018}).

\begin{theorem} \label{t:2}
	The quadratic minimization problem: Find a vector field $\bm{u}^\varepsilon \in \bm{U} (\Omega^\varepsilon)$ such that
	$$
	J^\varepsilon (\bm{u}^\varepsilon) = \inf_{\bm{v}^\varepsilon \in \bm{U} (\Omega^\varepsilon)} J^\varepsilon (\bm{v}^\varepsilon)
	$$
	has one and only one solution. Besides, the vector field $\bm{u}^\varepsilon$ is also the unique solution of the variational problem $\mathcal{P} (\Omega^\varepsilon)$: Find $\bm{u}^\varepsilon \in \bm{U} (\Omega^\varepsilon)$ that satisfies the following variational inequalities:
	$$
	\int_{\Omega^\varepsilon} 
	A^{ijk\ell, \varepsilon} e^\varepsilon_{k\| \ell}  (\bm{u}^\varepsilon)
	\left( e^\varepsilon_{i\| j}  (\bm{v}^\varepsilon) -  e^\varepsilon_{i\| j}  (\bm{u}^\varepsilon)  \right) \sqrt{g^\varepsilon} \dd x^\varepsilon \geq \int_{\Omega^\varepsilon} f^{i , \varepsilon} (v^\varepsilon_i - u^\varepsilon_i)\sqrt{g^\varepsilon} \dd x^\varepsilon
	$$
	for all $\bm{v}^\varepsilon = (v^\varepsilon_i) \in \bm{U}(\Omega^\varepsilon)$.
	\qed
\end{theorem}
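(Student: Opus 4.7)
The plan is to establish existence and uniqueness by the classical theory of elliptic variational inequalities (Lions--Stampacchia), and then translate the minimization characterization into the variational inequality characterization by means of the convexity of the admissible set and the differentiability of the quadratic energy.

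First I would verify that the admissible set $\bm{U}(\Omega^\varepsilon)$ is non-empty, convex, and closed in $\bm{H}^1(\Omega^\varepsilon)$. Non-emptiness follows from $\bm{v}^\varepsilon = \bm{0} \in \bm{U}(\Omega^\varepsilon)$, since the reference configuration itself satisfies the confinement condition $\bm{\Theta}(\overline{\Omega^\varepsilon}) \subset \mathbb{H}$. Convexity follows because both the boundary condition $\bm{v}^\varepsilon = \bm{0}$ on $\Gamma^\varepsilon_0$ and the confinement constraint $(\bm{\Theta}(x^\varepsilon) + v^\varepsilon_i(x^\varepsilon) \bm{g}^{i,\varepsilon}(x^\varepsilon)) \cdot \bm{q} \geq 0$ are affine in $\bm{v}^\varepsilon$. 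Closedness in $\bm{H}^1(\Omega^\varepsilon)$ follows by taking a strongly convergent sequence in $\bm{U}(\Omega^\varepsilon)$, extracting a subsequence converging pointwise a.e., passing to the limit in the affine constraint, and using continuity of the trace.

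Next I would establish that $J^\varepsilon$ is strictly convex, continuous, and coercive on the closed affine subspace $\{\bm{v}^\varepsilon \in \bm{H}^1(\Omega^\varepsilon) : \bm{v}^\varepsilon = \bm{0} \text{ on } \Gamma^\varepsilon_0\}$, which contains $\bm{U}(\Omega^\varepsilon)$. The key ingredients are the uniform positive-definiteness of the elasticity tensor $A^{ijk\ell,\varepsilon}$, which follows from $\lambda \geq 0$ and $\mu > 0$ together with the uniform positive-definiteness of $(g^{ij,\varepsilon})$ on $\overline{\Omega^\varepsilon}$, and a \emph{three-dimensional Korn's inequality in curvilinear coordinates} (of the type due to Ciarlet--Lods). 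This yields a constant $C^\varepsilon > 0$ such that
\[
\sum_i \|v^\varepsilon_i\|^2_{H^1(\Omega^\varepsilon)} \leq C^\varepsilon \sum_{i,j} \|e^\varepsilon_{i\|j}(\bm{v}^\varepsilon)\|^2_{L^2(\Omega^\varepsilon)}
\]
for all $\bm{v}^\varepsilon$ vanishing on $\Gamma^\varepsilon_0$; combined with the bound $f^{i,\varepsilon} \in L^2(\Omega^\varepsilon)$, this delivers coercivity of $J^\varepsilon$.

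With these ingredients, the classical Lions--Stampacchia theorem applied to the strictly convex, continuous, coercive functional $J^\varepsilon$ on the non-empty, closed, convex set $\bm{U}(\Omega^\varepsilon)$ gives the existence and uniqueness of the minimizer $\bm{u}^\varepsilon$. To derive the variational inequality characterization, I would use that for any $\bm{v}^\varepsilon \in \bm{U}(\Omega^\varepsilon)$ and any $t \in [0,1]$, the convex combination $\bm{u}^\varepsilon + t(\bm{v}^\varepsilon - \bm{u}^\varepsilon)$ lies in $\bm{U}(\Omega^\varepsilon)$, so the minimality of $\bm{u}^\varepsilon$ yields $J^\varepsilon(\bm{u}^\varepsilon + t(\bm{v}^\varepsilon - \bm{u}^\varepsilon)) \geq J^\varepsilon(\bm{u}^\varepsilon)$; dividing by $t > 0$, letting $t \to 0^+$, and using the bilinearity of $e^\varepsilon_{i\|j}$ together with the symmetry $A^{ijk\ell,\varepsilon} = A^{k\ell ij,\varepsilon}$, one obtains the stated variational inequalities. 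Conversely, any solution of $\mathcal{P}(\Omega^\varepsilon)$ minimizes $J^\varepsilon$ over $\bm{U}(\Omega^\varepsilon)$ because $J^\varepsilon$ is convex and quadratic, so the solutions of the two formulations coincide.

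The main obstacle in this program is the coercivity step, which requires the three-dimensional Korn inequality in curvilinear coordinates to be valid on $\Omega^\varepsilon$ with $\bm{\Theta}$ an immersion; however, this is implicit in the standing assumption that $\varepsilon > 0$ is sufficiently small and is a standard result in shell theory (see, e.g., Theorem~1.7-4 of~\cite{Ciarlet2000}). All remaining steps are then routine applications of the convex minimization framework.
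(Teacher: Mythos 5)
Your proposal is correct and follows essentially the same route as the paper, which states this result without proof by citing Theorem~2.1 of the reference by Ciarlet, Mardare and Piersanti: there, as in your argument, one checks that $\bm{U}(\Omega^\varepsilon)$ is non-empty, closed and convex, obtains coercivity and strict convexity of $J^\varepsilon$ from the uniform positive-definiteness of $A^{ijk\ell,\varepsilon}$ together with the three-dimensional Korn inequality in curvilinear coordinates, and passes between the minimization and variational-inequality formulations by the standard convexity argument.
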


Since $\bm{\theta} (\overline{\omega}) \subset \bm{\Theta} (\overline{\Omega^\varepsilon})$, it evidently follows that $\bm{\theta} (y) \cdot \bm{q} \geq 0$ for all $y \in \overline{\omega}$. But in fact, a stronger property holds (cf., Lemma~2.1 of~\cite{CiaMarPie2018}, and see also~\cite{Pie-2022-jde} for a different approach to the asymptotic analysis):

\begin{lemma}
	\label{lem0}
	Let $\omega$ be a domain in $\mathbb{R}^2$, let $\bm{\theta} \in \mathcal{C}^1(\overline{\omega}; \mathbb{E}^3)$ be an immersion, let $\bm{q} \in \mathbb{E}^3$ be a non-zero vector, and let $\varepsilon > 0$. Then the inclusion
	$$
	\bm{\Theta} (\overline{\Omega^\varepsilon} ) \subset \mathbb{H} = \{ x \in \mathbb{E}^3; \; \boldsymbol{Ox} \cdot \bm{q} \geq 0\}
	$$
	implies that
	$$
	\min_{y \in \overline{\omega}} (\bm{\theta} (y) \cdot \bm{q}) > 0.
	$$
	\qed
\end{lemma}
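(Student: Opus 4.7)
My plan is to argue by contradiction. Since $f(y):=\bm{\theta}(y)\cdot\bm{q}$ is continuous on the compact set $\overline{\omega}$, its minimum is attained at some $y_0\in\overline{\omega}$; I assume $f(y_0)=0$ (recall that $f\ge 0$ on $\overline{\omega}$, as noted just before the lemma) and aim to derive $\bm{q}=\bm{0}$, contradicting the standing hypothesis $\bm{q}\ne\bm{0}$.

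The first step is to rewrite the three-dimensional inclusion $\bm{\Theta}(\overline{\Omega^\varepsilon})\subset\mathbb{H}$ as a pointwise statement on $\overline{\omega}$. By the very definition of $\bm{\Theta}$, this inclusion is equivalent to
\[\bm{\theta}(y)\cdot\bm{q}+t\,\bigl(\bm{a}^3(y)\cdot\bm{q}\bigr)\ge 0\quad\text{for all } y\in\overline{\omega}\text{ and all } t\in[-\varepsilon,\varepsilon].\]
Choosing $t=\pm\varepsilon$ produces the two-sided bound $\bm{\theta}(y)\cdot\bm{q}\ge\varepsilon\,|\bm{a}^3(y)\cdot\bm{q}|$ throughout $\overline{\omega}$, and evaluating it at $y_0$ then gives $\bm{a}^3(y_0)\cdot\bm{q}=0$, so $\bm{q}$ must lie in the tangent plane to the surface at $\bm{\theta}(y_0)$. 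Since $\bm{\theta}$ is an immersion, $\{\bm{a}_1(y_0),\bm{a}_2(y_0)\}$ is a basis of this plane, and I write $\bm{q}=q^\alpha\bm{a}_\alpha(y_0)$ with $(q^1,q^2)\ne(0,0)$.

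A direct computation then gives $\partial_\alpha f(y_0)=\bm{a}_\alpha(y_0)\cdot\bm{q}=a_{\alpha\beta}(y_0)\,q^\beta$. Since the first fundamental form $(a_{\alpha\beta}(y_0))$ is positive definite and $(q^\beta)\ne(0,0)$, the gradient $\nabla f(y_0)$ is a non-zero vector of $\mathbb{R}^2$. In particular, if $y_0\in\omega$, the classical first-order optimality condition at an interior minimizer of $f$ on $\overline{\omega}$ would force $\nabla f(y_0)=\bm{0}$, an immediate contradiction. Hence the minimum can only be attained on $\gamma$.

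Ruling out $y_0\in\gamma$ is where I expect the principal technical difficulty. The strategy is to lift the analysis back to three dimensions through the auxiliary function $g(y,t):=\bm{\Theta}(y,t)\cdot\bm{q}$: the previous steps show that $g\equiv 0$ along the entire normal fibre $\{y_0\}\times[-\varepsilon,\varepsilon]$, while $g\ge 0$ on $\overline{\Omega^\varepsilon}$. Taking one-sided directional derivatives of $g$ at each point $(y_0,t)$ along directions $v$ belonging to the (half-plane) tangent cone to $\overline{\omega}$ at $y_0$ produces, for every $t\in[-\varepsilon,\varepsilon]$, the non-negativity of the linear form $v\mapsto v^\alpha\bigl(\bm{a}_\alpha(y_0)\cdot\bm{q}+t\,\partial_\alpha\bm{a}^3(y_0)\cdot\bm{q}\bigr)$. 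Requiring this uniformly in $t\in[-\varepsilon,\varepsilon]$ and in $v$ over a two-dimensional half-plane of admissible directions is expected to force both $\bm{a}_\alpha(y_0)\cdot\bm{q}=0$ and $\partial_\alpha\bm{a}^3(y_0)\cdot\bm{q}=0$; the former, combined with the earlier $\bm{a}^3(y_0)\cdot\bm{q}=0$, then yields $\bm{q}=\bm{0}$ and closes the contradiction.
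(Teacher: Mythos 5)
Your reduction of the inclusion to the two-sided bound $\bm{\theta}(y)\cdot\bm{q}\ge\varepsilon\,|\bm{a}_3(y)\cdot\bm{q}|$, the deduction $\bm{a}_3(y_0)\cdot\bm{q}=0$ at a zero minimizer, and the interior-point argument via $\partial_\alpha f(y_0)=a_{\alpha\beta}(y_0)q^\beta\ne 0$ are all correct (note the paper itself gives no proof of this lemma; it is quoted from Lemma~2.1 of the cited reference). The gap is exactly where you flag it: the case $y_0\in\gamma$. The step you hope for --- that non-negativity of $v\mapsto v^\alpha\bigl(\bm{a}_\alpha(y_0)\cdot\bm{q}+t\,\partial_\alpha\bm{a}_3(y_0)\cdot\bm{q}\bigr)$ for all $t\in[-\varepsilon,\varepsilon]$ and all $v$ in the tangent cone forces both $\bm{a}_\alpha(y_0)\cdot\bm{q}=0$ and $\partial_\alpha\bm{a}_3(y_0)\cdot\bm{q}=0$ --- is false. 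What this condition yields is only $v^\alpha\bm{a}_\alpha(y_0)\cdot\bm{q}\ \ge\ \varepsilon\,\bigl|v^\alpha\partial_\alpha\bm{a}_3(y_0)\cdot\bm{q}\bigr|$ for admissible $v$, with equalities forced only along directions $v$ for which both $\pm v$ are admissible; a linear form that is non-negative on a half-plane (and the tangent cone of a Lipschitz domain may even be a smaller cone) need not vanish.

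Moreover, the obstruction is not merely technical: under the hypotheses as literally stated the boundary case cannot be excluded at all. Take $\omega$ the disk of radius $1/2$ centred at $(1/2,0)$, $\bm{\theta}(y)=\bigl(y_1,y_2,\sqrt{4-y_1^2-y_2^2}\bigr)$ (a spherical cap, hence even elliptic) and $\bm{q}=\bm{e}^1$; then $\bm{a}_3=\tfrac12\bm{\theta}$, so $\bm{\Theta}(y,x_3)\cdot\bm{q}=\bigl(1+\tfrac{x_3}{2}\bigr)y_1\ge 0$ on $\overline{\Omega^\varepsilon}$ for every $0<\varepsilon<2$, i.e.\ the inclusion $\bm{\Theta}(\overline{\Omega^\varepsilon})\subset\mathbb{H}$ holds, while $\bm{\theta}(y_0)\cdot\bm{q}=0$ at the boundary point $y_0=(0,0)$. (On a sphere $b_{\alpha\beta}$ is proportional to $a_{\alpha\beta}$, so the two first-order conditions you want to play off against each other collapse into one and produce no contradiction.) Consequently your argument can only deliver $\bm{\theta}(y)\cdot\bm{q}>0$ for interior points $y\in\omega$; to obtain $\min_{\overline{\omega}}(\bm{\theta}\cdot\bm{q})>0$ one needs an extra hypothesis such as $\min_{\overline{\omega}}(\bm{a}_3\cdot\bm{q})>0$ (precisely the condition appearing in Theorem~\ref{density}), in which case your own two-sided bound finishes the proof in one line, $\bm{\theta}\cdot\bm{q}\ge\varepsilon\min_{\overline{\omega}}(\bm{a}_3\cdot\bm{q})>0$. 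As written, the boundary step of your proposal cannot be completed.
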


\section{The scaled three-dimensional problem for a family of linearly elastic elliptic membrane shells} \label{sec3}

In section~\ref{Sec:2}, we considered an obstacle problem for ``general'' linearly elastic shells. From now on, we will restrict ourselves  to a specific class of shells, according to the following definition that was originally proposed in~\cite{Ciarlet1996} (see also \cite{Ciarlet2000}).

Consider a linearly elastic shell, subjected to the various assumptions set forth in section~\ref{Sec:2}. Such a shell is said to be a \emph{linearly elastic elliptic membrane shell} (from now on simply \emph{membrane shell}) if the following two additional assumptions are satisfied: \emph{first}, $\gamma_0 = \gamma$, i.e., the homogeneous boundary condition of place is imposed over the \emph{entire lateral face} $\gamma \times \left] - \varepsilon , \varepsilon \right[$ of the shell, and \emph{second}, its middle surface $\bm{\theta}(\overline{\omega})$ is \emph{elliptic}, according to the definition given in section \ref{sec1}.

In this paper, we consider the \emph{obstacle problem} (as defined in section~\ref{sec2}) \emph{for a family of membrane shells}, all sharing the \emph{same middle surface} and whose thickness $2 \varepsilon > 0$ is considered as a ``small'' parameter approaching zero. In order to conduct an asymptotic analysis on the three-dimensional model as the thickness $\varepsilon \to 0$, we resorted to a (by now standard) methodology first proposed in~\cite{CiaDes1979}: To begin with, we ``scale'' each problem $\mathcal{P} (\Omega^\varepsilon), \, \varepsilon > 0$, over a \emph{fixed domain} $\Omega$, using appropriate \emph{scalings on the unknowns} and \emph{assumptions on the data}. 

More specifically, let
$$
\Omega := \omega \times \left] - 1, 1 \right[ ,
$$
let $x = (x_i)$ denote a generic point in the set $\overline{\Omega}$, and let $\partial_i := \partial/ \partial x_i$. With each point $x = (x_i) \in \overline{\Omega}$, we associate  the point $x^\varepsilon = (x^\varepsilon_i)$ defined by
$$
x^\varepsilon_\alpha := x_\alpha = y_\alpha \text{ and } x^\varepsilon_3 := \varepsilon x_3,
$$
so that $\partial^\varepsilon_\alpha = \partial_\alpha$ and $\partial^\varepsilon_3 = \varepsilon^{-1} \partial_3$. To the unknown $\bm{u}^\varepsilon = (u^\varepsilon_i)$ and to the vector fields $\bm{v}^\varepsilon = (v^\varepsilon_i)$ appearing in the formulation of the problem $\mathcal{P} (\Omega^\varepsilon)$ corresponding to a membrane shell, we then associate the \emph{scaled unknown} $\bm{u} (\varepsilon) = (u_i(\varepsilon))$ and the \emph{scaled vector fields} $\bm{v} = (v_i)$ by letting
$$
u_i (\varepsilon) (x) := u^\varepsilon_i (x^\varepsilon) \quad\text{ and }\quad v_i(x) := v^\varepsilon_i (x^\varepsilon)
$$
at each $x\in \overline{\Omega}$. Finally, we \emph{assume} that there exist functions $f^i \in L^2(\Omega)$ \emph{independent of} $\varepsilon$ such that the following \emph{assumptions on the data} hold
$$
f^{i, \varepsilon} (x^\varepsilon) = f^i(x) \text{ at each } x \in \Omega.
$$

Note that the independence on $\varepsilon$ of the Lam\'{e} constants assumed in Section \ref{Sec:2} in the formulation of problem $\mathcal{P} (\Omega^\varepsilon)$ implicitly constituted another \emph{assumption on the data}.

The variational problem $\mathcal{P} (\varepsilon; \Omega)$ defined in the next theorem will constitute the point of departure of the asymptotic analysis performed in~\cite{CiaMarPie2018}.

\begin{theorem} \label{t:3}
	For each $\varepsilon > 0$, define the set
	\begin{align*}
		\bm{U}(\varepsilon;\Omega) &:= \{\bm{v} = (v_i) \in \bm{H}^1(\Omega); \bm{v} = \bm{0} \textup{ on } \gamma \times \left] -1, 1 \right[ , \\
		& \big(\bm{\theta} (y) + \varepsilon x_3 \bm{a}_3 (y) + v_i (x) \bm{g}^i(\varepsilon) (x)\big) \cdot \bm{q} \ge 0 \textup{ for a.a. } x = (y,x_3) \in \Omega  \},   
	\end{align*}
	where
	$$
	\bm{g}^i(\varepsilon ) (x) := \bm{g}^{i, \varepsilon} (x^\varepsilon) \textup{ at each } x\in \overline{\Omega}.
	$$
	
	Then the scaled unknown of the variational problem $\mathcal{P}(\Omega^\varepsilon)$ is the unique solution of the variational problem $\mathcal{P} (\varepsilon; \Omega)$: Find $\bm{u}(\varepsilon) \in \bm{U}(\varepsilon; \Omega)$ that satisfies the following variational inequalities:
	\begin{equation*}
		\int_\Omega A^{ijk\ell}(\varepsilon) e_{k\| \ell}(\varepsilon; \bm{u}(\varepsilon)) \left(e_{i\| j}(\varepsilon; \bm{v}) - e_{i\|j}(\varepsilon; \bm{u}(\varepsilon))\right) \sqrt{g(\varepsilon)} \dd x 
		\ge \int_\Omega f^i (v_i - u_i(\varepsilon)) \sqrt{g(\varepsilon)} \dd x,
	\end{equation*}
	for all $\bm{v} \in \bm{U}(\varepsilon;\Omega)$, where
	\begin{align*}
		g(\varepsilon)(x) &:= g^\varepsilon(x^\varepsilon)
		\textup{ and } A^{ijk\ell}(\varepsilon) (x) :=
		A^{ijk\ell, \varepsilon}(x^\varepsilon)
		\textup{ at each } x\in \overline{\Omega}, \\
		e_{\alpha \| \beta}(\varepsilon; \bm{v}) &:= \frac{1}{2}(\partial_\beta v_\alpha + \partial_\alpha v_\beta) - \Gamma^k_{\alpha \beta}(\varepsilon) v_k = e_{\beta \| \alpha}(\varepsilon;\bm{v}) , \\
		e_{3\| \alpha}(\varepsilon; \bm{v}) &:= \frac12 \left(\frac{1}{\varepsilon} \partial_3 v_\alpha + \partial_\alpha v_3\right) - \Gamma^\sigma_{\alpha3}(\varepsilon) v_\sigma=e_{\alpha \| 3}(\varepsilon; \bm{v}),\\
		e_{3\| 3}(\varepsilon;\bm{v}) &:= \frac{1}{\varepsilon} \partial_3 v_3,
	\end{align*}
	where
	$$
	\Gamma^p_{ij}(\varepsilon)(x) := \Gamma^{p,\varepsilon}_{ij}(x^\varepsilon) \textup{ at each } x\in \overline{\Omega}.
	$$
	\qed
\end{theorem}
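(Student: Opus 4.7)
The statement is essentially a change-of-variables exercise: I want to pull the variational inequality from Theorem~\ref{t:2}, which lives on $\Omega^\varepsilon$, back to the fixed domain $\Omega$ via the bijection $x\in\overline\Omega \mapsto x^\varepsilon=(x_1,x_2,\varepsilon x_3)\in\overline{\Omega^\varepsilon}$, and identify the resulting expression with the one stated in the theorem. Uniqueness will be a free consequence of the uniqueness in Theorem~\ref{t:2}, once I verify that the correspondence $\bm v^\varepsilon\leftrightarrow \bm v$ is a bijection between $\bm U(\Omega^\varepsilon)$ and $\bm U(\varepsilon;\Omega)$.

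The plan is as follows. \emph{First}, I check the correspondence of admissible sets. Since we are dealing with a membrane shell, $\gamma_0=\gamma$, so the boundary condition $\bm v^\varepsilon=\bm 0$ on $\gamma\times\left]-\varepsilon,\varepsilon\right[$ is equivalent to $\bm v=\bm 0$ on $\gamma\times\left]-1,1\right[$. For the confinement condition, I use $\bm\Theta(x^\varepsilon)=\bm\theta(y)+x^\varepsilon_3\bm a_3(y)=\bm\theta(y)+\varepsilon x_3\bm a_3(y)$ together with $v^\varepsilon_i(x^\varepsilon)\bm g^{i,\varepsilon}(x^\varepsilon)=v_i(x)\bm g^i(\varepsilon)(x)$, which immediately recovers the constraint in the definition of $\bm U(\varepsilon;\Omega)$. \emph{Second}, I recompute the strains via the chain rule: since $\partial^\varepsilon_\alpha=\partial_\alpha$ and $\partial^\varepsilon_3=\varepsilon^{-1}\partial_3$, and noting the identities $\Gamma^{3,\varepsilon}_{\alpha 3}=\Gamma^{p,\varepsilon}_{33}=0$ recalled in section~\ref{sec2}, the formulae
\begin{align*}
e^\varepsilon_{\alpha\|\beta}(\bm v^\varepsilon)(x^\varepsilon) &= e_{\alpha\|\beta}(\varepsilon;\bm v)(x),\\
e^\varepsilon_{3\|\alpha}(\bm v^\varepsilon)(x^\varepsilon) &= e_{3\|\alpha}(\varepsilon;\bm v)(x),\\
e^\varepsilon_{3\|3}(\bm v^\varepsilon)(x^\varepsilon) &= e_{3\|3}(\varepsilon;\bm v)(x),
\end{align*}
follow directly from the definitions in Theorem~\ref{t:2} and in the statement of Theorem~\ref{t:3}, taking into account that $\Gamma^p_{ij}(\varepsilon)(x)=\Gamma^{p,\varepsilon}_{ij}(x^\varepsilon)$ by definition. \emph{Third}, the elasticity coefficients and the volume element transform trivially as $A^{ijk\ell,\varepsilon}(x^\varepsilon)=A^{ijk\ell}(\varepsilon)(x)$ and $g^\varepsilon(x^\varepsilon)=g(\varepsilon)(x)$, while the Jacobian of the change of variables yields $\dd x^\varepsilon=\varepsilon\dd x$.

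Combining these three steps, the variational inequality from Theorem~\ref{t:2} becomes, after multiplying and dividing by $\varepsilon>0$,
\begin{equation*}
\int_\Omega A^{ijk\ell}(\varepsilon)\,e_{k\|\ell}(\varepsilon;\bm u(\varepsilon))\bigl(e_{i\|j}(\varepsilon;\bm v)-e_{i\|j}(\varepsilon;\bm u(\varepsilon))\bigr)\sqrt{g(\varepsilon)}\dd x \ge \int_\Omega f^i(v_i-u_i(\varepsilon))\sqrt{g(\varepsilon)}\dd x,
\end{equation*}
for every $\bm v\in\bm U(\varepsilon;\Omega)$, where I have also used the assumption on the data $f^{i,\varepsilon}(x^\varepsilon)=f^i(x)$. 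The common factor $\varepsilon$ produced by $\dd x^\varepsilon=\varepsilon\dd x$ cancels on both sides, which is the only arithmetic point to check. Since the scaling $\bm v^\varepsilon\leftrightarrow\bm v$ is an affine bijection preserving the Hilbert-space structure and the convex admissible set, uniqueness of $\bm u(\varepsilon)$ is inherited from the uniqueness of $\bm u^\varepsilon$ established in Theorem~\ref{t:2}.

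\emph{Where the work sits.} None of these steps is deep: the whole statement is a bookkeeping result that sets up the subsequent asymptotic analysis. The only mildly delicate point is to keep a careful track of the $\varepsilon$ exponents generated by the rescaling of $\partial^\varepsilon_3$, which is precisely what produces the anomalous factors $\varepsilon^{-1}$ appearing in $e_{3\|\alpha}(\varepsilon;\bm v)$ and $e_{3\|3}(\varepsilon;\bm v)$ and distinguishes the scaled model from its unscaled counterpart; these are the very factors that dictate the asymptotic behavior as $\varepsilon\to 0$ in the subsequent analysis.
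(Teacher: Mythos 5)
Your proposal is correct and coincides with the standard argument this theorem rests on: the paper states it without proof (it is recalled from the asymptotic analysis of Ciarlet--Mardare--Piersanti), and the omitted proof is exactly the change of variables $x\mapsto x^\varepsilon$, the identification of the scaled strains via $\partial^\varepsilon_3=\varepsilon^{-1}\partial_3$ and $\Gamma^{3,\varepsilon}_{\alpha 3}=\Gamma^{p,\varepsilon}_{33}=0$, the cancellation of the Jacobian factor $\varepsilon$, and the bijection of admissible sets transferring existence and uniqueness from Theorem~\ref{t:2}. No gaps.
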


The problem we are interested in is derived as a result of the rigorous asymptotic analysis conducted in Theorem~4.1 of~\cite{CiaMarPie2018}.

\begin{theorem}\label{t:4}
	Let $\omega$ be a domain in $\mathbb{R}^2$, let $\bm{\theta} \in \mathcal{C}^3(\overline{\omega}; \mathbb{E}^3)$ be an immersion such that the surface $\bm{\theta} (\overline{\omega})$ is elliptic \textup{(cf.\, section \ref{sec1})}. Define the space and sets
	\begin{align*}
		\bm{V}_M (\omega) &:= H^1_0 (\omega) \times H^1_0 (\omega) \times L^2(\omega), \\
		\bm{U}_M (\omega) &:= \{\bm{\eta} = (\eta_i) \in H^1_0 (\omega) \times H^1_0 (\omega) \times L^2(\omega); \big(\bm{\theta} (y) + \eta_i (y) \bm{a}^i(y)\big) \cdot \bm{q} \geq 0 \textup{ for a.a. } y \in \omega \}, \\
		\tilde{\bm{U}}_M (\omega) &:= \{\bm{\eta} = (\eta_i) \in H^1_0 (\omega) \times H^1_0 (\omega) \times H^1_0(\omega); \big(\bm{\theta} (y) + \eta_i (y) \bm{a}^i(y)\big) \cdot \bm{q} \geq 0 \textup{ for a.a. } y \in \omega \}, 
	\end{align*}
	and assume that the immersion $\bm{\theta}$ is such that
	$$
	\tilde{d}:=\min_{y \in \overline{\omega}}(\bm{\theta}(y)\cdot\bm{q})>0,
	$$
	is independent of $\varepsilon$, and assume that  the following ``density property'' holds:
	$$
	\tilde{\bm{U}}_M (\omega) \textup{ is dense in } \bm{U}_M(\omega) \textup{ with respect to the norm of } \left\| \cdot \right\|_{H^1 (\omega) \times H^1(\omega) \times L^2(\omega)}.
	$$
	
	Let there be given a family of membrane shells with the same middle surface $\bm{\theta} (\overline{\omega})$ and thickness $2 \varepsilon > 0$, and let
	\begin{align*}
		\bm{u}(\varepsilon) &= (u_i(\varepsilon)) \in \bm{U} ( \varepsilon; \Omega) := \{ \bm{v} = (v_i) \in \bm{H}^1(\Omega) ; \; \bm{v} = \bm{0} \textup{ on } \gamma \times \left] -1, 1 \right[ , \\
		& \big(\bm{\theta} (y) + \varepsilon x_3 \bm{a}_3 (y) + v_i (x) \bm{g}^i(\varepsilon) (x)\big) \cdot \bm{q} \geq 0 \textup{ for a.a. } x = (y, x_3 ) \in \Omega  \}
	\end{align*}
	denote for each $\varepsilon > 0$ the unique solution of the corresponding problem $\mathcal{P} (\varepsilon; \Omega)$ introduced in Theorem~\ref{t:3}.
	
	Then there exist functions $u_\alpha \in H^1(\Omega)$ independent of the variable $x_3$ and satisfying
	$$
	u_\alpha = 0 \textup{ on } \gamma \times \left]-1, 1\right[,
	$$
	and there exists a function $u_3 \in L^2(\Omega)$ independent of the variable $x_3$, such that
	$$
	u_\alpha (\varepsilon ) \to u_\alpha \textup{ in } H^1(\Omega) \textup{ and } u_3 (\varepsilon) \to u_3 \textup{ in } L^2(\Omega).
	$$
	
	Define the average
	$$
	\overline{\bm{u}} = (\overline{u}_i) := \frac12 \int^1_{-1} \bm{u} \dd x_3 \in \bm{V}_M(\omega).
	$$
	Then
	$$
	\overline{\bm{u}} = \bm{\zeta},
	$$
	where $\bm{\zeta}$ is the unique solution of the two-dimensional variational problem $\mathcal{P}_M (\omega)$: Find $\bm{\zeta} \in \bm{U}_M(\omega)$ that satisfies the following variational inequalities
	$$
	\int_\omega a^{\alpha \beta \sigma \tau} \gamma_{\sigma \tau}(\bm{\zeta}) \gamma_{\alpha \beta} (\bm{\eta} - \bm{\zeta}) \sqrt{a} \dd y \geq \int_\omega p^i (\eta_i - \zeta_i) \sqrt{a} \dd y \quad\textup{ for all } \bm{\eta} = (\eta_i) \in \bm{U}_M(\omega),
	$$
	where
	$$
	a^{\alpha \beta \sigma \tau} := \frac{4\lambda \mu}{\lambda + 2 \mu} a^{\alpha \beta} a^{\sigma \tau} + 2\mu \left(a^{\alpha \sigma} a^{\beta \tau} + a^{\alpha \tau} a^{\beta \sigma}\right) \textup{ and } p^i := \int^1_{-1} f^i \dd x_3.
	$$
	\qed
\end{theorem}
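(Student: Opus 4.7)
The plan is to follow the classical scaling–convergence methodology of Ciarlet and Destuynder, adapted to the variational-inequality setting. First, I would derive uniform (in $\varepsilon$) a priori estimates on $\bm{u}(\varepsilon)$. Since the reference configuration satisfies the confinement condition (see Lemma~\ref{lem0}), the zero vector field belongs to $\bm{U}(\varepsilon;\Omega)$, so testing the inequality of Theorem~\ref{t:3} with $\bm{v}=\bm{0}$ yields
\begin{equation*}
\int_\Omega A^{ijk\ell}(\varepsilon) e_{k\|\ell}(\varepsilon;\bm{u}(\varepsilon)) e_{i\|j}(\varepsilon;\bm{u}(\varepsilon))\sqrt{g(\varepsilon)}\dd x \le \int_\Omega f^i u_i(\varepsilon)\sqrt{g(\varepsilon)}\dd x.
\end{equation*}
Combining the uniform ellipticity of $A^{ijk\ell}(\varepsilon)$ (which is standard and holds for $\varepsilon$ small) with the scaled three-dimensional Korn inequality on an elliptic-membrane geometry (a consequence of Theorem~\ref{korn}, essentially by integrating in $x_3$ and exploiting the boundary condition on $\gamma\times]{-1},1[$) gives a bound of the form $\sum_\alpha\|u_\alpha(\varepsilon)\|_{H^1(\Omega)}+\|u_3(\varepsilon)\|_{L^2(\Omega)}\le C$, together with bounds on the scaled strains $\|e_{i\|j}(\varepsilon;\bm{u}(\varepsilon))\|_{L^2(\Omega)}\le C$.

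Next, I would extract a weakly convergent subsequence $\bm{u}(\varepsilon)\rightharpoonup\bm{u}$ in the appropriate product space. The bounds on $e_{3\|\alpha}(\varepsilon;\cdot)$ and $e_{3\|3}(\varepsilon;\cdot)$ force $\partial_3 u_\alpha=0$ and $\partial_3 u_3=0$, so the limit $\bm{u}$ is independent of $x_3$ and may be identified with its average $\overline{\bm{u}}\in\bm{V}_M(\omega)$. A pointwise a.e.\ passage to the limit in the obstacle inequality $(\bm{\theta}(y)+\varepsilon x_3\bm{a}_3(y)+u_i(\varepsilon)\bm{g}^i(\varepsilon))\cdot\bm{q}\ge 0$ shows that $\overline{\bm{u}}\in\bm{U}_M(\omega)$.

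The crucial and most delicate step is passing to the limit in the variational inequality. The obstacle prevents us from using the subtraction trick that works for linear equations, so we must construct, for every test field $\bm{\eta}\in\tilde{\bm{U}}_M(\omega)$, an explicit family $\bm{v}(\varepsilon)\in\bm{U}(\varepsilon;\Omega)$ converging strongly in $\bm{H}^1(\Omega)$ to the $x_3$-independent extension of $\bm{\eta}$. It is precisely here that the hypothesis $\tilde{\bm{U}}_M(\omega)\subset\bm{U}_M(\omega)$, together with the assumed density, is indispensable: the three-dimensional admissibility $(\bm{\Theta}+v_i(\varepsilon)\bm{g}^i(\varepsilon))\cdot\bm{q}\ge 0$ requires a genuinely three-dimensional test field, and the $H^1_0$-regularity of the transverse component (guaranteed in $\tilde{\bm{U}}_M(\omega)$ but not in $\bm{U}_M(\omega)$) is what allows the $x_3$-extension to belong to $\bm{H}^1(\Omega)$ and to satisfy the boundary condition on $\gamma\times]{-1},1[$. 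Using $\bm{v}(\varepsilon)$ as test field, lower-semicontinuity of the quadratic form on the $\bm{u}(\varepsilon)$-side and strong convergence of $\bm{v}(\varepsilon)$-terms on the other side give, after sending $\varepsilon\to 0$, the two-dimensional variational inequality of $\mathcal{P}_M(\omega)$ for every $\bm{\eta}\in\tilde{\bm{U}}_M(\omega)$. The density of $\tilde{\bm{U}}_M(\omega)$ in $\bm{U}_M(\omega)$ then extends the inequality to every $\bm{\eta}\in\bm{U}_M(\omega)$, so $\overline{\bm{u}}=\bm{\zeta}$ by uniqueness of the solution to $\mathcal{P}_M(\omega)$.

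Finally, strong convergence $u_\alpha(\varepsilon)\to u_\alpha$ in $H^1(\Omega)$ and $u_3(\varepsilon)\to u_3$ in $L^2(\Omega)$ follows by the usual trick: bound $\int_\Omega A^{ijk\ell}(\varepsilon)(e_{k\|\ell}(\varepsilon;\bm{u}(\varepsilon))-e_{k\|\ell}^\ast)(e_{i\|j}(\varepsilon;\bm{u}(\varepsilon))-e_{i\|j}^\ast)\sqrt{g(\varepsilon)}\dd x$ from above by quantities that vanish in the limit (using again the inequality with an admissible $\bm{v}(\varepsilon)$ converging to $\overline{\bm{u}}$), where $e_{i\|j}^\ast$ denote the formal limits of the scaled strains; combined with ellipticity this forces convergence of the strain norms, and then Korn's inequality upgrades weak to strong convergence. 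Uniqueness of the limit kills the subsequence extraction. The main obstacle throughout is the construction of the admissible family $\bm{v}(\varepsilon)$, which could not be carried out without the density property; this is exactly the conceptual innovation that makes the asymptotic analysis possible in the obstacle case.
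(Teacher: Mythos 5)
Your proposal follows essentially the same route as the proof this paper relies on: the theorem is not reproved here but simply recalled from Theorem~4.1 of the cited work of Ciarlet, Mardare and Piersanti, whose argument is exactly the scaled Ciarlet--Destuynder asymptotic analysis you outline (a priori bounds from testing with $\bm{v}=\bm{0}$ and a Korn-type inequality for families of elliptic membrane shells, extraction of $x_3$-independent limits, passage to the limit in the variational inequality via admissible test fields built from $\tilde{\bm{U}}_M(\omega)$ using the density property and $\tilde{d}>0$, then the standard strong-convergence upgrade). Your sketch correctly identifies the construction of the admissible family $\bm{v}(\varepsilon)$ and the density property as the decisive new ingredients in the obstacle case, which is precisely the role they play in the cited proof.
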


Note that it does not make sense to talk about the trace of $\zeta_3$ along $\gamma$, since $\zeta_3$ is \emph{a priori} only of class $L^2(\omega)$.
The loss of the homogeneous boundary condition for the transverse component of the limit model, which is \emph{a priori} only square integrable, is \emph{compensated} by the appearance of a boundary layer for the transverse component.
By proving that the solution enjoys a higher regularity, we will establish that it is possible to \emph{restore} the boundary condition for the transverse component of the solution too, and that the trace of the transverse component of the solution along the boundary is almost everywhere (in the sense of the measure of the contour) equal to zero.

Critical to establish the convergence of the family $\{\bm{u}(\varepsilon)\}_{\varepsilon > 0}$ is the ``density property" assumed there, which asserts that \emph{the set $\tilde{\bm{U}}_M (\omega)$ is dense in the set $\bm{U}_M(\omega)$ with respect to the norm $\left\| \cdot \right\|_{H^1(\omega) \times H^1(\omega) \times L^2(\omega)}$}. The same ``density property" is used to provide a justification, via a rigorous asymptotic analysis, of Koiter's model for membrane shells subject to an obstacle (cf. \cite{CiaPie2018bCR}, \cite{CiaPie2018b}).
We hereby recall a sufficient \emph{geometric} condition ensuring the  assumed ``density property'' (cf. Theorem~5.1 of~\cite{CiaMarPie2018}).

\begin{theorem}
	\label{density}
	Let $\bm{\theta} \in \mathcal{C}^2(\overline{\omega}; \mathbb{E}^3)$ be an immersion with the following property: There exists a non-zero vector $\bm{q} \in \mathbb{E}^3$ such that
	\begin{equation*}
	\min_{y \in \overline{\omega}} (\bm{\theta} (y) \cdot \bm{q}) > 0
	\textup{ and }
	\min_{y \in \overline{\omega}} (\bm{a}_3 (y) \cdot \bm{q}) > 0.
	\end{equation*}
	
	Define the sets
	\begin{align*}
		\bm{U}_M (\omega) := \{\bm{\eta} = &(\eta_i) \in H^1_0 (\omega) \times H^1_0 (\omega) \times L^2(\omega); \big(\bm{\theta} (y) + \eta_i (y) \bm{a}^i(y)\big) \cdot \bm{q} \geq 0 \textup{ for a.a. } y \in \omega \}, \\
		\bm{U}_M(\omega) \cap \boldsymbol{\mathcal{D}} (\omega) := \{ \bm{\eta} =& (\eta_i ) \in \mathcal{D} (\omega) \times \mathcal{D} (\omega) \times \mathcal{D} (\omega); \big(\bm{\theta} (y) + \eta_i (y) \bm{a}^i(y)\big) \cdot \bm{q} \geq 0   \textup{ for a.a. } y \in \omega \}.
	\end{align*}
	Then the set $\bm{U}_M (\omega) \cap \boldsymbol{\mathcal{D}} (\omega)$ is dense in the set $\bm{U}_M(\omega)$ with respect to the norm $\left\| \cdot \right\|_{H^1(\omega) \times H^1(\omega) \times L^2(\omega)}$.
	\qed
\end{theorem}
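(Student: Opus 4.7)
My plan rests on rewriting the constraint in a scalar, affine form for the transverse component. Since $\bm{a}_3\cdot\bm{q}>0$ on $\overline{\omega}$, the pointwise condition $(\bm{\theta}+\eta_i\bm{a}^i)\cdot\bm{q}\ge 0$ is equivalent to $\eta_3\ge g$, where $g(y):=-(\bm{\theta}(y)\cdot\bm{q}+\eta_\alpha(y)(\bm{a}^\alpha(y)\cdot\bm{q}))/(\bm{a}_3(y)\cdot\bm{q})$. The crucial observation is that wherever the tangential components vanish, $g$ reduces to $-(\bm{\theta}\cdot\bm{q})/(\bm{a}_3\cdot\bm{q})<0$ by the two standing hypotheses. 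Consequently, the transverse component can be set to zero near $\partial\omega$ without violating the constraint --- exactly what is needed to accommodate density in spite of the low regularity $\eta_3\in L^2(\omega)$.

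The construction will proceed in three steps. First, using the Meyers--Serrin theorem, I would choose $\eta_\alpha^{(k)}\in\mathcal{D}(\omega)$ with $\eta_\alpha^{(k)}\to\eta_\alpha$ in $H^1(\omega)$, and let $g^{(k)}\in\mathcal{C}^1(\overline{\omega})$ denote the obstacle formed with $\eta_\alpha^{(k)}$ in place of $\eta_\alpha$; then $g^{(k)}\to g$ in $L^2(\omega)$ because the multiplying coefficients are uniformly bounded on $\overline{\omega}$. Second, I would pass to the slack variable $\zeta:=\eta_3-g$, which lies in $L^2(\omega)$ and is non-negative almost everywhere (the two-dimensional Sobolev embedding $H^1_0(\omega)\hookrightarrow L^p(\omega)$ for every $p<\infty$ ensures $g\in L^2$). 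Since truncation by a non-negative cutoff and convolution with a non-negative mollifier both preserve non-negativity, $\zeta$ is the $L^2$-limit of a sequence $\zeta^{(k)}\in\mathcal{D}(\omega)$ with $\zeta^{(k)}\ge 0$. Third, I would promote $g^{(k)}$ to a smooth upper envelope $\tilde{g}^{(k)}\in\mathcal{C}^\infty(\overline{\omega})$ satisfying $\tilde{g}^{(k)}\ge g^{(k)}$ and $\tilde{g}^{(k)}-g^{(k)}\to 0$ in $L^\infty(\overline{\omega})$, obtained by a mollification of size $\delta_k\to 0$ (after a $\mathcal{C}^1$-extension of $g^{(k)}$) plus the additive shift $\|\nabla g^{(k)}\|_{L^\infty}\delta_k$.

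To assemble the admissible approximation, I would pick a cutoff $\phi_k\in\mathcal{D}(\omega)$ with $0\le\phi_k\le 1$, equal to $1$ on a neighborhood of $\operatorname{supp}\eta_1^{(k)}\cup\operatorname{supp}\eta_2^{(k)}$ and with $\phi_k\to 1$ pointwise on $\omega$, and then set $\eta_3^{(k)}:=\zeta^{(k)}+\phi_k\tilde{g}^{(k)}\in\mathcal{D}(\omega)$. Feasibility $\eta_3^{(k)}\ge g^{(k)}$ splits into two cases: on $\{\phi_k=1\}$ one has $\eta_3^{(k)}-g^{(k)}=\zeta^{(k)}+(\tilde{g}^{(k)}-g^{(k)})\ge 0$; on $\{\phi_k<1\}$, the choice of $\phi_k$ forces $\eta_\alpha^{(k)}\equiv 0$ and hence $g^{(k)}<0$, so that $\eta_3^{(k)}-g^{(k)}=\zeta^{(k)}+\phi_k(\tilde{g}^{(k)}-g^{(k)})+(\phi_k-1)g^{(k)}$ is a sum of non-negative quantities. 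Convergence $\eta_3^{(k)}\to\eta_3$ in $L^2(\omega)$ then follows from $\zeta^{(k)}\to\zeta$ combined with $\phi_k\tilde{g}^{(k)}\to g$ in $L^2(\omega)$ via dominated convergence. The hardest part is conceptual rather than technical: direct mollification of $\eta_3\in L^2$ fails because the pointwise obstacle is not respected by $L^2$-mollification and the mollified function admits no uniform $L^\infty$ bound as the parameter vanishes. The slack-variable detour, combined with the two hypotheses --- which force the obstacle to be \emph{strictly negative} precisely in the transition annulus of the cutoff --- is what rescues the argument.
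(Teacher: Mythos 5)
Your argument is correct: reducing the constraint to the scalar obstacle $\eta_3\ge g$ (legitimate because $\bm{a}^3=\bm{a}_3$ and $\bm{a}_3\cdot\bm{q}$ is bounded below by a positive constant on $\overline{\omega}$), passing to the nonnegative slack variable, and cutting off with $\phi_k$ while exploiting that $g^{(k)}\le -\min_{\overline{\omega}}(\bm{\theta}\cdot\bm{q})/\max_{\overline{\omega}}(\bm{a}_3\cdot\bm{q})<0$ wherever the tangential approximants vanish, uses both hypotheses exactly as intended, and the feasibility case analysis and the $L^2$ convergence $\zeta^{(k)}+\phi_k\tilde g^{(k)}\to\eta_3$ go through. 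Note that the present paper does not reprove this statement but recalls it from Ciarlet--Mardare--Piersanti, where the proof rests on the same mechanism (positivity of $\bm{a}_3\cdot\bm{q}$ to solve for the transverse component, strict negativity of the obstacle near $\gamma$ where the tangential components vanish), so your route is essentially the expected one; the only blemishes are cosmetic, e.g.\ density of $\mathcal{D}(\omega)$ in $H^1_0(\omega)$ is definitional rather than a consequence of Meyers--Serrin, and $g\in L^2(\omega)$ needs no Sobolev embedding.
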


Examples of membrane shells satisfying the ``density property'' thus include those whose middle surface is a portion of an ellipsoid that is strictly contained in one of the open half-spaces that contain two of its main axes, the boundary of the half-space coinciding with the obstacle in this case.

As a final step, we \emph{de-scale} Problem $\mathcal{P}_M(\omega)$ and we obtain the following variational formulation (cf. Theorem~4.2 of~\cite{CiaMarPie2018}).

\begin{customprob}{$\mathcal{P}_M^\varepsilon(\omega)$}
	\label{problem1}
	Find $\bm{\zeta}^\varepsilon=(\zeta_i^\varepsilon) \in \bm{U}_M(\omega)$ satisfying the following variational inequalities:
	\begin{equation*}
		\varepsilon \int_\omega a^{\alpha \beta \sigma \tau} \gamma_{\sigma \tau}(\bm{\zeta}^\varepsilon) \gamma_{\alpha \beta} (\bm{\eta} - \bm{\zeta}^\varepsilon) \sqrt{a} \dd y \geq \int_\omega p^{i,\varepsilon} (\eta_i - \zeta_i^\varepsilon) \sqrt{a} \dd y,
	\end{equation*}
	for all $\bm{\eta} = (\eta_i) \in \bm{U}_M(\omega)$, where $p^{i,\varepsilon}:=\varepsilon\int_{-1}^{1} f^i \dd x_3$.
	\bqed
\end{customprob}

By virtue of the Korn inequality recalled in Theorem~\ref{korn}, it results that Problem~\ref{problem1} admits a unique solution. Solving Problem~\ref{problem1} amounts to minimizing the energy functional $J^\varepsilon: H^1(\omega) \times H^1(\omega) \times L^2(\omega) \to \mathbb{R}$, which is defined by
\begin{equation*}
\label{Jeps}
J^\varepsilon(\bm{\eta}):=\dfrac{\varepsilon}{2} \int_{\omega} a^{\alpha\beta\sigma\tau} \gamma_{\sigma\tau}(\bm{\eta}) \gamma_{\alpha\beta}(\bm{\eta})\sqrt{a} \dd y-\int_{\omega} p^{i,\varepsilon} \eta_i \sqrt{a} \dd y,
\end{equation*}
along all the test functions $\bm{\eta}=(\eta_i) \in \bm{U}_M(\omega)$.

\section{Approximation of the solution of Problem~$\mathcal{P}_M^\varepsilon(\omega)$ by penalization}
\label{sec:penalty}

Following~\cite{Scholz1984}, we first approximate the solution of Problem~\ref{problem1} by penalty method. By so doing, the geometrical constraint appearing in the definition of the set $\bm{U}_M(\omega)$ the deformation must obey now appears in the governing model in the form of a monotone term. As a consequence of this, the test vector fields are no longer sought in a non-empty, closed and convex subset of $\bm{V}_M(\omega)$, but in the whole $\bm{V}_M(\omega)$, and the variational inequalities are replaced by a set of nonlinear equations, where the nonlinearity is monotone.

More precisely, define the operator $\bm{\beta}:\bm{L^2}(\omega) \to\bm{L}^2(\omega)$ in the following fashion
\begin{equation*}
\bm{\beta}(\bm{\xi}):=\left(-\{(\bm{\theta}+\xi_j \bm{a}^j)\cdot\bm{q}\}^{-}\left(\dfrac{\bm{a}^i\cdot\bm{q}}{\sqrt{\sum_{\ell=1}^{3}|\bm{a}^\ell \cdot\bm{q}|^2}}\right)\right)_{i=1}^3,\quad\textup{ for all }\bm{\xi}=(\xi_i) \in \bm{L}^2(\omega),
\end{equation*}
and we notice that this operator is associated with a penalization proportional to the extent the constraint is broken. Note that the denominator never vanishes and that this fact is independent of the assumption $\min_{y \in \overline{\omega}}(\bm{a}^3\cdot\bm{q})>0$.
Following the ideas of~\cite{PWDT3D} (see also~\cite{Pie2023,PT2023}), we show that the operator $\bm{\beta}$ is monotone, bounded and non-expansive.

\begin{lemma}
\label{lem:beta}
Let $\bm{q} \in \mathbb{E}^3$ be a given unit-norm vector. Assume that $\min_{y \in \overline{\omega}}(\bm{a}^3(y)\cdot\bm{q})>0$.
Then, the operator $\bm{\beta}:\bm{L^2}(\omega) \to\bm{L}^2(\omega)$ defined by 
\begin{equation*}
\bm{\beta}(\bm{\xi}):=\left(-\{(\bm{\theta}+\xi_j \bm{a}^j)\cdot\bm{q}\}^{-}\left(\dfrac{\bm{a}^i\cdot\bm{q}}{\sqrt{\sum_{\ell=1}^{3}|\bm{a}^\ell \cdot\bm{q}|^2}}\right)\right)_{i=1}^3,\quad\textup{ for all }\bm{\xi}=(\xi_i) \in \bm{L}^2(\omega),
\end{equation*}
is bounded, monotone and Lipschitz continuous with Lipschitz constant $L=1$.
\end{lemma}
\begin{proof}
Let $\bm{\xi}$ and $\bm{\eta}$ be arbitrarily given in $\bm{L}^2(\omega)$. Evaluating
\begin{align*}
&\int_{\omega} (\bm{\beta}(\bm{\xi})-\bm{\beta}(\bm{\eta}))\cdot(\bm{\xi}-\bm{\eta})\dd y
=\int_{\omega} \left(\left[-\{(\bm{\theta}+\xi_j\bm{a}^j)\cdot\bm{q}\}^{-}\right] - \left[-\{(\bm{\theta}+\eta_j\bm{a}^j)\cdot\bm{q}\}^{-}\right]\right) \left(\dfrac{(\xi_i-\eta_i)\bm{a}^i\cdot\bm{q}}{\sqrt{\sum_{\ell=1}^{3}|\bm{a}^\ell \cdot\bm{q}|^2}}\right) \dd y\\
&=\int_{\omega}\dfrac{\left|-\{(\bm{\theta}+\xi_j\bm{a}^j)\cdot\bm{q}\}^{-}\right|^2}{\sqrt{\sum_{\ell=1}^{3}|\bm{a}^\ell \cdot\bm{q}|^2}} \dd y +\int_{\omega}\dfrac{\left|-\{(\bm{\theta}+\eta_j\bm{a}^j)\cdot\bm{q}\}^{-}\right|^2}{\sqrt{\sum_{\ell=1}^{3}|\bm{a}^\ell \cdot\bm{q}|^2}} \dd y\\
&\quad+\int_{\omega}\dfrac{\left(-\{(\bm{\theta}+\xi_j\bm{a}^j)\cdot\bm{q}\}^{-}\right)}{\sqrt{\sum_{\ell=1}^{3}|\bm{a}^\ell \cdot\bm{q}|^2}}  \left(-\{(\bm{\theta}+\eta_i\bm{a}^i)\cdot\bm{q}\}^{+}+\{(\bm{\theta}+\eta_i\bm{a}^i)\cdot\bm{q}\}^{-}\right)\dd y\\
&\quad+\int_{\omega}\dfrac{\left(-\{(\bm{\theta}+\eta_j\bm{a}^j)\cdot\bm{q}\}^{-}\right)}{\sqrt{\sum_{\ell=1}^{3}|\bm{a}^\ell \cdot\bm{q}|^2}}  \left(-\{(\bm{\theta}+\xi_i\bm{a}^i)\cdot\bm{q}\}^{+}+\{(\bm{\theta}+\xi_i\bm{a}^i)\cdot\bm{q}\}^{-}\right)\dd y\\
&\ge \int_{\omega}\dfrac{\left|-\{(\bm{\theta}+\xi_j\bm{a}^j)\cdot\bm{q}\}^{-}\right|^2}{\sqrt{\sum_{\ell=1}^{3}|\bm{a}^\ell \cdot\bm{q}|^2}} \dd y 
+\int_{\omega}\dfrac{\left|-\{(\bm{\theta}+\eta_j\bm{a}^j)\cdot\bm{q}\}^{-}\right|^2}{\sqrt{\sum_{\ell=1}^{3}|\bm{a}^\ell \cdot\bm{q}|^2}} \dd y
+\int_{\omega}\dfrac{\left(-\{(\bm{\theta}+\xi_j\bm{a}^j)\cdot\bm{q}\}^{-}\right)}{\sqrt{\sum_{\ell=1}^{3}|\bm{a}^\ell \cdot\bm{q}|^2}}  \left(\{(\bm{\theta}+\eta_i\bm{a}^i)\cdot\bm{q}\}^{-}\right)\dd y\\
&\quad+\int_{\omega}\dfrac{\left(-\{(\bm{\theta}+\eta_j\bm{a}^j)\cdot\bm{q}\}^{-}\right)}{\sqrt{\sum_{\ell=1}^{3}|\bm{a}^\ell \cdot\bm{q}|^2}}  \left(\{(\bm{\theta}+\xi_i\bm{a}^i)\cdot\bm{q}\}^{-}\right)\dd y\\
&=\int_{\omega}\dfrac{\left|\left(-\{(\bm{\theta}+\eta_j\bm{a}^j)\cdot\bm{q}\}^{-}\right) - \left(-\{(\bm{\theta}+\xi_j\bm{a}^j)\cdot\bm{q}\}^{-}\right)\right|^2}{\sqrt{\sum_{\ell=1}^{3}|\bm{a}^\ell \cdot\bm{q}|^2}}\dd y\ge 0,
\end{align*}
proves the monotonicity of the operator $\bm{\beta}$.

For showing the boundedness of the operator $\bm{\beta}$, we show that it maps bounded sets of $\bm{L}^2(\omega)$ into bounded sets of $\bm{L}^2(\omega)$.
Let the set $\mathscr{F} \subset \bm{L}^2(\omega)$ be bounded. For each $\bm{\xi} \in \mathscr{F}$, we have that
\begin{align*}
&\|\bm{\beta}(\bm{\xi})\|_{\bm{L}^2(\omega)}=\left(\int_{\omega}\dfrac{|-\{(\bm{\theta}+\xi_j\bm{a}^j)\cdot\bm{q}\}^{-}|^2}{\sum_{\ell=1}^3|\bm{a}^\ell \cdot\bm{q}|^2}\sum_{i=1}^3|\bm{a}^i\cdot\bm{q}|^2 \dd y\right)^{1/2}\\
&= \|-\{(\bm{\theta}+\xi_j\bm{a}^j)\cdot\bm{q}\}^{-}\|_{L^2(\omega)} \le \|\bm{\theta}\cdot\bm{q}\|_{L^2(\omega)}+\|\bm{\xi}\|_{\bm{L}^2(\omega)},
\end{align*}
and the sought boundedness is thus asserted, being $\bm{\theta} \in \mathcal{C}^3(\overline{\omega};\mathbb{E}^3)$ and $\mathscr{F}$ bounded in $\bm{L}^2(\omega)$.

Finally, to establish the Lipschitz continuity, for all $\bm{\xi}$ and $\bm{\eta} \in \bm{L}^2(\omega)$, we evaluate $\|\bm{\beta}(\bm{\xi})-\bm{\beta}(\bm{\eta})\|_{\bm{L}^2(\omega)}$. We have that
\begin{equation*}
\begin{aligned}
&\|\bm{\beta}(\bm{\xi})-\bm{\beta}(\bm{\eta})\|_{\bm{L}^2(\omega)}=\left(\int_{\omega}\dfrac{1}{\sum_{\ell=1}^{3}|\bm{a}^\ell \cdot\bm{q}|^2}\left\{\left|[-\{(\bm{\theta}+\xi_i\bm{a}^i)\cdot\bm{q}\}^{-}] - [-\{(\bm{\theta}+\eta_j\bm{a}^j)\cdot\bm{q}\}^{-}] \right|^2 \left(\sum_{\ell=1}^{3}|\bm{a}^\ell \cdot\bm{q}|^2\right) \right\}\dd y\right)^{1/2}\\
&=\left(\int_{\omega}\left|\left[-\{(\bm{\theta}+\xi_j\bm{a}^j)\cdot\bm{q}\}^{-}\right] - \left[-\{(\bm{\theta}+\eta_j\bm{a}^j)\cdot\bm{q}\}^{-}\right]\right|^2 \dd y\right)^{1/2}\\
&=\left(\int_{\omega}\left|\dfrac{|(\bm{\theta}+\xi_j\bm{a}^j)\cdot\bm{q}|-(\bm{\theta}+\xi_j\bm{a}^j)\cdot\bm{q}}{2} - \dfrac{|(\bm{\theta}+\eta_j\bm{a}^j)\cdot\bm{q}|-(\bm{\theta}+\eta_j\bm{a}^j)\cdot\bm{q}}{2}\right|^2 \dd y\right)^{1/2}\\
&=\left(\int_{\omega}\left|\dfrac{|(\bm{\theta}+\xi_j\bm{a}^j)\cdot\bm{q}|-|(\bm{\theta}+\eta_j\bm{a}^j)\cdot\bm{q}|}{2} - \dfrac{(\bm{\theta}+\xi_j\bm{a}^j)\cdot\bm{q}-(\bm{\theta}+\eta_j\bm{a}^j)\cdot\bm{q}}{2}\right|^2 \dd y\right)^{1/2}\\
&\le\left(\int_{\omega}\left|\left|\dfrac{(\bm{\theta}+\xi_j\bm{a}^j)\cdot\bm{q}-(\bm{\theta}+\eta_j\bm{a}^j)\cdot\bm{q}}{2}\right| - \dfrac{(\bm{\theta}+\xi_j\bm{a}^j)\cdot\bm{q}-(\bm{\theta}+\eta_j\bm{a}^j)\cdot\bm{q}}{2}\right|^2 \dd y\right)^{1/2}\\
&\le\left(\int_{\omega}\left(\left|\dfrac{(\bm{\theta}+\xi_j\bm{a}^j)\cdot\bm{q}-(\bm{\theta}+\eta_j\bm{a}^j)\cdot\bm{q}}{2}\right| + \left|\dfrac{(\bm{\theta}+\xi_j\bm{a}^j)\cdot\bm{q}-(\bm{\theta}+\eta_j\bm{a}^j)\cdot\bm{q}}{2}\right|\right)^2 \dd y\right)^{1/2}\\
&\le \left\|(\bm{\theta}+\xi_j\bm{a}^j)\cdot\bm{q}-(\bm{\theta}+\eta_j\bm{a}^j)\cdot\bm{q}\right\|_{L^2(\omega)}\le \|\bm{\xi}-\bm{\eta}\|_{\bm{L}^2(\omega)},
\end{aligned}
\end{equation*}
and the sought Lipschitz continuity is thus established. Note in passing that the Lipschitz constant is $L=1$. This completes the proof.
\end{proof}

Let $\kappa>0$ denote a penalty parameter which is meant to approach zero. The penalized version of Problem~\ref{problem1} is formulated as follows.

\begin{customprob}{$\mathcal{P}_{M,\kappa}^\varepsilon(\omega)$}
	\label{problem2}
	Find $\bm{\zeta}^\varepsilon_\kappa=(\zeta^\varepsilon_{\kappa,i}) \in \bm{V}_M(\omega)$ satisfying the following variational equations:
	\begin{equation*}
	\varepsilon \int_\omega a^{\alpha \beta \sigma \tau} \gamma_{\sigma \tau}(\bm{\zeta}^\varepsilon_\kappa) \gamma_{\alpha \beta} (\bm{\eta}) \sqrt{a} \dd y 
	+\dfrac{\varepsilon}{\kappa}\int_{\omega} \bm{\beta}(\bm{\zeta}^\varepsilon_\kappa) \cdot \bm{\eta} \dd y
	= \int_\omega p^{i,\varepsilon} \eta_i \sqrt{a} \dd y,
	\end{equation*}
	for all $\bm{\eta} = (\eta_i) \in \bm{V}_M(\omega)$.
	\bqed
\end{customprob}

The existence and uniqueness of solutions of Problem~\ref{problem2} can be established by resorting to the Minty-Browder theorem (cf., e.g., Theorem~9.14-1 of~\cite{PGCLNFAA}). For the sake of completeness, we present the proof of this existence and uniqueness result.

\begin{theorem}
\label{ex-un-kappa}
Let $\bm{q} \in\mathbb{E}^3$ be a given unit-norm vector. Assume that $\bm{\theta} \in \mathcal{C}^3(\overline{\omega};\mathbb{E}^3)$ is such that $\min_{y \in \overline{\omega}}(\bm{\theta}(y)\cdot\bm{q})>~0$.

Then, for each $\kappa>0$ and $\varepsilon>0$, Problem~\ref{problem2} admits a unique solution. Moreover, the family of solutions $\{\bm{\zeta}^\varepsilon_\kappa\}_{\kappa>0}$ is bounded in $\bm{V}_M(\omega)$ independently of $\kappa$ and $\varepsilon$, and 
$$
\bm{\zeta}^\varepsilon_\kappa \to \bm{\zeta}^\varepsilon,\quad\textup{ in }\bm{V}_M(\omega) \textup{ as }\kappa \to 0^+,
$$
where $\bm{\zeta}^\varepsilon$ is the solution of Problem~\ref{problem1}.
\end{theorem}
\begin{proof}
Let us define the operator $\bm{A}^\varepsilon:\bm{V}_M(\omega) \to \bm{V}'_M(\omega)$ by
\begin{equation*}
\langle \bm{A}^\varepsilon \bm{\xi},\bm{\eta}\rangle_{\bm{V}'_M(\omega), \bm{V}_M(\omega)}:=\varepsilon\int_{\omega} a^{\alpha\beta\sigma\tau} \gamma_{\sigma\tau}(\bm{\xi}) \gamma_{\alpha\beta}(\bm{\eta}) \sqrt{a} \dd y.
\end{equation*}

We observe that the operator $\bm{A}^\varepsilon$ is linear, continuous and, thanks to Korn's inequality (Theorem~\ref{korn}), such that
\begin{equation}
\label{Aeps}
\langle \bm{A}^\varepsilon \bm{\xi} -\bm{A}^\varepsilon\bm{\eta},\bm{\xi}-\bm{\eta}\rangle_{\bm{V}'_M(\omega), \bm{V}_M(\omega)} \ge \varepsilon c \|\bm{\xi}-\bm{\eta}\|_{\bm{V}_M(\omega)}^2, \quad\textup{ for all }\bm{\xi}, \bm{\eta} \in \bm{V}_M(\omega),
\end{equation}
for some $c=c(\omega,\bm{\theta})>0$. Define the operator $\hat{\bm{\beta}}:\bm{V}_M(\omega) \to \bm{V}'_M(\omega)$ as the following composition
\begin{equation*}
\bm{V}_M(\omega) \hookrightarrow \bm{L}^2(\omega) \xrightarrow{\bm{\beta}} \bm{L}^2(\omega) \hookrightarrow \bm{V}'_M(\omega).
\end{equation*}

Thanks to the monotonicity of $\bm{\beta}$ established in Lemma~\ref{lem:beta}, we easily infer that $\hat{\bm{\beta}}$ is monotone.
Therefore, as a direct consequence of~\eqref{Aeps} and Lemma~\ref{lem:beta}, we can infer that the operator $(\bm{A}^\varepsilon+\hat{\bm{\beta}}):\bm{V}_M(\omega) \to \bm{V}'_M(\omega)$ is strictly monotone. To see this, observe that for all $\bm{\eta}$, $\bm{\xi} \in \bm{V}_M(\omega)$ with $\bm{\xi}\neq\bm{\eta}$, we have that
\begin{equation*}
\begin{aligned}
&\langle (\bm{A}^\varepsilon+\hat{\bm{\beta}}) \bm{\xi} -(\bm{A}^\varepsilon+\hat{\bm{\beta}})\bm{\eta},\bm{\xi}-\bm{\eta}\rangle_{\bm{V}'_M(\omega), \bm{V}_M(\omega)}\\
&= \langle \bm{A}^\varepsilon \bm{\xi} -\bm{A}^\varepsilon\bm{\eta},\bm{\xi}-\bm{\eta}\rangle_{\bm{V}'_M(\omega), \bm{V}_M(\omega)}\\
&\quad+\langle\hat{\bm{\beta}}(\bm{\xi})-\hat{\bm{\beta}}(\bm{\eta}),\bm{\xi}-\bm{\eta}\rangle_{\bm{V}'_M(\omega), \bm{V}_M(\omega)}\ge \varepsilon c \|\bm{\xi}-\bm{\eta}\|_{\bm{V}_M(\omega)}^2>0.
\end{aligned}
\end{equation*}

Similarly, we can establish the coerciveness of the operator $(\bm{A}^\varepsilon+\hat{\bm{\beta}})$. Indeed, we have that
\begin{equation*}
\dfrac{\langle (\bm{A}^\varepsilon+\hat{\bm{\beta}}) \bm{\eta},\bm{\eta}\rangle_{\bm{V}'_M(\omega), \bm{V}_M(\omega)}}{\|\bm{\eta}\|_{\bm{V}_M(\omega)}} =\dfrac{\langle\bm{A}^\varepsilon\bm{\eta},\bm{\eta}\rangle_{\bm{V}'_M(\omega), \bm{V}_M(\omega)}}{\|\bm{\eta}\|_{\bm{V}_M(\omega)}} +\dfrac{\langle \hat{\bm{\beta}}(\bm{\eta}),\bm{\eta}\rangle_{\bm{V}'_M(\omega), \bm{V}_M(\omega)}}{\|\bm{\eta}\|_{\bm{V}_M(\omega)}} \ge c\varepsilon \|\bm{\eta}\|_{\bm{V}_M(\omega)},
\end{equation*}
where the last inequality is obtained by combining~\eqref{Aeps}, Lemma~\ref{lem:beta} with the fact that $\bm{0} \in \bm{U}_M(\omega)$ or, equivalently, that $\bm{\beta}(\bm{0})=\bm{0}$ in $\bm{L}^2(\omega)$.

The continuity of the operator $\bm{A}^\varepsilon$ and the Lipschitz continuity of the operator $\bm{\beta}$ established in Lemma~\ref{lem:beta} in turn give that the operator $(\bm{A}^\varepsilon+\hat{\bm{\beta}})$ is hemicontinuous, and we are in position to apply the Minty-Browder theorem (cf., e.g., Theorem~9.14-1 of~\cite{PGCLNFAA}) to establish that there exists a unique solution $\bm{\zeta}^\varepsilon_\kappa \in \bm{V}_M(\omega)$ for Problem~\ref{problem2}.

Observe that the fact that $\min_{y\in\overline{\omega}} (\bm{\theta}(y)\cdot\bm{q})>0$ implies:
\begin{equation}
	\label{beta-2}
	\begin{aligned}
		&\int_{\omega} \bm{\beta}(\bm{\zeta}^\varepsilon_\kappa) \cdot\bm{\zeta}^\varepsilon_\kappa \dd y
		=\int_{\omega}\dfrac{1}{\sqrt{\sum_{\ell=1}^{3}|\bm{a}^\ell \cdot\bm{q}|^2}} \left(-\{(\bm{\theta}+\zeta^\varepsilon_{\kappa,j}\bm{a}^j)\cdot\bm{q}\}^{-}\right) (\zeta^\varepsilon_{\kappa,i}\bm{a}^i\cdot\bm{q}) \dd y\\
		&=\int_{\omega}\dfrac{1}{\sqrt{\sum_{\ell=1}^{3}|\bm{a}^\ell \cdot\bm{q}|^2}} \left(-\{(\bm{\theta}+\zeta^\varepsilon_{\kappa,j}\bm{a}^j)\cdot\bm{q}\}^{-}\right) ((\bm{\theta}+\zeta^\varepsilon_{\kappa,i}\bm{a}^i)\cdot\bm{q}) \dd y\\
		&\quad-\int_{\omega}\dfrac{1}{\sqrt{\sum_{\ell=1}^{3}|\bm{a}^\ell \cdot\bm{q}|^2}} \left(-\{(\bm{\theta}+\zeta^\varepsilon_{\kappa,j}\bm{a}^j)\cdot\bm{q}\}^{-}\right) (\bm{\theta}\cdot\bm{q}) \dd y\\
		&\ge \int_{\omega}\dfrac{1}{\sqrt{\sum_{\ell=1}^{3}|\bm{a}^\ell \cdot\bm{q}|^2}} |-\{(\bm{\theta}+\zeta^\varepsilon_{\kappa,i}\bm{a}^i)\cdot \bm{q}\}^{-}|^2  \dd y.
	\end{aligned}
\end{equation}

Furthermore, if we specialize $\bm{\eta}=\bm{\zeta}^\varepsilon_\kappa$ in the variational equations of Problem~\ref{problem2}, we have that an application of Korn's inequality (Theorem~\ref{korn}), the monotonicity of $\bm{\beta}$ (Lemma~\ref{lem:beta}), the strict positiveness and boundedness of $a$ (Theorems~ 3.1-1 of~\cite{Ciarlet2000}), the uniform positive definiteness of the fourth order two-dimensional elasticity tensor $(a^{\alpha\beta\sigma\tau})$ (Theorem~3.3-2 of~\cite{Ciarlet2000}), and the fact that $\bm{0} \in \bm{U}_M(\omega)$ or, equivalently, that $\bm{\beta}(\bm{0})=\bm{0}$ in $\bm{L}^2(\omega)$ give:
\begin{equation*}
\begin{aligned}
\dfrac{\varepsilon\sqrt{a_0}}{c_0 c_e}\|\bm{\zeta}^\varepsilon_\kappa\|_{\bm{V}_M(\omega)}^2 
&\le \varepsilon\int_{\omega} a^{\alpha\beta\sigma\tau} \gamma_{\sigma\tau}(\bm{\zeta}^\varepsilon_\kappa) \gamma_{\alpha \beta}(\bm{\zeta}^\varepsilon_\kappa) \sqrt{a} \dd y +\dfrac{\varepsilon}{\kappa} \int_{\omega} \bm{\beta}(\bm{\zeta}^\varepsilon_\kappa) \cdot\bm{\zeta}^\varepsilon_\kappa \dd y\\
&\le \|\bm{p}^\varepsilon\|_{\bm{L}^2(\omega)} \|\bm{\zeta}^\varepsilon_\kappa\|_{\bm{V}_M(\omega)} \sqrt{a_1}
=\varepsilon \sqrt{a_1}\|\bm{p}\|_{\bm{L}^2(\omega)} \|\bm{\zeta}^\varepsilon_\kappa\|_{\bm{V}_M(\omega)}.
\end{aligned}
\end{equation*}
Note that the last equality holds thanks to the definition of $\bm{p}=(p^i)$ and $\bm{p}^\varepsilon=(p^{i,\varepsilon})$ introduced, respectively, in Theorem~\ref{t:4} and Problem~\ref{problem1}.

By virtue of the definition of $p^{i,\varepsilon}$ and the assumptions on the data stated at the beginning of section~\ref{sec3}, we get that $\|\bm{\zeta}^\varepsilon_\kappa\|_{\bm{V}_M(\omega)}$ is bounded independently of $\kappa$ and $\varepsilon$. Therefore, by the Banach-Eberlein-Smulian theorem (cf., e.g., Theorem~5.14-4 of~\cite{PGCLNFAA}), we can extract a subsequence, still denoted $\{\bm{\zeta}^\varepsilon_\kappa\}_{\kappa>0}$ such that
\begin{equation}
\label{beta-1}
\bm{\zeta}^\varepsilon_\kappa \rightharpoonup \bm{\zeta}^\varepsilon, \quad\textup{ in } \bm{V}_M(\omega) \textup{ as } \kappa\to0^+. 
\end{equation}

Specializing $\bm{\eta}=\bm{\zeta}^\varepsilon_\kappa$ in the variational equations of Problem~\ref{problem2} and applying~\eqref{beta-1} and~\eqref{beta-2} give that
\begin{equation}
\label{beta-2.5}
\dfrac{\left(3\max\{\|\bm{a}^j \cdot \bm{q}\|_{\mathcal{C}^0(\overline{\omega})}^2;1\le j\le 3\}\right)^{-1/2}}{\kappa}\|-\{(\bm{\theta}+\zeta^\varepsilon_{\kappa,j}\bm{a}^j)\bm
q\}^{-}\|_{\bm{L}^2(\omega)}^2
\le\dfrac{1}{\kappa}\int_{\omega} \bm{\beta}(\bm{\zeta}^\varepsilon_\kappa) \cdot\bm{\zeta}^\varepsilon_\kappa \dd y\le C,
\end{equation}
for some $C>0$ independent of $\varepsilon$ and $\kappa$. Therefore, we have that an application of the Banach-Eberlein-Smulian theorem and~\eqref{beta-2.5} give that
\begin{equation}
\label{beta-3}
\bm{\beta}(\bm{\zeta}^\varepsilon_\kappa) \to \bm{0},\quad\textup{ in }\bm{L}^2(\omega) \textup{ as }\kappa\to0^+,
\end{equation}
and that
\begin{equation}
\label{beta-4}
\langle \hat{\bm{\beta}}(\bm{\zeta}^\varepsilon_\kappa),\bm{\zeta}^\varepsilon_\kappa\rangle_{\bm{V}'_M(\omega), \bm{V}_M(\omega)} \to 0,\quad\textup{ as }\kappa\to0^+.
\end{equation}

Therefore, the monotonicity of $\hat{\bm{\beta}}$ (which is a direct consequence of Lemma~\ref{lem:beta}), and the the properties established in~\eqref{beta-1}, \eqref{beta-3} and~\eqref{beta-4} give that $\hat{\bm{\beta}}(\bm{\zeta}^\varepsilon)=\bm{0}$, so that $\bm{\zeta}^\varepsilon \in \bm{U}_M(\omega)$.

Observe that the monotonicity of $\bm{\beta}$ (viz. Lemma~\ref{lem:beta}), the properties of $\bm{\zeta}^\varepsilon_\kappa$, the continuity of the components $\gamma_{\alpha \beta}$ of the linearized change of metric tensor, the definition of $\bm{p}^\varepsilon$ (Theorem~\ref{t:4}), the boundedness $\bm{\zeta}^\varepsilon$ independently of $\varepsilon$ (Theorem~\ref{t:4}), and the weak convergence~\eqref{beta-1} give
\begin{align*}
\|\bm{\zeta}^\varepsilon_\kappa - \bm{\zeta}^\varepsilon\|_{\bm{V}_M(\omega)}^2 &\le
\dfrac{c_0 c_e}{\sqrt{a_0}}\int_{\omega} a^{\alpha\beta\sigma\tau}\gamma_{\sigma\tau}(\bm{\zeta}^\varepsilon_\kappa - \bm{\zeta}^\varepsilon)\gamma_{\alpha \beta}(\bm{\zeta}^\varepsilon_\kappa - \bm{\zeta}^\varepsilon)\sqrt{a} \dd y\\
&=-\dfrac{c_0 c_e}{\kappa\sqrt{a_0}}\int_{\omega} \bm{\beta}(\bm{\zeta}^\varepsilon_\kappa) \cdot (\bm{\zeta}^\varepsilon_\kappa - \bm{\zeta}^\varepsilon) \dd y\\
&\quad+\dfrac{c_0 c_e}{\varepsilon\sqrt{a_0}} \int_{\omega} p^{i, \varepsilon} (\zeta^\varepsilon_{\kappa,i}-\zeta^\varepsilon_i) \sqrt{a} \dd y\\
&\quad-\dfrac{c_0 c_e}{\sqrt{a_0}}\int_{\omega} a^{\alpha\beta\sigma\tau}\gamma_{\sigma\tau}(\bm{\zeta}^\varepsilon)\gamma_{\alpha \beta}(\bm{\zeta}^\varepsilon_\kappa - \bm{\zeta}^\varepsilon)\sqrt{a} \dd y\\
&\le \dfrac{c_0 c_e}{\varepsilon\sqrt{a_0}} \int_{\omega} p^{i, \varepsilon} (\zeta^\varepsilon_{\kappa,i}-\zeta^\varepsilon_i) \sqrt{a} \dd y\\
&\quad-\dfrac{c_0 c_e}{\sqrt{a_0}}\int_{\omega} a^{\alpha\beta\sigma\tau}\gamma_{\sigma\tau}(\bm{\zeta}^\varepsilon)\gamma_{\alpha \beta}(\bm{\zeta}^\varepsilon_\kappa - \bm{\zeta}^\varepsilon)\sqrt{a} \dd y\\
&=\dfrac{c_0 c_e}{\sqrt{a_0}} \int_{\omega} p^i (\zeta^\varepsilon_{\kappa,i}-\zeta^\varepsilon_i) \sqrt{a} \dd y\\
&\quad-\dfrac{c_0 c_e}{\sqrt{a_0}}\int_{\omega} a^{\alpha\beta\sigma\tau}\gamma_{\sigma\tau}(\bm{\zeta}^\varepsilon)\gamma_{\alpha \beta}(\bm{\zeta}^\varepsilon_\kappa - \bm{\zeta}^\varepsilon)\sqrt{a} \dd y\to 0,
\end{align*}
as $\kappa \to 0^+$. Observe that the latter term is bounded independently of $\varepsilon$ and $\kappa$. In conclusion, we have been able to establish the strong convergence:
\begin{equation}
\label{beta-5}
\bm{\zeta}^\varepsilon_\kappa \to \bm{\zeta}^\varepsilon,\quad\textup{ in } \bm{V}_M(\omega) \textup{ as } \kappa \to0^+.
\end{equation}

Specializing $(\bm{\eta}-\bm{\zeta}^\varepsilon_\kappa)\in\bm{V}_M(\omega)$ in the variational equations of Problem~\ref{problem2}, with $\bm{\eta}\in\bm{U}_M(\omega)$, the monotonicity of $\bm{\beta}$, the convergence~\eqref{beta-3} and the convergence~\eqref{beta-5} immediately give that the limit $\bm{\zeta}^\varepsilon$ satisfies the variational inequalities in Problem~\ref{problem1}. This completes the proof.
\end{proof}

We observe that in the proof of Theorem~\ref{ex-un-kappa}, we established that $\|\bm{\zeta}^\varepsilon_\kappa-\bm{\zeta}^\varepsilon\|_{\bm{V}_M(\omega)}$ converges to zero as $\kappa\to0^+$. For the purpose of constructing a convergent numerical scheme for approximating the solution of the variational inequalities in Problem~\eqref{problem1}, we need to establish \emph{how fast} the latter norm converges to zero as $\kappa\to0^+$.
In order to establish this property, we need to prove a preparatory result concerning the augmentation of regularity of th solution of Problem~\ref{problem2} by resorting to the finite difference quotients approach originally proposed by Agmon, Douglis \& Nirenberg~\cite{AgmDouNir1959,AgmDouNir1964}, as well as the approach proposed by Frehse~\cite{Frehse1971} for variational inequalities, that was later on generalized in~\cite{Pie-2022-interior,Pie2020-1}.

Recalling that $\bm{\zeta}^\varepsilon_\kappa$ denotes the solution of Problem~\ref{problem2}, in the same spirit as Theorem~4.5-1(b) of~\cite{Ciarlet2000} we define
$$
n^{\alpha\beta,\varepsilon}_\kappa:=\varepsilon a^{\alpha\beta\sigma\tau}\gamma_{\sigma\tau}(\bm{\zeta}^\varepsilon_\kappa),
$$
and we also define
$$
n^{\alpha\beta,\varepsilon}_\kappa|_{\sigma}:=\partial_\sigma n^{\alpha\beta,\varepsilon}_\kappa+\Gamma^\alpha_{\sigma\tau}n^{\beta\tau,\varepsilon}_\kappa+\Gamma^\beta_{\sigma\tau}n^{\alpha\tau,\varepsilon}_\kappa.
$$

If the solution $\bm{\zeta}^\varepsilon_\kappa$ of Problem~\ref{problem2} is smooth enough, then it is immediate to see that it satisfies the following boundary value problem:
\begin{equation}
\label{BVP}
\begin{cases}
	-n^{\alpha\beta,\varepsilon}_\kappa|_{\beta}+\dfrac{\varepsilon}{\kappa\sqrt{a}}\beta_\alpha(\bm{\zeta}^\varepsilon_\kappa)&=p^{\alpha,\varepsilon},\textup{ in }\omega,\\
	\\
	-b_{\alpha\beta}n^{\alpha\beta,\varepsilon}_\kappa+\dfrac{\varepsilon}{\kappa\sqrt{a}}\beta_3(\bm{\zeta}^\varepsilon_\kappa)&=p^{3,\varepsilon},\textup{ in }\omega,\\
	\\
	\zeta^\varepsilon_{\kappa,\alpha}=0,\textup{ on }\gamma.
\end{cases}
\end{equation}

\section{Augmentation of the regularity of the solution of Problem~\ref{problem2}}
\label{sec:aug-interior}

Let $\omega_0\subset \omega$ and $\omega_1 \subset \omega$ be such that
\begin{equation}
\label{sets}
\omega_1 \subset\subset \omega_0 \subset \subset \omega.
\end{equation}

Let $\varphi_1 \in \mathcal{D}(\omega)$ be such that 
$$
\text{supp }\varphi_1 \subset\subset \omega_1 \textup{ and } 0\le \varphi_1 \le 1.
$$

By the definition of the symbol $\subset\subset$ in~\eqref{sets}, we obtain that the quantity
\begin{equation}
\label{d}
d=d(\varphi_1):=\dfrac{1}{2}\min\{\textup{dist}(\partial\omega_1,\partial\omega_0),\textup{dist}(\partial\omega_0,\gamma),\textup{dist}(\textup{supp }\varphi_1,\partial\omega_1)\}
\end{equation}
is strictly greater than zero. 

Denote by $D_{\rho h}$ the first order (forward) finite difference quotient of either a function or a vector field in the canonical direction $\bm{e}_\rho$ of $\mathbb{R}^2$ and with increment size $0<h<d$ sufficiently small. We can regard the first order (forward) finite difference quotient of a function as a linear operator defined as follows:
$$
D_{\rho h}: L^2(\omega) \to L^2(\omega_0).
$$

The first order finite difference quotient of a function $\xi$ in the canonical direction $\bm{e}_\rho$ of $\mathbb{R}^2$ and with increment size $0<h<d$ is defined by:
$$
D_{\rho h}\xi(y):=\dfrac{\xi(y+h\bm{e}_\rho)-\xi(y)}{h},
$$
for all (or, possibly, a.a.) $y\in\omega$ such that $(y+h\bm{e}_\rho)\in\omega$.

The first order finite difference quotient of a vector field $\bm{\xi}=(\xi_i)$ in the canonical direction $\bm{e}_\rho$ of $\mathbb{R}^2$ and with increment size $0<h<d$ is defined by
$$
D_{\rho h}\bm{\xi}(y):=\dfrac{\bm{\xi}(y+h\bm{e}_\rho)-\bm{\xi}(y)}{h},
$$
or, equivalently,
$$
D_{\rho h}\bm{\xi}(y)=(D_{\rho h}\xi_i(y)).
$$

Similarly, we can show that the first order (forward) finite difference quotient of a vector field is a linear operator from $\bm{L}^2(\omega)$ to $\bm{L}^2(\omega_0)$. 

We define the second order finite difference quotient of a function $\xi$ in the canonical direction $\bm{e}_\rho$ of $\mathbb{R}^2$ and with increment size $0<h<d$ by
$$
\delta_{\rho h}\xi(y):=\dfrac{\xi(y+h \bm{e}_\rho)-2 \xi(y)+\xi(y-h \bm{e}_\rho)}{h^2},
$$
for all (or, possibly, a.a.) $y \in \omega$ such that $(y\pm h\bm{e}_\rho) \in \omega$.

The second order finite difference quotient of a vector field $\bm{\xi}=(\xi_i)$ in the canonical direction $\bm{e}_\rho$ of $\mathbb{R}^2$ and with increment size $0<h<d$ is defined by
$$
\delta_{\rho h}\bm{\xi}(y):=\left(\dfrac{\xi_i(y+h \bm{e}_\rho)-2 \xi_i(y)+\xi_i(y-h \bm{e}_\rho)}{h^2}\right),
$$
for all (or, possibly, a.a.) $y \in \omega$ such that $(y\pm h\bm{e}_\rho) \in \omega$.

Define, following page~293 of~\cite{Evans2010}, the mapping $D_{-\rho h}:L^2(\omega) \to L^2(\omega_0)$ by
$$
D_{-\rho h}\xi(y):=\dfrac{\xi(y)-\xi(y-h\bm{e}_\rho)}{h},
$$
as well as the mapping $D_{-\rho h}:\bm{L}^2(\omega) \to \bm{L}^2(\omega_0)$ by
$$
D_{-\rho h}\bm{\xi}(y):=\dfrac{\bm{\xi}(y)-\bm{\xi}(y-h\bm{e}_\rho)}{h}.
$$

Note in passing that the second order finite difference quotient of a function $\xi$ can be expressed in terms of the first order finite difference quotient via the following identity:
\begin{equation*}
\label{ide}
\delta_{\rho h} \xi=D_{-\rho h} D_{\rho h} \xi.
\end{equation*}

Similarly, the second order finite difference quotient of a vector field $\bm{\xi}=(\xi_i)$ can be expressed in terms of the first order finite difference quotient via the following identity:
\begin{equation*}
\label{ide2}
\delta_{\rho h} \bm{\xi}=D_{-\rho h} D_{\rho h} \bm{\xi}.
\end{equation*}

	Let us define the translation operator $E$ in the canonical direction $\bm{e}_\rho$ of $\mathbb{R}^2$ and with increment size $0<h<d$ for a smooth enough function $v:\omega_0 \to \mathbb{R}$ by
\begin{align*}
E_{\rho h} v(y)&:=v(y+h \bm{e}_\rho),\\
E_{-\rho h} v(y)&:=v(y-h \bm{e}_\rho).
\end{align*}

Moreover, the following identities can be easily checked out (cf.\,\cite{Frehse1971} and~\cite{Pie2020-1}):
\begin{align}
D_{\rho h}(v w)&=(E_{\rho h} w) (D_{\rho h} v)+v D_{\rho h} w, \label{D+}\\
D_{-\rho h}(v w)&=(E_{-\rho h} w )(D_{-\rho h} v)+v D_{-\rho h} w, \label{D-}\\
\delta_{\rho h}(vw)&=w \delta_{\rho h} v+(D_{\rho h}w )(D_{\rho h} v) +(D_{-\rho h}w)( D_{-\rho h} v)+v\delta_{\rho h}w.\label{delta+}
\end{align}



We observe that the following properties hold for finite difference quotients.

The proof of the first lemma can be found in Lemma~4 of~\cite{Pie-2022-interior} and for this reason it is omitted.
\begin{lemma}
	\label{lem:fdq-1}
	Let $\{v_k\}_{k\ge1}$ be a sequence in $\mathcal{C}^1(\overline{\omega})$ that converges to a certain element $v \in H^1(\omega)$ with respect to the norm $\|\cdot\|_{H^1(\omega)}$.
	Then, we have that for all $0<h<d$ and all $\rho\in\{1,2\}$, 
	\begin{equation*}
	\label{conv1}
	D_{\rho h} v\in H^1(\omega_0) \textup{ with }\partial_\alpha(D_{\rho h} v)=D_{\rho h} (\partial_\alpha v) \quad\textup{ and }\quad D_{\rho h} v_k\to D_{\rho h} v \textup{ in }H^1(\omega_0) \textup{ as } k\to\infty.
	\end{equation*}
	\qed
\end{lemma}

As a direct consequence of Lemma~\ref{lem:fdq-1}, if $\{v_k\}_{k\ge1}$ is a sequence in $\mathcal{C}^1(\overline{\omega})$ that converges to a certain element $v \in H^1(\omega)$ with respect to the norm $\|\cdot\|_{H^1(\omega)}$, then, we have that for all $0<h<d$ and all $\rho\in\{1,2\}$, 
\begin{equation*}
\label{conv1-delta}
\delta_{\rho h} v\in H^1(\omega_1) \textup{ with }\partial_\alpha(\delta_{\rho h} v)=\delta_{\rho h} (\partial_\alpha v) \quad\textup{ and }\quad \delta_{\rho h} v_k\to \delta_{\rho h} v \textup{ in }H^1(\omega_1) \textup{ as } k\to\infty.
\end{equation*}

We also state the following elementary lemma, which exploits the compactness of the support of the test function $\varphi_1$ defined beforehand.
\begin{lemma}
\label{fdq-neg-part}
Let $f \in\mathcal{D}(\omega)$ with $\textup{supp }f \subset\subset \omega_1$.
Let $0<h<d$, where $d>0$ has been defined in~\eqref{d} and let $\rho\in\{1,2\}$ be given. Then,
\begin{equation*}
\int_{\omega} D_{\rho h}(-f^{-}) D_{\rho h}(f^{+}) \dd y \ge 0.
\end{equation*}
\end{lemma}
\begin{proof}
By the definition of $D_{\rho h}$ and the definition of the positive and negative part of a function, we have that
\begin{equation*}
\int_{\omega} D_{\rho h}(-f^{-}) D_{\rho h}(f^{+}) \dd y
=-\int_{\omega} \left(\dfrac{f^{-}(y+h\bm{e}_\rho)-f^{-}(y)}{h}\right) \left(\dfrac{f^{+}(y+h\bm{e}_\rho)-f^{+}(y)}{h}\right) \dd y.
\end{equation*}

If $y\in \omega$ is such that $f(y+h\bm{e}_\rho)>0$ and $f(y)>0$ then the integrand (i.e., the argument of the integral under consideration) of interest is equal to zero.

If $y\in \omega$ is such that $f(y+h\bm{e}_\rho)<0$ and $f(y)<0$ then the integrand (i.e., the argument of the integral under consideration) of interest is equal to zero.

If $y\in \omega$ is such that $f(y+h\bm{e}_\rho)>0$ and $f(y)<0$ then the integrand (i.e., the argument of the integral under consideration) of interest becomes equal to
\begin{equation*}
- \left(-\dfrac{\{f(y)\}^{-}}{h}\right) \left(\dfrac{\{f(y+h\bm{e}_\rho)\}^{+}}{h}\right) > 0.
\end{equation*}

If $y\in \omega$ is such that $f(y+h\bm{e}_\rho)<0$ and $f(y)>0$ then the integrand (i.e., the argument of the integral under consideration) of interest becomes equal to
\begin{equation*}
-\left(\dfrac{\{f(y+h\bm{e}_\rho)\}^{-}}{h}\right) \left(-\dfrac{\{f(y)\}^{+}}{h}\right) > 0.
\end{equation*}

In conclusion, the integrand is never negative and the integral under examination is always greater or equal than zero, as it was to be proved.
\end{proof}

Let us recall that $\bm{\theta}(y) \cdot \bm{q}>0$ for all $y \in \overline{\omega}$ (Lemma~\ref{lem0}), where the unit-norm vector $\bm{q}$ is given. In view of this, we wonder whether the immersion $\bm{\theta} \in \mathcal{C}^3(\overline{\omega};\mathbb{E}^3)$ admits a prolongation $\tilde{\bm{\theta}}\in\mathcal{C}^3(\overline{\tilde{\omega}};\mathbb{E}^3)$, for some domain $\omega \subset \subset \tilde{\omega}$, prolongation which is associated with the natural covariant and contravariant bases $\{\tilde{\bm{a}}_1, \tilde{\bm{a}}_2, \tilde{\bm{a}}_3\}$ and $\{\tilde{\bm{a}}^1, \tilde{\bm{a}}^2, \tilde{\bm{a}}^3\}$ and which enjoys the following properties:
\begin{itemize}
	\item[(a)] The mapping $\tilde{\bm{\theta}} \in \mathcal{C}^3(\overline{\tilde{\omega}};\mathbb{E}^3)$ is an immersion and $\tilde{\bm{\theta}}\big|_{\overline{\omega}}=\bm{\theta}$;
	\item[(b)] The surface $\tilde{\bm{\theta}}(\overline{\tilde{\omega}})$ is elliptic;
	\item[(c)] If $\min_{y \in \overline{\omega}}(\bm{\theta}(y) \cdot\bm{q}) >0$ then $\min_{y \in \overline{\tilde{\omega}}}(\tilde{\bm{\theta}}(y) \cdot\bm{q}) >0$;
	\item[(d)] If $\min_{y \in \overline{\omega}} (\bm{a}^3(y) \cdot\bm{q})>0$ then $\min_{y \in \overline{\tilde{\omega}}} (\tilde{\bm{a}}^3(y) \cdot\bm{q})>0$.
\end{itemize}

We will say that $\bm{\theta}$ satisfies the ``prolongation property'' if there exists an extension $\tilde{\bm{\theta}}$ satisfying the properties (a)--(d) above.

Thanks to the Whitney's extension theorem (cf., e.g., Theorem~2.3.6 of~\cite{Hormander1990}), we are able to give a \emph{constructive proof} of the fact that the ``prolongation property'' is satisfied by all the elliptic surfaces satisfying the sufficient condition ensuring the ``density property'', thus giving an affirmative answer to the question posed above.

\begin{lemma}
	\label{geometry}
	Let $\omega \subset \mathbb{R}^2$ be a domain and let $\bm{\vartheta} \in \mathcal{C}^2(\overline{\omega};\mathbb{E}^3)$ be an immersion associated with an elliptic surface and satisfying the sufficient condition ensuring the ``density property''. Then $\bm{\vartheta}$ satisfies the ``prolongation property''.
\end{lemma}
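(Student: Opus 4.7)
\medskip

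\noindent\textbf{Proof plan for Lemma~\ref{geometry}.} The idea is that conditions (a)--(d) are all \emph{open} in the $\mathcal{C}^k$-topology (with $k=2$ or $k=3$), and they all hold with a \emph{strictly positive gap} on the compact set $\overline{\omega}$. Therefore any sufficiently small enlargement of $\omega$ combined with any smooth enough extension of $\bm{\vartheta}$ will still satisfy them.

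First, I invoke Whitney's extension theorem (Theorem~2.3.6 of~\cite{Hormander1990}) to produce a map $\tilde{\bm{\vartheta}}\in\mathcal{C}^3(\mathbb{R}^2;\mathbb{E}^3)$ (with the corresponding class if $\bm{\vartheta}$ is only $\mathcal{C}^2$) such that $\tilde{\bm{\vartheta}}\big|_{\overline{\omega}}=\bm{\vartheta}$. This guarantees in particular that on $\overline{\omega}$ one has $\tilde{\bm{a}}_\alpha=\bm{a}_\alpha$, hence $\tilde{\bm{a}}_3=\bm{a}_3$, $\tilde{\bm{a}}^i=\bm{a}^i$, and that the Gaussian curvature, first fundamental form, etc., of $\tilde{\bm{\vartheta}}$ all agree with those of $\bm{\vartheta}$ on $\overline{\omega}$.

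Next, I record, using the compactness of $\overline{\omega}$ and the hypotheses made on $\bm{\vartheta}$, four strictly positive gap constants:
\begin{align*}
a_0&:=\min_{y\in\overline{\omega}}\sqrt{a(y)}>0, & \kappa_0&:=\min_{y\in\overline{\omega}}\kappa(y)>0,\\
d_0&:=\min_{y\in\overline{\omega}}(\bm{\vartheta}(y)\cdot\bm{q})>0, & n_0&:=\min_{y\in\overline{\omega}}(\bm{a}_3(y)\cdot\bm{q})>0.
\end{align*}
Since $\tilde{\bm{\vartheta}}\in\mathcal{C}^3(\mathbb{R}^2;\mathbb{E}^3)$, the quantities $\sqrt{\tilde{a}(y)}$, $\tilde{\kappa}(y)$, $\tilde{\bm{\vartheta}}(y)\cdot\bm{q}$ and $\tilde{\bm{a}}^3(y)\cdot\bm{q}$ are all continuous functions of $y$ that coincide with the corresponding quantities for $\bm{\vartheta}$ on $\overline{\omega}$. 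A standard uniform-continuity argument then produces an $\eta>0$ such that on the open $\eta$-neighborhood
$$
U_\eta:=\{y\in\mathbb{R}^2;\ \dist(y,\overline{\omega})<\eta\}
$$
the four inequalities
$$
\sqrt{\tilde{a}(y)}\ge \tfrac{a_0}{2},\quad \tilde{\kappa}(y)\ge \tfrac{\kappa_0}{2},\quad \tilde{\bm{\vartheta}}(y)\cdot\bm{q}\ge \tfrac{d_0}{2},\quad \tilde{\bm{a}}^3(y)\cdot\bm{q}\ge \tfrac{n_0}{2}
$$
all hold simultaneously (the last one only being required if the hypothesis of item~(d) is assumed).

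Finally, I choose $\tilde{\omega}$ to be any bounded Lipschitz domain satisfying
$$
\overline{\omega}\subset\tilde{\omega}\subset\overline{\tilde{\omega}}\subset U_\eta,
$$
for example a smooth $\eta/2$-tubular thickening of $\omega$ (its Lipschitz character is immediate because $\omega$ is Lipschitz and the thickening is uniform). Restricting $\tilde{\bm{\vartheta}}$ to $\overline{\tilde{\omega}}$ and using the four inequalities above verifies items (a) (immersion and agreement on $\overline{\omega}$), (b) (ellipticity), (c) (half-space condition) and (d) (sign of $\tilde{\bm{a}}^3\cdot\bm{q}$) of the prolongation property.

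The only mildly technical point, and the one I expect to be the most delicate to write up cleanly, is the geometric construction of $\tilde{\omega}$: one must obtain a bounded Lipschitz domain trapped between $\overline{\omega}$ and $U_\eta$. Because $\omega$ is already a Lipschitz domain, this can be done by a standard normal-thickening argument or by a convolution-regularization of the distance function $y\mapsto\dist(y,\omega)$; everything else reduces to continuity and compactness.
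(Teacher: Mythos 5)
Your proposal is correct and follows essentially the same route as the paper's proof: a componentwise application of Whitney's extension theorem followed by a continuity/compactness argument showing that the strict inequalities (positive Gaussian curvature, $\tilde{\bm{\vartheta}}\cdot\bm{q}>0$, $\tilde{\bm{a}}^3\cdot\bm{q}>0$) persist on a slightly enlarged domain $\tilde{\omega}\supset\supset\omega$. Your write-up is in fact somewhat more explicit than the paper's (quantified gap constants, the persistence of the immersion condition via $\sqrt{\tilde{a}}$, and the Lipschitz character of $\tilde{\omega}$), but the underlying argument is the same.
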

\begin{proof}
	Let $\{\bm{e}_i\}_{i=1}^3$ be an orthonormal covariant basis for the Euclidean space $\mathbb{E}^3$. Let $\{\bm{e}^i\}_{i=1}^3$ denote the corresponding contravariant basis of the Euclidean space $\mathbb{E}^3$, and recall that $\bm{e}_i=\bm{e}^i$ for all $1 \le i \le 3$.
	For each $y \in \overline{\omega}$, we can write $\bm{\vartheta}(y)=\vartheta_i(y) \bm{e}^i$. Therefore, each of the components $\vartheta_i$, $1 \le i \le 3$, of the immersion $\bm{\vartheta}$ is clearly of class $\mathcal{C}^2(\overline{\omega})$ since $\vartheta_j=\bm{\vartheta} \cdot \bm{e}_j$, for all $1\le j \le 3$ and the right hand side is of class $\mathcal{C}^2(\overline{\omega})$.
	
	By the Whitney extension theorem (cf., e.g., Theorem~2.3.6 of~\cite{Hormander1990}), for each $1 \le i \le 3$, there exists a function $\tilde{\vartheta}_i \in \mathcal{C}^2(\mathbb{R}^2)$ that extends $\vartheta_i$.
	We can thus define a mapping $\tilde{\bm{\vartheta}}:=\tilde{\vartheta}_i \bm{e}^i \in \mathcal{C}^2(\overline{\tilde{\omega}};\mathbb{E}^3)$ that extends $\bm{\vartheta}$, for all $\tilde{\omega}\supset\supset\omega$.
	
	Observe that the covariant basis $\{\bm{a}_i\}_{i=1}^3$ associated with $\bm{\vartheta}$ satisfies 
	$$
	\det(a_{\alpha\beta}(y))>0,\quad\textup{ for all }y\in \overline{\omega},
	$$
	since $\bm{\vartheta}$ is assumed to be an immersion. Let $\{\tilde{\bm{a}}_i\}_{i=1}^3$ denote the covariant basis of the extension $\tilde{\bm{\vartheta}}$.
	By the continuity of the determinant and the properties of the prolongation $\tilde{\bm{\vartheta}}$ with obvious meaning of the notation we have that, up to shrinking $\tilde{\omega}$:
	$$
	\det(\tilde{a}_{\alpha\beta}(y))>0,\quad\textup{ for all }y\in \overline{\tilde{\omega}},
	$$
	and property (a) is thus established.
	
	Recall that the Gaussian curvature $\kappa$ of the immersion $\bm{\vartheta}$ is defined at each $y\in\overline{\omega}$ by
	\begin{equation*}
		K(y)=\det(b_\alpha^\beta(y)),
	\end{equation*}
	namely, in terms of the invariants of the matrix associated with the mixed components of the second fundamental form of $\bm{\vartheta}$. Let $\tilde{K}$ denote the Gaussian curvature associated with the extension $\tilde{\bm{\vartheta}}$ and observe that $\tilde{K} \in \mathcal{C}^2(\mathbb{R})$, and that $\tilde{K}(y)=K(y)$, for all $y \in \overline{\omega}$.
	
	By the continuity of the mixed components of the second fundamental form (recall that $\bm{\vartheta}$ was assumed to be of class $\mathcal{C}^2(\overline{\omega};\mathbb{E}^3)$) we can thus find a set $\tilde{\omega} \supset\supset\omega$ such that $\tilde{K}>0$ in $\overline{\tilde{\omega}}$. This proves property (b).
	
	Properties (c) and (d) also a direct consequence of the continuity of $\tilde{\bm{\vartheta}}$. 
	
	Up to shrinking $\tilde{\omega}$, we can affirm without loss of generality that the restriction of the mapping $\tilde{\bm{\vartheta}}$ to the set $\tilde{\omega}$ is the sought prolongation of the given immersion $\bm{\vartheta}$, that satisfies properties (a)--(d) of the ``prolongation property''. This completes the proof.
\end{proof}

We are ready to state the main result of this section, that constitutes the first new result in this paper. Note in passing, upon proving the following theorem, we will be able to obtain the conclusion of Theorem~6 in~\cite{Pie-2022-interior} under weaker assumptions on the given term $\bm{p}^\varepsilon$. The main novelty of the approach presented in this paper is that the proof of the augmented regularity of the solution of Problem~\ref{problem1} will be established \emph{without} resorting to the ``density property'' exploited for establishing Theorem~\ref{density}.

\begin{theorem}
\label{aug:int}
Let $\omega_0$ and $\omega_1$ be as in~\eqref{sets}. Assume that there exists a unit norm vector $\bm{q} \in \mathbb{E}^3$ such that
\begin{equation*}
\min_{y \in \overline{\omega}} (\bm{\theta} (y) \cdot \bm{q}) > 0
\textup{ and }
\min_{y \in \overline{\omega}} (\bm{a}_3 (y) \cdot \bm{q}) > 0.
\end{equation*}

Assume also that the vector field $\bm{f}^\varepsilon=(f^{i,\varepsilon})$ defining the applied body force density is of class $L^2(\Omega^\varepsilon)\times L^2(\Omega^\varepsilon)\times H^1(\Omega^\varepsilon)$.
Then, the solution $\bm{\zeta}^\varepsilon_\kappa=(\zeta^\varepsilon_{\kappa,i})$ of Problem~\ref{problem2} is of class $\bm{V}_M(\omega)\cap H^2_{\textup{loc}}(\omega) \times H^2_{\textup{loc}}(\omega) \times H^1_{\textup{loc}}(\omega)$.
\end{theorem}
\begin{proof}
Fix $\varphi\in\mathcal{D}(\omega)$ such that $\text{supp }\varphi \subset\subset \omega_1$ and $0\le \varphi \le 1$. Let $\bm{\zeta}^\varepsilon_\kappa \in \bm{V}_M(\omega)$ be the unique solution of Problem~\ref{problem2}.
Observe that the transverse component $\zeta^\varepsilon_{\kappa,3}$ can be extended outside of $\omega$ by zero, preserving the $L^2(\mathbb{R}^2)$ regularity.
For what concerns the tangential components $\zeta^\varepsilon_{\kappa,\alpha}$, Proposition~9.18 of~\cite{Brez11} states that the only admissible prolongation outside of $\omega$ is the prolongation by zero. Therefore, it makes sense to consider the vector field
\begin{equation*}
(-\varphi \delta_{\rho h}(\varphi\bm{\zeta}^\varepsilon_\kappa)) \in H^1(\mathbb{R}^2) \times H^1(\mathbb{R}^2) \times L^2(\mathbb{R}^2).
\end{equation*}

Since the support of this vector field is compactly contained in $\omega_1$, we obtain that, actually,
\begin{equation*}
(-\varphi \delta_{\rho h}(\varphi\bm{\zeta}^\varepsilon_\kappa)) \in \bm{V}_M(\omega),
\end{equation*}
and we can specialize $\bm{\eta}=-\varphi \delta_{\rho h}(\varphi\bm{\zeta}^\varepsilon_\kappa)$ in the variational equations of Problem~\ref{problem2}.

Let us now evaluate
\begin{equation*}
\begin{aligned}
&\int_{\omega} p^{i,\varepsilon} (-\varphi \delta_{\rho h}(\varphi\zeta^\varepsilon_{\kappa,i})) \sqrt{a} \dd y
=-\int_{\omega_1}(\varphi p^{i,\varepsilon}) (\delta_{\rho h}(\varphi\zeta^\varepsilon_{\kappa,i})) \sqrt{a} \dd y\\
&=-\int_{\omega_1}(\varphi p^{\alpha,\varepsilon}) (\delta_{\rho h}(\varphi\zeta^\varepsilon_{\kappa,\alpha})) \sqrt{a} \dd y
+\int_{\omega}D_{\rho h}(\varphi p^{3,\varepsilon}) (D_{\rho h}(\varphi\zeta^\varepsilon_{\kappa,3})) \sqrt{a} \dd y\\
&=\varepsilon \|\varphi\|_{\mathcal{C}^1(\overline{\omega})}\|\bm{p}\|_{L^2(\omega)\times L^2(\omega) \times H^1(\omega)}\sqrt{a_1}
\|D_{\rho h}(\varphi \bm{\zeta}^\varepsilon_\kappa)\|_{H^1(\omega_1)\times H^1(\omega_1)\times L^2(\omega_1)},
\end{aligned}
\end{equation*}
where the second holds thanks to the integration by parts formula for finite difference quotients (cf. page~293 of~\cite{Evans2010}), and the inequality holds thanks to the H\"older inequality. Note in passing that $\|\bm{p}\|_{L^2(\omega)\times L^2(\omega) \times H^1(\omega)}$ is independent of $\varepsilon$ thanks to the assumptions on the data.

Thanks to these inequalities, we have that
\begin{equation}
\label{int-1}
\begin{aligned}
&\varepsilon\int_{\omega_1} a^{\alpha\beta\sigma\tau} \gamma_{\sigma\tau}(\bm{\zeta}^\varepsilon_\kappa) \gamma_{\alpha \beta}(-\varphi \delta_{\rho h}(\varphi\bm{\zeta}^\varepsilon_\kappa)) \sqrt{a} \dd y
+\dfrac{\varepsilon}{\kappa}\int_{\omega_1}\bm{\beta}(\bm{\zeta}^\varepsilon_\kappa) \cdot (-\varphi \delta_{\rho h}(\varphi\bm{\zeta}^\varepsilon_\kappa)) \dd y\\
&\le \varepsilon \|\varphi\|_{\mathcal{C}^1(\overline{\omega})}\|\bm{p}\|_{L^2(\omega)\times L^2(\omega) \times H^1(\omega)}\sqrt{a_1}
\|D_{\rho h}(\varphi \bm{\zeta}^\varepsilon_\kappa)\|_{H^1(\omega_1)\times H^1(\omega_1)\times L^2(\omega_1)}.
\end{aligned}
\end{equation}

The first step in our analysis consists in showing that:
\begin{equation}
\label{key-relation-2}
\begin{aligned}
&-\varepsilon \int_{\omega_1}a^{\alpha\beta\sigma\tau}\gamma_{\sigma \tau}(\varphi\bm{\zeta}^\varepsilon_\kappa)\gamma_{\alpha\beta}(\delta_{\rho h}(\varphi \bm{\zeta}^\varepsilon_\kappa))\sqrt{a} \dd y\\
&\le -\varepsilon \int_{\omega_1} a^{\alpha\beta\sigma\tau}\gamma_{\sigma \tau}(\bm{\zeta}^\varepsilon_\kappa)\gamma_{\alpha\beta}(\varphi \delta_{\rho h}(\varphi \bm{\zeta}^\varepsilon_\kappa))\sqrt{a} \dd y+C\varepsilon(1+\|D_{\rho h}(\varphi \bm{\zeta}^\varepsilon_\kappa)\|_{H^1(\omega_1)\times H^1(\omega_1)\times L^2(\omega_1)}),
\end{aligned}
\end{equation}
for some $C>0$ independent of $\varepsilon$, $\kappa$ and $h$.

Recalling the definition of the change of metric tensor components $\gamma_{\alpha \beta}$ (cf. section~\ref{sec1}) and recalling that $\bm{\theta} \in \mathcal{C}^3(\overline{\omega};\mathbb{E}^3)$, we have that the integral
\begin{equation*}
-\varepsilon\int_{\omega_1} a^{\alpha\beta\sigma\tau}\gamma_{\sigma \tau}(\varphi\bm{\zeta}^\varepsilon_\kappa)\gamma_{\alpha\beta}(\delta_{\rho h}(\varphi \bm{\zeta}^\varepsilon_\kappa)) \sqrt{a} \dd y
\end{equation*}
can be estimated by estimating the following main nine addends of its. In the evaluation of the following nine terms, the indices are assumed to be fixed, i.e., the summation rule with respect to repeated indices is not enforced in~\eqref{term-1}--\eqref{term-9} below.

Overall, the strategy we resort to is the following: we take into accounts the addends of the linearised change of metric tensor and we apply Green's formula and the integration-by-parts formula for finite difference quotients for suitably arranging the position of the compactly supported function $\varphi$.

First, thanks to an application of Green's formula (cf., e.g., Theorem~6.6-7 of~\cite{PGCLNFAA}), we estimate:
\begin{equation}
\label{term-1}
\begin{aligned}
&\int_{\omega_1} -a^{\alpha\beta\sigma\tau} \partial_\sigma(\varphi \zeta^\varepsilon_{\kappa,\tau}) \partial_\beta(\delta_{\rho h}(\varphi \zeta^\varepsilon_{\kappa,\alpha})) \sqrt{a} \dd y\\
&=\int_{\omega_1} -a^{\alpha\beta\sigma\tau} [(\partial_\sigma \varphi)\zeta^\varepsilon_{\kappa,\tau} +\varphi \partial_\sigma \zeta^\varepsilon_{\kappa,\tau}] \partial_\beta(\delta_{\rho h}(\varphi \zeta^\varepsilon_{\kappa,\alpha})) \sqrt{a} \dd y\\
&=\int_{\omega_1} \partial_\beta(a^{\alpha\beta\sigma\tau} (\partial_\sigma \varphi) \zeta^\varepsilon_{\kappa,\tau} \sqrt{a}) \delta_{\rho h}(\varphi \zeta^\varepsilon_{\kappa,\alpha})  \dd y\\
&\quad+\int_{\omega_1} -a^{\alpha\beta\sigma\tau} \partial_\sigma \zeta^\varepsilon_{\kappa,\tau} [\varphi\partial_\beta(\delta_{\rho h}(\varphi \zeta^\varepsilon_{\kappa,\alpha}))] \sqrt{a} \dd y\\
&\le C \|\zeta^\varepsilon_{\kappa,\tau}\|_{H^1(\omega_1) \times H^1(\omega_1) \times L^2(\omega_1)} \|D_{\rho h}(\varphi \zeta^\varepsilon_{\kappa,\alpha})\|_{H^1(\omega_1)}\\
&\quad+\int_{\omega_1} -a^{\alpha\beta\sigma\tau} \partial_\sigma \zeta^\varepsilon_{\kappa,\tau} \partial_\beta(\varphi \delta_{\rho h}(\varphi \zeta^\varepsilon_{\kappa,\alpha})) \sqrt{a} \dd y\\
&\quad +  \int_{\omega_1} a^{\alpha\beta\sigma\tau} (\partial_\sigma \zeta^\varepsilon_{\kappa,\tau}) (\partial_\beta \varphi) \delta_{\rho h}(\varphi \zeta^\varepsilon_{\kappa,\alpha}) \sqrt{a} \dd y\\
&\le \int_{\omega_1} -a^{\alpha\beta\sigma\tau} (\partial_\sigma \zeta^\varepsilon_{\kappa,\tau}) \partial_\beta(\varphi\delta_{\rho h}(\varphi \zeta^\varepsilon_{\kappa,\alpha})) \sqrt{a} \dd y +C \|D_{\rho h}(\varphi \zeta^\varepsilon_{\kappa,\alpha})\|_{H^1(\omega_1)}.
\end{aligned}
\end{equation}

Second, we estimate:
\begin{equation}
\label{term-2}
\begin{aligned}
&\int_{\omega_1} -a^{\alpha\beta\sigma\tau} (-\Gamma_{\sigma\tau}^\varsigma \varphi \zeta^\varepsilon_{\kappa,\varsigma}) \partial_\beta(\delta_{\rho h}(\varphi \zeta^\varepsilon_{\kappa,\alpha})) \sqrt{a} \dd y\\
&=\int_{\omega_1} a^{\alpha\beta\sigma\tau} \Gamma_{\sigma\tau}^\upsilon \zeta^\varepsilon_{\kappa,\upsilon} \partial_\beta(\varphi \delta_{\rho h}(\varphi \zeta^\varepsilon_{\kappa,\alpha})) \sqrt{a} \dd y\\
&\quad-\int_{\omega_1} a^{\alpha\beta\sigma\tau} \Gamma_{\sigma\tau}^\upsilon \zeta^\varepsilon_{\kappa,\upsilon} (\partial_\beta\varphi) (\delta_{\rho h}(\varphi \zeta^\varepsilon_{\kappa,\alpha})) \sqrt{a} \dd y\\
&\le \int_{\omega_1} a^{\alpha\beta\sigma\tau} \Gamma_{\sigma\tau}^\upsilon \zeta^\varepsilon_{\kappa,\upsilon} \partial_\beta(\varphi \delta_{\rho h}(\varphi \zeta^\varepsilon_{\kappa,\alpha})) \sqrt{a} \dd y +C \|D_{\rho h}(\varphi \zeta^\varepsilon_{\kappa,\alpha})\|_{H^1(\omega_1)},
\end{aligned}
\end{equation}
where the equality holds as a consequence of Green's formula.

Third, we estimate:
\begin{equation}
\label{term-3}
\begin{aligned}
&\int_{\omega_1} -a^{\alpha\beta\sigma\tau} (-b_{\alpha\beta}\varphi \zeta^\varepsilon_{\kappa,3}) \partial_\beta(\delta_{\rho h}(\varphi \zeta^\varepsilon_{\kappa,\alpha})) \sqrt{a} \dd y\\
&=\int_{\omega_1} a^{\alpha\beta\sigma\tau} (b_{\alpha\beta} \zeta^\varepsilon_{\kappa,3}) \partial_\beta(\varphi \delta_{\rho h}(\varphi \zeta^\varepsilon_{\kappa,\alpha})) \sqrt{a} \dd y\\
&\quad-\int_{\omega_1} a^{\alpha\beta\sigma\tau} b_{\alpha\beta} \zeta^\varepsilon_{\kappa,3} (\partial_\beta \varphi) \delta_{\rho h}(\varphi \zeta^\varepsilon_{\kappa,\alpha}) \sqrt{a} \dd y\\
&\le \int_{\omega_1} a^{\alpha\beta\sigma\tau} (b_{\alpha\beta} \zeta^\varepsilon_{\kappa,3}) \partial_\beta(\varphi \delta_{\rho h}(\varphi \zeta^\varepsilon_{\kappa,\alpha})) \sqrt{a} \dd y +C \|D_{\rho h}(\varphi \zeta^\varepsilon_{\kappa,\alpha})\|_{H^1(\omega_1)}.
\end{aligned}
\end{equation}

Fourth, we estimate:
\begin{equation}
\label{term-4}
\begin{aligned}
&\int_{\omega_1} -a^{\alpha\beta\sigma\tau} \partial_\sigma(\varphi \zeta^\varepsilon_{\kappa,\tau}) \Gamma_{\alpha\beta}^\upsilon \delta_{\rho h}(\varphi \zeta^\varepsilon_{\kappa,\upsilon}) \sqrt{a} \dd y\\
&=\int_{\mathcal{U}_1} -a^{\alpha\beta\sigma\tau} (\partial_\sigma\varphi) \zeta^\varepsilon_{\kappa,\tau} \Gamma_{\alpha\beta}^\upsilon \delta_{\rho h}(\varphi \zeta^\varepsilon_{\kappa,\upsilon}) \sqrt{a} \dd y\\
&\quad+\int_{\omega_1} -a^{\alpha\beta\sigma\tau} \varphi (\partial_\sigma \zeta^\varepsilon_{\kappa,\tau}) \Gamma_{\alpha\beta}^\upsilon \delta_{\rho h}(\varphi \zeta^\varepsilon_{\kappa,\upsilon}) \sqrt{a} \dd y\\
&\le C \|D_{\rho h}(\varphi \zeta^\varepsilon_{\kappa,\alpha})\|_{H^1(\omega_1)} 
+\int_{\omega_1} -a^{\alpha\beta\sigma\tau} (\partial_\sigma \zeta^\varepsilon_{\kappa,\tau}) \Gamma_{\alpha\beta}^\upsilon [\varphi\delta_{\rho h}(\varphi \zeta^\varepsilon_{\kappa,\upsilon})] \sqrt{a} \dd y.
\end{aligned}
\end{equation}

Fifth, we straightforwardly observe that:
\begin{equation}
\label{term-5}
\begin{aligned}
&\int_{\omega_1} -a^{\alpha\beta\sigma\tau} (-\Gamma_{\sigma\tau}^\varsigma \zeta^\varepsilon_{\kappa,\varsigma} \varphi) \Gamma_{\alpha\beta}^\upsilon \delta_{\rho h}(\zeta^\varepsilon_{\kappa,\upsilon} \varphi) \sqrt{a} \dd y\\
&\le C(1+\|D_{\rho h}(\varphi \bm{\zeta}^\varepsilon_\kappa)\|_{H^1(\omega_1) \times H^1(\omega_1) \times L^2(\omega_1)})\\
&\quad+ \int_{\omega_1} -a^{\alpha\beta\sigma\tau} (-\Gamma_{\sigma\tau}^\varsigma \zeta^\varepsilon_{\kappa,\varsigma}) \Gamma_{\alpha\beta}^\upsilon [\varphi\delta_{\rho h}(\zeta^\varepsilon_{\kappa,\upsilon} \varphi)] \sqrt{a} \dd y.
\end{aligned}
\end{equation}

Sixth, we straightforwardly observe that:
\begin{equation}
\label{term-6}
\begin{aligned}
&\int_{\omega_1} -a^{\alpha\beta\sigma\tau} (b_{\sigma\tau} \zeta^\varepsilon_{\kappa,3} \varphi) \Gamma_{\alpha\beta}^\upsilon \delta_{\rho h}(\varphi \zeta^\varepsilon_{\kappa,\upsilon}) \sqrt{a} \dd y\\
&\le C(1+\|D_{\rho h}(\varphi \bm{\zeta}^\varepsilon_\kappa)\|_{H^1(\omega_1) \times H^1(\omega_1) \times L^2(\omega_1)})\\
&\quad+\int_{\omega_1} -a^{\alpha\beta\sigma\tau} b_{\sigma\tau} \zeta^\varepsilon_{\kappa,3}  \Gamma_{\alpha\beta}^\upsilon [\varphi\delta_{\rho h}(\varphi \zeta^\varepsilon_{\kappa,\upsilon})] \sqrt{a} \dd y.
\end{aligned}
\end{equation}

Seventh, we straightforwardly observe that:
\begin{equation}
\label{term-7}
\begin{aligned}
&\int_{\omega_1} -a^{\alpha\beta\sigma\tau} (-\Gamma_{\sigma\tau}^\varsigma \varphi \zeta^\varepsilon_{\kappa,\varsigma}) b_{\alpha\beta} \delta_{\rho h}(\zeta^\varepsilon_{\kappa,3} \varphi) \sqrt{a} \dd y\\
&\le C(1+\|D_{\rho h}(\varphi \bm{\zeta}^\varepsilon_\kappa)\|_{H^1(\omega_1) \times H^1(\omega_1) \times L^2(\omega_1)})\\
&\quad+\int_{\omega_1} -a^{\alpha\beta\sigma\tau} (-\Gamma_{\sigma\tau}^\varsigma \zeta^\varepsilon_{\kappa,\varsigma}) b_{\alpha\beta} [\varphi\delta_{\rho h}(\zeta^\varepsilon_{\kappa,3} \varphi)] \sqrt{a} \dd y.
\end{aligned}
\end{equation}

Eighth, we estimate:
\begin{equation}
\label{term-8}
\begin{aligned}
&\int_{\omega_1} -a^{\alpha\beta\sigma\tau} \partial_\sigma(\varphi \zeta^\varepsilon_{\kappa,\tau}) b_{\alpha\beta} \delta_{\rho h}(\zeta^\varepsilon_{\kappa,3} \varphi) \sqrt{a} \dd y\\
&=\int_{\omega_1} -a^{\alpha\beta\sigma\tau} (\partial_\sigma \zeta^\varepsilon_{\kappa,\tau}) b_{\alpha\beta} [\varphi \delta_{\rho h}(\zeta^\varepsilon_{\kappa,3} \varphi)] \sqrt{a} \dd y\\
&\quad+\int_{\omega_1} -a^{\alpha\beta\sigma\tau} (\partial_\sigma \varphi) \zeta^\varepsilon_{\kappa,\tau} b_{\alpha\beta} \delta_{\rho h}(\zeta^\varepsilon_{\kappa,3} \varphi)\sqrt{a} \dd y\\
&=\int_{\omega_1} -a^{\alpha\beta\sigma\tau} (\partial_\sigma \zeta^\varepsilon_{\kappa,\tau}) b_{\alpha\beta} [\varphi \delta_{\rho h}(\zeta^\varepsilon_{\kappa,3} \varphi)] \sqrt{a} \dd y\\
&\quad+\int_{\omega_1} D_{\rho h}(-a^{\alpha\beta\sigma\tau} (\partial_\sigma \varphi) \zeta^\varepsilon_{\kappa,\tau} b_{\alpha\beta} \sqrt{a}) D_{\rho h}(\varphi \zeta^\varepsilon_{\kappa,3})  \dd y\\
&\le \int_{\omega_1} -a^{\alpha\beta\sigma\tau} (\partial_\sigma \zeta^\varepsilon_{\kappa,\tau}) b_{\alpha\beta} [\varphi \delta_{\rho h}(\zeta^\varepsilon_{\kappa,3} \varphi)] \sqrt{a} \dd y\\
&\quad + C(1+\|D_{\rho h}(\varphi \bm{\zeta}^\varepsilon_\kappa)\|_{H^1(\omega_1) \times H^1(\omega_1) \times L^2(\omega_1)}),
\end{aligned}
\end{equation}
where in the last equality we used the integration-by-parts formula for finite difference quotients.

Ninth, and last, we straightforwardly observe that
\begin{equation}
\label{term-9}
\begin{aligned}
&\int_{\omega_1} -a^{\alpha\beta\sigma\tau} (b_{\sigma\tau} \zeta^\varepsilon_{\kappa,3} \varphi) b_{\alpha\beta} \delta_{\rho h}(\zeta^\varepsilon_{\kappa,3} \varphi) \sqrt{a} \dd y\\
&=\int_{\omega_1} -a^{\alpha\beta\sigma\tau} (b_{\sigma\tau} \zeta^\varepsilon_{\kappa,3}) b_{\alpha\beta} [\varphi\delta_{\rho h}(\zeta^\varepsilon_{\kappa,3} \varphi)] \sqrt{a} \dd y\\
&\le C(1+\|D_{\rho h}(\varphi \bm{\zeta}^\varepsilon_\kappa)\|_{H^1(\omega_1) \times H^1(\omega_1) \times L^2(\omega_1)})\\
&\quad+ \int_{\omega_1} -a^{\alpha\beta\sigma\tau} (b_{\sigma\tau} \zeta^\varepsilon_{\kappa,3}) b_{\alpha\beta} [\varphi\delta_{\rho h}(\zeta^\varepsilon_{\kappa,3} \varphi)] \sqrt{a} \dd y.
\end{aligned}
\end{equation}

In conclusion, combining~\eqref{term-1}--\eqref{term-9} together gives~\eqref{key-relation-2}. Combining ~\eqref{int-1} and~\eqref{key-relation-2} gives that there exists a constant $C>0$ independent of $\varepsilon$, $\kappa$ and $h$ such that
\begin{equation*}
\begin{aligned}
&-\varepsilon \int_{\omega_1} a^{\alpha\beta\sigma\tau} \gamma_{\sigma\tau}(\varphi\bm{\zeta}^\varepsilon_\kappa) \gamma_{\alpha \beta}(\delta_{\rho h}(\varphi\bm{\zeta}^\varepsilon_\kappa)) \sqrt{a} \dd y\\
&\quad+\dfrac{\varepsilon}{\kappa}\int_{\omega_1}\bm{\beta}(\bm{\zeta}^\varepsilon_\kappa) \cdot (-\varphi \delta_{\rho h}(\varphi\bm{\zeta}^\varepsilon_\kappa)) \dd y \le C\varepsilon(1+\|D_{\rho h}(\varphi\bm{\zeta}^\varepsilon_\kappa)\|_{H^1(\omega_1)\times H^1(\omega_1)\times L^2(\omega_1)}).
\end{aligned}
\end{equation*}

An application of the integration-by-parts formula for finite difference quotients (cf., e.g., page~293 of~\cite{Evans2010}) and~\eqref{D+} turn the latter into:
\begin{equation*}
\begin{aligned}
&\varepsilon \int_{\omega_1} a^{\alpha\beta\sigma\tau} \gamma_{\sigma\tau}(D_{\rho h}(\varphi\bm{\zeta}^\varepsilon_\kappa)) \gamma_{\alpha \beta}(D_{\rho h}(\varphi\bm{\zeta}^\varepsilon_\kappa)) \sqrt{a} \dd y
+\varepsilon \int_{\omega_1} D_{\rho h}(a^{\alpha\beta\sigma\tau}\sqrt{a}) E_{\rho h}\left(\gamma_{\sigma\tau}(\varphi\bm{\zeta}^\varepsilon_\kappa)\right) \gamma_{\alpha \beta}(D_{\rho h}(\varphi\bm{\zeta}^\varepsilon_\kappa)) \dd y\\
&\quad+\dfrac{\varepsilon}{\kappa}\int_{\omega_1}\bm{\beta}(\bm{\zeta}^\varepsilon_\kappa) \cdot (-\varphi \delta_{\rho h}(\varphi\bm{\zeta}^\varepsilon_\kappa)) \dd y\\
&=\varepsilon \int_{\omega_1} D_{\rho h}\left(a^{\alpha\beta\sigma\tau} \gamma_{\sigma\tau}(\varphi\bm{\zeta}^\varepsilon_\kappa)\sqrt{a}\right) \gamma_{\alpha \beta}(D_{\rho h}(\varphi\bm{\zeta}^\varepsilon_\kappa))  \dd y
+\dfrac{\varepsilon}{\kappa}\int_{\omega_1}\bm{\beta}(\bm{\zeta}^\varepsilon_\kappa) \cdot (-\varphi \delta_{\rho h}(\varphi\bm{\zeta}^\varepsilon_\kappa)) \dd y\\
&=-\varepsilon \int_{\omega_1} a^{\alpha\beta\sigma\tau} \gamma_{\sigma\tau}(\varphi\bm{\zeta}^\varepsilon_\kappa) D_{-\rho h}\left(\gamma_{\alpha \beta}(D_{\rho h}(\varphi\bm{\zeta}^\varepsilon_\kappa))\right) \sqrt{a} \dd y
+\dfrac{\varepsilon}{\kappa}\int_{\omega_1}\bm{\beta}(\bm{\zeta}^\varepsilon_\kappa) \cdot (-\varphi \delta_{\rho h}(\varphi\bm{\zeta}^\varepsilon_\kappa)) \dd y\\
&=-\varepsilon \int_{\omega_1} a^{\alpha\beta\sigma\tau} \gamma_{\sigma\tau}(\varphi\bm{\zeta}^\varepsilon_\kappa) \gamma_{\alpha \beta}(\delta_{\rho h}(\varphi\bm{\zeta}^\varepsilon_\kappa)) \sqrt{a} \dd y
+\dfrac{\varepsilon}{\kappa}\int_{\omega_1}\bm{\beta}(\bm{\zeta}^\varepsilon_\kappa) \cdot (-\varphi \delta_{\rho h}(\varphi\bm{\zeta}^\varepsilon_\kappa)) \dd y\\
&\le C\varepsilon(1+\|D_{\rho h}(\varphi\bm{\zeta}^\varepsilon_\kappa)\|_{H^1(\omega_1)\times H^1(\omega_1)\times L^2(\omega_1)}),
\end{aligned}
\end{equation*}
for some $C>0$ independent of $\varepsilon$, $\kappa$ and $h$.

We then have that the fact that $\varphi$ has compact support in $\omega_1$, Korn's inequality (Theorem~\ref{korn}), the definition of $d$ (viz. \eqref{d}) give
\begin{equation*}
\begin{aligned}
&\dfrac{\varepsilon\sqrt{a_0}}{c_0 c_e}\|D_{\rho h}(\varphi \bm{\zeta}^\varepsilon_\kappa)\|_{H^1(\omega_1)\times H^1(\omega_1)\times L^2(\omega_1)}^2
+\dfrac{\varepsilon}{\kappa}\int_{\omega_1}\bm{\beta}(\bm{\zeta}^\varepsilon_\kappa) \cdot (-\varphi \delta_{\rho h}(\varphi\bm{\zeta}^\varepsilon_\kappa)) \dd y\\
&\le\varepsilon \int_{\omega_1} a^{\alpha\beta\sigma\tau} \gamma_{\sigma\tau}(D_{\rho h}(\varphi\bm{\zeta}^\varepsilon_\kappa)) \gamma_{\alpha \beta}(D_{\rho h}(\varphi\bm{\zeta}^\varepsilon_\kappa)) \sqrt{a} \dd y
+\dfrac{\varepsilon}{\kappa}\int_{\omega_1}\bm{\beta}(\bm{\zeta}^\varepsilon_\kappa) \cdot (-\varphi \delta_{\rho h}(\varphi\bm{\zeta}^\varepsilon_\kappa)) \dd y\\
&\le C\varepsilon(1+\|D_{\rho h}(\varphi\bm{\zeta}^\varepsilon_\kappa)\|_{H^1(\omega_1)\times H^1(\omega_1)\times L^2(\omega_1)})
-\varepsilon \int_{\omega_1} D_{\rho h}(a^{\alpha\beta\sigma\tau}\sqrt{a}) E_{\rho h}\left(\gamma_{\sigma\tau}(\varphi\bm{\zeta}^\varepsilon_\kappa)\right) \gamma_{\alpha \beta}(D_{\rho h}(\varphi\bm{\zeta}^\varepsilon_\kappa)) \dd y\\
&\le C\varepsilon(1+\|D_{\rho h}(\varphi\bm{\zeta}^\varepsilon_\kappa)\|_{H^1(\omega_1)\times H^1(\omega_1)\times L^2(\omega_1)})\\
&\quad+\varepsilon \left(\max_{\alpha,\beta,\sigma,\tau \in \{1,2\}}\{\|a^{\alpha\beta\sigma\tau}\sqrt{a}\|_{\mathcal{C}^1(\overline{\omega})}\}\right) \left(\max_{\alpha,\beta\in\{1,2\}}\|\gamma_{\alpha\beta}(D_{\rho h}(\varphi\bm{\zeta}^\varepsilon_\kappa)\|_{L^2(\omega_1)}\right) \left(\max_{\sigma,\tau\in\{1,2\}}\|E_{\rho h}(\gamma_{\sigma\tau}(\varphi \bm{\zeta}^\varepsilon_\kappa))\|_{L^2(\omega_1)}\right)\\
&= C\varepsilon(1+\|D_{\rho h}(\varphi\bm{\zeta}^\varepsilon_\kappa)\|_{H^1(\omega_1)\times H^1(\omega_1)\times L^2(\omega_1)})\\
&\quad+\varepsilon \left(\max_{\alpha,\beta,\sigma,\tau \in \{1,2\}}\{\|a^{\alpha\beta\sigma\tau}\sqrt{a}\|_{\mathcal{C}^1(\overline{\omega})}\}\right) \left(\max_{\alpha,\beta\in\{1,2\}}\|\gamma_{\alpha\beta}(D_{\rho h}(\varphi\bm{\zeta}^\varepsilon_\kappa)\|_{L^2(\omega_1)}\right) \left(\max_{\sigma,\tau\in\{1,2\}}\|\gamma_{\sigma\tau}(\varphi \bm{\zeta}^\varepsilon_\kappa)\|_{L^2(\omega_0)}\right)\\
&= C\varepsilon(1+\|D_{\rho h}(\varphi\bm{\zeta}^\varepsilon_\kappa)\|_{H^1(\omega_1)\times H^1(\omega_1)\times L^2(\omega_1)})\\
&\quad+\varepsilon \left(\max_{\alpha,\beta,\sigma,\tau \in \{1,2\}}\{\|a^{\alpha\beta\sigma\tau}\sqrt{a}\|_{\mathcal{C}^1(\overline{\omega})}\}\right) 
\|D_{\rho h}(\varphi\bm{\zeta}^\varepsilon_\kappa)\|_{H^1(\omega_1)\times H^1(\omega_1)\times L^2(\omega_1)}
\|\bm{\zeta}^\varepsilon_\kappa\|_{H^1(\omega)\times H^1(\omega)\times L^2(\omega)}\\
&\le C\varepsilon(1+\|D_{\rho h}(\varphi\bm{\zeta}^\varepsilon_\kappa)\|_{H^1(\omega_1)\times H^1(\omega_1)\times L^2(\omega_1)}),
\end{aligned}
\end{equation*}
where, once again, the constant $C>0$ is independent of $\varepsilon$, $\kappa$ and $h$. The latter computations summarize in the following result
\begin{equation}
\label{checkpoint-1}
\begin{aligned}
&\dfrac{\varepsilon\sqrt{a_0}}{c_0 c_e}\|D_{\rho h}(\varphi \bm{\zeta}^\varepsilon_\kappa)\|_{H^1(\omega_1)\times H^1(\omega_1)\times L^2(\omega_1)}^2
+\dfrac{\varepsilon}{\kappa}\int_{\omega_1}\bm{\beta}(\bm{\zeta}^\varepsilon_\kappa) \cdot (-\varphi \delta_{\rho h}(\varphi\bm{\zeta}^\varepsilon_\kappa)) \dd y\\
&\le C\varepsilon(1+\|D_{\rho h}(\varphi\bm{\zeta}^\varepsilon_\kappa)\|_{H^1(\omega_1)\times H^1(\omega_1)\times L^2(\omega_1)}),
\end{aligned}
\end{equation}
for some constant $C>0$ is independent of $\varepsilon$, $\kappa$ and $h$.

Let us now estimate the penalty term. Thanks to the equations of Problem~\ref{problem1}, we have that
\begin{equation*}
\label{bdd-1}
\dfrac{\varepsilon}{\kappa}\int_{\omega}\bm{\beta}(\bm{\zeta}^\varepsilon_\kappa) \cdot\bm{\eta} \dd y=-\varepsilon\int_{\omega}a^{\alpha\beta\sigma\tau}\gamma_{\sigma \tau}(\bm{\zeta}^\varepsilon_\kappa) \gamma_{\alpha\beta}(\bm{\eta}) \sqrt{a} \dd y+\int_{\omega}p^{i,\varepsilon} \eta_i\sqrt{a} \dd y,\quad\textup{ for all }\bm{\eta}=(\eta_i) \in \bm{V}_M(\omega).
\end{equation*}

An application of the triangle inequality and the continuity of the components $\gamma_{\alpha\beta}$ of the linearized change of metric tensor gives
\begin{equation*}
\left|\dfrac{\varepsilon}{\kappa}\int_{\omega}\bm{\beta}(\bm{\zeta}^\varepsilon_\kappa) \cdot\bm{\eta} \dd y\right|\le \varepsilon \left(\max_{\alpha,\beta,\sigma,\tau \in \{1,2\}}\|a^{\alpha\beta\sigma\tau}\|_{\mathcal{C}^0(\overline{\omega})}\right)\|\bm{\zeta}^\varepsilon_\kappa\|_{\bm{V}_M(\omega)}\|\bm{\eta}\|_{\bm{V}_M(\omega)} \sqrt{a}_1+\varepsilon\|\bm{p}\|_{\bm{L}^2(\omega)}\|\bm{\eta}\|_{\bm{L}^2(\omega)}\sqrt{a_1},
\end{equation*}
for all $\bm{\eta}=(\eta_i) \in\bm{V}_M(\omega)$.

Passing to the supremum over all the vector fields $\bm{\eta}=(\eta_i) \in\bm{V}_M(\omega)$ with $\|\bm{\eta}\|_{\bm{V}_M(\omega)}=1$ gives
\begin{equation*}
\sup_{\substack{\bm{\eta}\in\bm{V}_M(\omega)\\\|\bm{\eta}\|_{\bm{V}_M(\omega)}=1}} \left|\dfrac{1}{\kappa}\int_{\omega}\bm{\beta}(\bm{\zeta}^\varepsilon_\kappa) \cdot\bm{\eta} \dd y\right|
\le \sqrt{a_1}\left(\left(\max_{\alpha,\beta,\sigma,\tau \in \{1,2\}}\|a^{\alpha\beta\sigma\tau}\|_{\mathcal{C}^0(\overline{\omega})}\right)\|\bm{\zeta}^\varepsilon_\kappa\|_{\bm{V}_M(\omega)}+\|\bm{p}\|_{\bm{L}^2(\omega)}\right),
\end{equation*}
where, by Theorem~\ref{ex-un-kappa}, the right hand side is bounded independently of $\varepsilon$ and $\kappa$. In conclusion, we have shown that there exists a constant $M_1>0$ independent of $\varepsilon$ and $\kappa$ (and clearly $h$) such that:
\begin{equation}
\label{bdd-2}
\dfrac{1}{\kappa}\|\bm{\beta}(\bm{\zeta}^\varepsilon_\kappa)\|_{\bm{V}'_M(\omega)} \le M_1.
\end{equation}

The fact that we identified $L^2(\omega)$ with its dual, the assumption $\min_{y \in \overline{\omega}}(\bm{a}^3\cdot\bm{q})>0$, and~\eqref{bdd-2} give
\begin{equation*}
\begin{aligned}
M_1&\ge\dfrac{1}{\kappa}\|\bm{\beta}(\bm{\zeta}^\varepsilon_\kappa)\|_{\bm{V}'_M(\omega)}
=\dfrac{1}{\kappa}\Bigg\{\left\|-\{(\bm{\theta}+\zeta^\varepsilon_{\kappa,j}\bm{a}^j)\cdot\bm{q}\}^{-}\left(\dfrac{\bm{a}^1\cdot\bm{q}}{\sqrt{\sum_{\ell=1}^3|\bm{a}^\ell\cdot\bm{q}|^2}}\right)\right\|_{H^{-1}(\omega)}^2\\
&\quad+\left\|-\{(\bm{\theta}+\zeta^\varepsilon_{\kappa,j}\bm{a}^j)\cdot\bm{q}\}^{-}\left(\dfrac{\bm{a}^2\cdot\bm{q}}{\sqrt{\sum_{\ell=1}^3|\bm{a}^\ell\cdot\bm{q}|^2}}\right)\right\|_{H^{-1}(\omega)}^2\\
&\quad+\left\|-\{(\bm{\theta}+\zeta^\varepsilon_{\kappa,j}\bm{a}^j)\cdot\bm{q}\}^{-}\left(\dfrac{\bm{a}^3\cdot\bm{q}}{\sqrt{\sum_{\ell=1}^3|\bm{a}^\ell\cdot\bm{q}|^2}}\right)\right\|_{L^{2}(\omega)}^2
\Bigg\}^{1/2}\\
&\ge\dfrac{1}{\kappa} \left\|-\{(\bm{\theta}+\zeta^\varepsilon_{\kappa,j}\bm{a}^j)\cdot\bm{q}\}^{-}\left(\dfrac{\bm{a}^3\cdot\bm{q}}{\sqrt{\sum_{\ell=1}^3|\bm{a}^\ell\cdot\bm{q}|^2}}\right)\right\|_{L^{2}(\omega)}\\
&\ge\dfrac{\left(\min_{y \in \overline{\omega}}(\bm{a}^3\cdot\bm{q})\right)}{\kappa\sqrt{3\max\left\{\|\bm{a}^\ell \cdot\bm{q}\|_{\mathcal{C}^0(\overline{\omega})}^2;1\le\ell\le 3\right\}}}
\left(\int_{\omega} |-\{(\bm{\theta}+\zeta^\varepsilon_{\kappa,j}\bm{a}^j)\cdot\bm{q}\}^{-}|^2 \dd y\right)^{1/2}\\
&\ge \dfrac{\left(\min_{y \in \overline{\omega}}(\bm{a}^3\cdot\bm{q})\right)}{\kappa\sqrt{3\max\left\{\|\bm{a}^\ell \cdot\bm{q}\|_{\mathcal{C}^0(\overline{\omega})}^2;1\le\ell\le 3\right\}}} \|-\{(\bm{\theta}+\zeta^\varepsilon_{\kappa,j}\bm{a}^j)\cdot\bm{q}\}^{-}\|_{L^2(\omega)},
\end{aligned}
\end{equation*}
so that we have the following estimate:
\begin{equation}
\label{bdd-3}
\|-\{(\bm{\theta}+\zeta^\varepsilon_{\kappa,j}\bm{a}^j)\cdot\bm{q}\}^{-}\|_{L^2(\omega)}\le \kappa\dfrac{M_1\sqrt{3\max\left\{\|\bm{a}^\ell \cdot\bm{q}\|_{\mathcal{C}^0(\overline{\omega})}^2;1\le\ell\le 3\right\}}}{\left(\min_{y \in \overline{\omega}}(\bm{a}^3\cdot\bm{q})\right)}.
\end{equation}

Let us now evaluate the penalty term in the governing equations of Problem~\ref{problem2}. An application of formulas~\eqref{D+}, \eqref{D-}, \eqref{delta+}, Lemma~\ref{fdq-neg-part},  Lemma~\ref{geometry} and~\eqref{bdd-3} gives:
\begin{align*}
&\dfrac{1}{\kappa}\int_{\omega_1}\bm{\beta}(\bm{\zeta}^\varepsilon_\kappa) \cdot (-\varphi \delta_{\rho h}(\varphi \bm{\zeta}^\varepsilon_\kappa))\dd y =-\dfrac{1}{\kappa}\int_{\omega_1}\left[-\varphi\{(\bm{\theta}+\zeta^\varepsilon_{\kappa,j}\bm{a}^j)\cdot\bm{q}\}^{-}\left(\dfrac{\bm{a}^i\cdot\bm{q}}{\sqrt{\sum_{\ell=1}^3|\bm{a}^\ell\cdot\bm{q}|^2}}\right)\right]\delta_{\rho h}(\varphi\zeta^\varepsilon_{\kappa,i})\dd y\\
&=\dfrac{1}{\kappa}\int_{\omega_1}D_{\rho h}\left(-\varphi\{(\bm{\theta}+\zeta^\varepsilon_{\kappa,j}\bm{a}^j)\cdot\bm{q}\}^{-} \left(\dfrac{\bm{a}^i\cdot\bm{q}}{\sqrt{\sum_{\ell=1}^3|\bm{a}^\ell\cdot\bm{q}|^2}}\right)\right) D_{\rho h}(\varphi\zeta^\varepsilon_{\kappa,i})\dd y\\
&=\dfrac{1}{\kappa}\int_{\omega_1}\left[D_{\rho h}\left(-\{(\bm{\theta}+\zeta^\varepsilon_{\kappa,j}\bm{a}^j)\cdot\bm{q}\}^{-} \varphi\right) E_{\rho h}\left(\dfrac{\bm{a}^i\cdot\bm{q}}{\sqrt{\sum_{\ell=1}^3|\bm{a}^\ell\cdot\bm{q}|^2}}\right)\right] D_{\rho h}(\varphi\zeta^\varepsilon_{\kappa,i})\dd y\\
&\quad+\dfrac{1}{\kappa}\int_{\omega_1} \left[\left(-\{(\bm{\theta}+\zeta^\varepsilon_{\kappa,j}\bm{a}^j)\cdot\bm{q}\}^{-} \varphi\right) D_{\rho h}\left(\dfrac{\bm{a}^i\cdot\bm{q}}{\sqrt{\sum_{\ell=1}^3|\bm{a}^\ell\cdot\bm{q}|^2}}\right)\right] D_{\rho h}(\varphi\zeta^\varepsilon_{\kappa,i})\dd y\\
&=\dfrac{1}{\kappa}\int_{\omega_1}E_{\rho h}\left(\dfrac{1}{\sqrt{\sum_{\ell=1}^3|\bm{a}^\ell\cdot\bm{q}|^2}}\right)\left[D_{\rho h}\left(-\{(\bm{\theta}+\zeta^\varepsilon_{\kappa,j}\bm{a}^j)\cdot\bm{q}\}^{-} \varphi\right)\right] D_{\rho h}\left(\varphi \zeta^\varepsilon_{\kappa,i}\bm{a}^i\cdot\bm{q}\right)\dd y\\
&\quad-\dfrac{1}{\kappa}\int_{\omega_1}E_{\rho h}\left(\dfrac{1}{\sqrt{\sum_{\ell=1}^3|\bm{a}^\ell\cdot\bm{q}|^2}}\right)\left[D_{\rho h}\left(-\{(\bm{\theta}+\zeta^\varepsilon_{\kappa,j}\bm{a}^j)\cdot\bm{q}\}^{-} \varphi\right)\right] (\varphi \zeta^\varepsilon_{\kappa,i}) D_{\rho h}\left(\bm{a}^i\cdot\bm{q}\right)\dd y\\
&\quad+\dfrac{1}{\kappa}\int_{\omega_1} \left(-\{(\bm{\theta}+\zeta^\varepsilon_{\kappa,j}\bm{a}^j)\cdot\bm{q}\}^{-} \varphi\right)\left(D_{\rho h}\left(\dfrac{\bm{a}^i\cdot\bm{q}}{\sqrt{\sum_{\ell=1}^3|\bm{a}^\ell\cdot\bm{q}|^2}}\right) D_{\rho h}(\varphi\zeta^\varepsilon_{\kappa,i})\right)\dd y\\
&=\dfrac{1}{\kappa}\int_{\omega_1}E_{\rho h}\left(\dfrac{1}{\sqrt{\sum_{\ell=1}^3|\bm{a}^\ell\cdot\bm{q}|^2}}\right)\left[D_{\rho h}\left(-\{(\bm{\theta}+\zeta^\varepsilon_{\kappa,j}\bm{a}^j)\cdot\bm{q}\}^{-} \varphi\right)\right] D_{\rho h}\left(\varphi (\bm{\theta}+\zeta^\varepsilon_{\kappa,i}\bm{a}^i)\cdot\bm{q}\right)\dd y\\
&\quad-\dfrac{1}{\kappa}\int_{\omega_1}E_{\rho h}\left(\dfrac{1}{\sqrt{\sum_{\ell=1}^3|\bm{a}^\ell\cdot\bm{q}|^2}}\right)\left[D_{\rho h}\left(-\{(\bm{\theta}+\zeta^\varepsilon_{\kappa,j}\bm{a}^j)\cdot\bm{q}\}^{-} \varphi\right)\right] D_{\rho h}\left(\varphi \bm{\theta}\cdot\bm{q}\right)\dd y\\
&\quad+\dfrac{1}{\kappa}\int_{\omega_1}\left(-\{(\bm{\theta}+\zeta^\varepsilon_{\kappa,j}\bm{a}^j)\cdot\bm{q}\}^{-} \varphi\right) D_{-\rho h}\left(E_{\rho h}\left(\dfrac{1}{\sqrt{\sum_{\ell=1}^3|\bm{a}^\ell\cdot\bm{q}|^2}}\right)(\varphi \zeta^\varepsilon_{\kappa,i}) D_{\rho h}\left(\bm{a}^i\cdot\bm{q}\right)\right)\dd y\\
&\quad+\dfrac{1}{\kappa}\int_{\omega_1} \left(-\{(\bm{\theta}+\zeta^\varepsilon_{\kappa,j}\bm{a}^j)\cdot\bm{q}\}^{-} \varphi\right)\left(D_{\rho h}\left(\dfrac{\bm{a}^i\cdot\bm{q}}{\sqrt{\sum_{\ell=1}^3|\bm{a}^\ell\cdot\bm{q}|^2}}\right) D_{\rho h}(\varphi\zeta^\varepsilon_{\kappa,i})\right)\dd y\\
&=\dfrac{1}{\kappa}\int_{\omega_1} E_{\rho h}\left(\dfrac{1}{\sqrt{\sum_{\ell=1}^3|\bm{a}^\ell\cdot\bm{q}|^2}}\right)\left[D_{\rho h}\left(-\{(\bm{\theta}+\zeta^\varepsilon_{\kappa,j}\bm{a}^j)\cdot\bm{q}\}^{-} \varphi\right)\right] D_{\rho h}\left(\varphi \{(\bm{\theta}+\zeta^\varepsilon_{\kappa,i}\bm{a}^i)\cdot\bm{q}\}^{+}\right)\dd y\\
&\quad+\dfrac{1}{\kappa}\int_{\omega_1} E_{\rho h}\left(\dfrac{1}{\sqrt{\sum_{\ell=1}^3|\bm{a}^\ell\cdot\bm{q}|^2}}\right)\left|D_{\rho h}\left(-\{(\bm{\theta}+\zeta^\varepsilon_{\kappa,j}\bm{a}^j)\cdot\bm{q}\}^{-} \varphi\right)\right|^2\dd y\\
&\quad+\dfrac{1}{\kappa}\int_{\omega_1}(-\varphi \{(\bm{\theta}+\zeta^\varepsilon_{\kappa,j}\bm{a}^j)\cdot\bm{q}\}^{-}) D_{-\rho h}\left[E_{\rho h}\left(\dfrac{1}{\sqrt{\sum_{\ell=1}^3|\bm{a}^\ell\cdot\bm{q}|^2}}\right) D_{\rho h}(\varphi \bm{\theta}\cdot\bm{q})\right] \dd y\\
&\quad+\dfrac{1}{\kappa}\int_{\omega_1}(-\varphi \{(\bm{\theta}+\zeta^\varepsilon_{\kappa,j}\bm{a}^j)\cdot\bm{q}\}^{-}) D_{-\rho h}\left[E_{\rho h}\left(\dfrac{1}{\sqrt{\sum_{\ell=1}^3|\bm{a}^\ell\cdot\bm{q}|^2}}\right) \left(D_{\rho h}(\bm{a}^i\cdot\bm{q})\right) (\varphi\zeta^\varepsilon_{\kappa,i})\right] \dd y\\
&\quad+\dfrac{1}{\kappa}\int_{\omega_1} \left(-\{(\bm{\theta}+\zeta^\varepsilon_{\kappa,j}\bm{a}^j)\cdot\bm{q}\}^{-} \varphi\right)\left(D_{\rho h}\left(\dfrac{\bm{a}^i\cdot\bm{q}}{\sqrt{\sum_{\ell=1}^3|\bm{a}^\ell\cdot\bm{q}|^2}}\right) D_{\rho h}(\varphi\zeta^\varepsilon_{\kappa,i})\right)\dd y.
\end{align*}

Applying the latter computations, \eqref{bdd-3}, the fact that $\bm{\theta}\in\mathcal{C}^3(\overline{\omega};\mathbb{E}^3)$, Lemma~\ref{fdq-neg-part}, Lemma~\ref{geometry}, the assumption according to which $\min_{y \in \overline{\omega}}(\bm{a}^3\cdot\bm{q})>0$ and the fact that $\textup{supp }\varphi \subset\subset \omega_1$ to~\eqref{checkpoint-1} gives:
\begin{align*}
	&\dfrac{\varepsilon\sqrt{a_0}}{c_0 c_e}\|D_{\rho h}(\varphi \bm{\zeta}^\varepsilon_\kappa)\|_{H^1(\omega_1)\times H^1(\omega_1)\times L^2(\omega_1)}^2\\
	&\quad+\varepsilon\dfrac{\left(3\max\{\|\tilde{\bm{a}}^\ell\cdot\bm{q}\|_{\mathcal{C}^0(\overline{\tilde{\omega}})}^2;1\le \ell \le 3\}\right)^{-1/2}}{\kappa}\int_{\omega_1}\left|D_{\rho h}\left(-\{(\bm{\theta}+\zeta^\varepsilon_{\kappa,j}\bm{a}^j)\cdot\bm{q}\}^{-} \varphi\right)\right|^2\dd y\\
	&\le C\varepsilon(1+\|D_{\rho h}(\varphi\bm{\zeta}^\varepsilon_\kappa)\|_{H^1(\omega_1)\times H^1(\omega_1)\times L^2(\omega_1)})\\
	&\quad-\dfrac{\varepsilon}{\kappa}\int_{\omega_1}(-\varphi \{(\bm{\theta}+\zeta^\varepsilon_{\kappa,j}\bm{a}^j)\cdot\bm{q}\}^{-}) D_{-\rho h}\left[E_{\rho h}\left(\dfrac{1}{\sqrt{\sum_{\ell=1}^3|\bm{a}^\ell\cdot\bm{q}|^2}}\right) D_{\rho h}(\varphi \bm{\theta}\cdot\bm{q})\right] \dd y\\
	&\quad-\dfrac{\varepsilon}{\kappa}\int_{\omega_1}(-\varphi \{(\bm{\theta}+\zeta^\varepsilon_{\kappa,j}\bm{a}^j)\cdot\bm{q}\}^{-}) D_{-\rho h}\left[E_{\rho h}\left(\dfrac{1}{\sqrt{\sum_{\ell=1}^3|\bm{a}^\ell\cdot\bm{q}|^2}}\right) \left(D_{\rho h}(\bm{a}^i\cdot\bm{q})\right) (\varphi\zeta^\varepsilon_{\kappa,i})\right] \dd y\\
	&\quad-\dfrac{\varepsilon}{\kappa}\int_{\omega_1} \left(-\{(\bm{\theta}+\zeta^\varepsilon_{\kappa,j}\bm{a}^j)\cdot\bm{q}\}^{-} \varphi\right)\left(D_{\rho h}\left(\dfrac{\bm{a}^i\cdot\bm{q}}{\sqrt{\sum_{\ell=1}^3|\bm{a}^\ell\cdot\bm{q}|^2}}\right) D_{\rho h}(\varphi\zeta^\varepsilon_{\kappa,i})\right)\dd y\\
	&\le C\varepsilon(1+\|D_{\rho h}(\varphi\bm{\zeta}^\varepsilon_\kappa)\|_{H^1(\omega_1)\times H^1(\omega_1)\times L^2(\omega_1)}),
\end{align*}
for some constant $C>0$ independent of $\varepsilon$, $\kappa$ and $h$. In conclusion, the latter computations can be summarized as follows:
\begin{equation}
\label{checkpoint-2}
\begin{aligned}
&\dfrac{\varepsilon\sqrt{a_0}}{c_0 c_e}\|D_{\rho h}(\varphi \bm{\zeta}^\varepsilon_\kappa)\|_{H^1(\omega_1)\times H^1(\omega_1)\times L^2(\omega_1)}^2\\
&\quad+\varepsilon\dfrac{\left(3\max\{\|\tilde{\bm{a}}^\ell\cdot\bm{q}\|_{\mathcal{C}^0(\overline{\tilde{\omega}})}^2;1\le \ell \le 3\}\right)^{-1/2}}{\kappa}\int_{\omega_1}\left|D_{\rho h}\left(-\{(\bm{\theta}+\zeta^\varepsilon_{\kappa,j}\bm{a}^j)\cdot\bm{q}\}^{-} \varphi\right)\right|^2\dd y\\
&\le C\varepsilon(1+\|D_{\rho h}(\varphi\bm{\zeta}^\varepsilon_\kappa)\|_{H^1(\omega_1)\times H^1(\omega_1)\times L^2(\omega_1)}).
\end{aligned}
\end{equation}

A consequence of~\eqref{checkpoint-2} is that
\begin{equation}
\label{conclusion-1}
\dfrac{\sqrt{a_0}}{c_0 c_e}\|D_{\rho h}(\varphi \bm{\zeta}^\varepsilon_\kappa)\|_{H^1(\omega_1)\times H^1(\omega_1)\times L^2(\omega_1)}^2
-C\|D_{\rho h}(\varphi\bm{\zeta}^\varepsilon_\kappa)\|_{H^1(\omega_1)\times H^1(\omega_1)\times L^2(\omega_1)}-C\le 0.
\end{equation}

Regarding $\|D_{\rho h}(\varphi \bm{\zeta}^\varepsilon_\kappa)\|_{H^1(\omega_1)\times H^1(\omega_1)\times L^2(\omega_1)}$ as the variable of the corresponding second-degree polynomial $\frac{\sqrt{a_0}}{c_0 c_e} x^2 -C x -C$, we have that its discriminant is positive. Therefore, we have that the inequality~\eqref{conclusion-1} is satisfied for
\begin{equation}
	\label{conclusion-2}
0\le \|D_{\rho h}(\varphi \bm{\zeta}^\varepsilon_\kappa)\|_{H^1(\omega_1)\times H^1(\omega_1)\times L^2(\omega_1)} \le \dfrac{C+\sqrt{C^2+4\frac{C\sqrt{a_0}}{c_0 c_e}}}{\frac{2 \sqrt{a_0}}{c_0 c_e}},
\end{equation}
where the upper bound is independent of $\varepsilon$, $\kappa$ and $h$. Applying~\eqref{conclusion-2} to~\eqref{checkpoint-2} gives that
\begin{equation}
\label{conclusion-3}
\dfrac{1}{\kappa}\left\|D_{\rho h}\left(-\{(\bm{\theta}+\zeta^\varepsilon_{\kappa,j}\bm{a}^j)\cdot\bm{q}\}^{-} \varphi\right)\right\|_{L^2(\omega_1)}^2 \le C,
\end{equation}
for some $C>0$ independent of $\varepsilon$, $\kappa$ and $h$.

An application of Theorem~3 of Section~5.8.2 of~\cite{Evans2010}, together with the fact that $\varphi$ in a way such that its support has nonempty interior in $\omega$ and that there exists a nonzero measure set $U\subset \textup{supp }\varphi$ such that $\varphi\equiv 1$ in $U$ shows that the sequence $\{\bm{\zeta}^\varepsilon_\kappa\}_{\kappa>0}$ is bounded in $H^2_{\textup{loc}}(\omega) \times H^2_{\textup{loc}}(\omega) \times H^1_{\textup{loc}}(\omega)$ independently of $\kappa$ as well as that $\{(\bm{\theta}+\zeta^\varepsilon_{\kappa,j}\bm{a}^j)\cdot\bm{q}\}^{-} \in H^1_{\textup{loc}}(\omega)$, and
\begin{equation*}
\|-\{(\bm{\theta}+\zeta^\varepsilon_{\kappa,j}\bm{a}^j)\cdot\bm{q}\}^{-}\|_{H^1(U)} \le C\sqrt{\kappa}.
\end{equation*}

Exploiting the fact that $(\bm{a}^i\cdot\bm{q}) \in \mathcal{C}^1(\overline{\omega})$ for all $1\le i \le 3$ and the assumption $\min_{y \in \overline{\omega}}(\bm{a}^3\cdot\bm{q})>0$, we have that an application of the product rule in Sobolev spaces (cf., e.g., Proposition~9.4 of~\cite{Brez11}) together with~\eqref{conclusion-3} implies that each component of the vector field $\bm{\beta}(\varphi\bm{\zeta}^\varepsilon_\kappa)$ is of class $H^1_{\textup{loc}}(\omega)$ and that the following estimate holds
\begin{equation*}
\label{conclusion-4}
\|\bm{\beta}(\bm{\zeta}^\varepsilon_\kappa)\|_{\bm{H}^1(U)}^2\le C\kappa,
\end{equation*}
for some $C>0$ independent of $\varepsilon$, $\kappa$ and $h$. This completes the proof.
\end{proof}

As a remark, we observe that the higher regularity of the negative part of the constraint has been established without resorting by any means to Stampacchia's theorem~\cite{Stampacchia1965}. Moreover, we showed that the negative part approaches zero as $\kappa \to 0^+$ \emph{more rapidly} than what inferred in the energy estimates in Theorem~\ref{ex-un-kappa}.

A straightforward consequence of~\eqref{conclusion-3} is that
\begin{equation*}
\bm{\zeta}^\varepsilon_\kappa \rightharpoonup \bm{\zeta}^\varepsilon,\quad\textup{ in }H^2(\omega_1) \times H^2(\omega_1) \times H^1(\omega_1) \textup{ as }\kappa\to0^+, 
\end{equation*}
thus showing an alternative proof of the interior regularity for the solution of Problem~\ref{problem1} \emph{without} resorting, as it was instead done in~\cite{Pie-2022-interior}, to the ``density property'' recalled in Theorem~\ref{density} (although in the proof of Theorem~\ref{aug:int} we exploited the \emph{sufficient conditions} ensuring the validity of the ``density property'') and \emph{without} assuming additional regularity for the tangential components of $\bm{p}^\varepsilon$.

The result established in Theorem~\ref{aug:int} actually shows that the solution of Problem~\ref{problem1} is the weak limit of the sequence of solutions of Problem~\ref{problem2} in the space $H^2(\omega_1) \times H^2(\omega_1) \times H^1(\omega_1)$.

Let us now show that the solution $\bm{\zeta}^\varepsilon_\kappa$ of Problem~\ref{problem2} enjoys the higher regularity established in Theorem~\ref{aug:int} up to the boundary of the domain $\omega$. In order to establish this result, we will need to make the assumption that the solution $\bm{\zeta}^\varepsilon_\kappa$ of Problem~\ref{problem2} does not violate the constraint under consideration near the boundary of the integration domain $\omega$. This assumption is physically feasible, since this limit model is derived as a result of asymptotic analyses of models whose solutions have vanishing trace along the boundary (cf.~\cite{CiaPie2018b,CiaPie2018bCR,CiaMarPie2018b,CiaMarPie2018}).

\begin{theorem}
\label{aug:bdry}
Assume that the boundary $\gamma$ of the domain $\omega$ is of class $\mathcal{C}^4$ and that the immersion $\bm{\theta}$ is of class $\mathcal{C}^4(\overline{\omega};\mathbb{E}^3)$.
Assume that there exists a unit-norm vector $\bm{q} \in \mathbb{E}^3$ such that
\begin{equation*}
\min_{y \in \overline{\omega}} (\bm{\theta} (y) \cdot \bm{q}) > 0 \quad
\textup{ and } \quad
\min_{y \in \overline{\omega}} (\bm{a}_3 (y) \cdot \bm{q}) > 0.
\end{equation*}

Assume also that the vector field $\bm{f}^\varepsilon=(f^{i,\varepsilon})$ defining the applied body force density is such that $\bm{p}^\varepsilon=(p^{i,\varepsilon}) \in L^2(\omega) \times L^2(\omega) \times H^1(\omega)$. Define $\bm{H}(\omega):=H^2(\omega) \times H^2(\omega) \times H^1(\omega)$.

Finally, assume that the solution $\bm{\zeta}^\varepsilon_\kappa$ of Problem~\ref{problem2} is such that there exists a neighbourhood $U\subset \gamma$ independent of $\varepsilon$ and $\kappa$ such that
\begin{equation}
\label{conclusion-5.5}
\bm{\beta}(\bm{\zeta}^\varepsilon_\kappa) =\bm{0} \textup{ for a.a. points in } U \cap \omega. 
\end{equation}

Then, the solution $\bm{\zeta}^\varepsilon_\kappa=(\zeta^\varepsilon_{\kappa,i})$ of Problem~\ref{problem2} is of class $\bm{V}_M(\omega)\cap \bm{H}(\omega)$.
\end{theorem}
\begin{proof}
Let $\bm{\zeta}^\varepsilon_\kappa=(\zeta^\varepsilon_{\kappa,i})$ be the solution of Problem~\ref{problem2}.
Combining the assumption according to which $\bm{\beta}(\bm{\zeta}^\varepsilon_\kappa)=\bm{0}$ for a.a. points in $U\cap \omega$ with the conclusion of Theorem~\ref{aug:int} according to which $\bm{\beta}(\bm{\zeta}^\varepsilon_\kappa) \in \bm{H}^1_{\textup{loc}}(\omega)$, we straightforwardly infer that $\bm{\beta}(\bm{\zeta}^\varepsilon_\kappa) \in \bm{H}^1_0(\omega)$.

Keeping in mind the boundary value problem~\eqref{BVP} we recovered beforehand, we apply the elliptic augmentation of regularity argument near the boundary proposed in~\cite{Genevey1996} after observing that:
$$
\left(p^{3,\varepsilon}-\dfrac{\varepsilon}{\kappa\sqrt{a}}\beta_3(\bm{\zeta}^\varepsilon_\kappa)\right) \in H^1(\omega).
$$

This completes the proof.
\end{proof}

The boundary value problem recovered in~\eqref{BVP} enters, in the same spirit of Theorem~4 on page~334 of~\cite{Evans2010}, the proof of Theorem~\ref{aug:bdry} to show the augmented regularity in the nearness of a flat boundary for the \emph{reduced problem}.

As a remark, we observe that an application of Theorem~\ref{aug:int} and Theorem~\ref{aug:bdry} gives
\begin{equation}
	\label{conclusion-5}
	\bm{\zeta}^\varepsilon_\kappa \rightharpoonup \bm{\zeta}^\varepsilon,\quad\textup{ in }H^2(\omega)\times H^2(\omega) \times H^1(\omega) \textup{ as }\kappa \to 0^+,
\end{equation}
where we recall that $\bm{\zeta}^\varepsilon$ is the solution of Problem~\ref{problem1}.

Furthermore, the estimate~\eqref{conclusion-2} can be extended up to the boundary, so that, exploiting the compactness of $\overline{\omega}$ gives
\begin{equation}
	\label{conclusion-6}
	0\le \|\bm{\zeta}^\varepsilon_\kappa\|_{H^2(\omega)\times H^2(\omega)\times H^1(\omega)} \le \dfrac{C+\sqrt{C^2+4\frac{C\sqrt{a_0}}{c_0 c_e}}}{\frac{2 \sqrt{a_0}}{c_0 c_e}},
\end{equation}
for some $C>0$ independent of $\varepsilon$, $\kappa$ and $h$. Combining the lower semicontinuity of $\|\cdot\|_{H^2(\omega)\times H^2(\omega)\times H^1(\omega)}$ with~\eqref{conclusion-5} and~\eqref{conclusion-6} gives that
\begin{equation}
\label{conclusion-7}
\|\bm{\zeta}^\varepsilon\|_{H^2(\omega)\times H^2(\omega)\times H^1(\omega)} \le \liminf_{\kappa\to 0^+}\|\bm{\zeta}^\varepsilon_\kappa\|_{H^2(\omega)\times H^2(\omega)\times H^1(\omega)} \le \dfrac{C+\sqrt{C^2+4\frac{C\sqrt{a_0}}{c_0 c_e}}}{\frac{2 \sqrt{a_0}}{c_0 c_e}},
\end{equation}
thus asserting that the solution of Problem~\ref{problem1} is of class $\bm{H}(\omega)=H^2(\omega)\times H^2(\omega)\times H^1(\omega)$ and which is bounded in $\bm{H}(\omega)$ independently of $\varepsilon$.

The results established in Theorem~\ref{aug:int} and Theorem~\ref{aug:bdry} actually improve Theorem~\ref{ex-un-kappa} as the solution of Problem~\ref{problem1} is proved to be the weak limit of the sequence of solutions of Problem~\ref{problem2} in the space $H^2(\omega) \times H^2(\omega) \times H^1(\omega)$.

Finally, we recall that the augmentation of regularity up to the boundary holds for domains with Lipschitz continuous boundary provided that $\omega$ is convex (viz. \cite{Eggleston1958} and~\cite{Grisvard2011}).

\section{Approximation of the solution of Problem~\ref{problem1} via the Penalty Method}
\label{approx:original}

In this section, we exploit the augmentation of regularity established in Theorem~\ref{aug:int}, Theorem~\ref{aug:bdry} as well as the subsequent remarks to sharpen the convergence~\eqref{beta-5} obtained as a result of Theorem~\ref{ex-un-kappa}. 

\begin{theorem}
\label{th:beta-6}
Let $\kappa>0$ be given.
Let $\bm{\zeta}^\varepsilon \in \bm{V}_M(\omega) \cap \bm{H}(\omega)$ be the solution of Problem~\ref{problem1} and let $\bm{\zeta}^\varepsilon_\kappa \in \bm{V}_M(\omega) \cap \bm{H}(\omega)$ be the solution of Problem~\ref{problem2}. Then, there exists a constant $C>0$ independent of $\varepsilon$ and $\kappa$ such that
\begin{equation*}
\|\bm{\zeta}^\varepsilon-\bm{\zeta}^\varepsilon_\kappa\|_{\bm{V}_M(\omega)}\le C \sqrt{\kappa}.
\end{equation*}
\end{theorem}
\begin{proof}
For each $\bm{\eta}\in \bm{L}^2(\omega)$, define
\begin{equation*}
\tilde{\bm{\beta}}(\bm{\eta}):=\left(-\{(\bm{\theta}+\eta_j\bm{a}^j)\cdot\bm{q}\}^{-}\left(\dfrac{\bm{a}^i\cdot\bm{q}}{\sum_{\ell=1}^3|\bm{a}^\ell\cdot\bm{q}|^2}\right)\right)_{i=1}^3.
\end{equation*}

Define $P(\bm{\zeta}^\varepsilon_\kappa):=\bm{\zeta}^\varepsilon_\kappa-\tilde{\bm{\beta}}(\bm{\zeta}^\varepsilon_\kappa)$, and observe that $P(\bm{\zeta}^\varepsilon_\kappa) \in\bm{U}_M(\omega)$. Indeed, a direct computation gives
\begin{equation*}
\begin{aligned}
&\left(\bm{\theta}+\left[\zeta^\varepsilon_{\kappa,i}-\dfrac{-\{(\bm{\theta}+\zeta^\varepsilon_{\kappa,i}\bm{a}^i)\cdot\bm{q}\}^{-}(\bm{a}^i\cdot\bm{q})}{\sum_{\ell=1}^3|\bm{a}^\ell\cdot\bm{q}|^2}\right]\bm{a}^i\right)\cdot\bm{q}
=((\bm{\theta}+\zeta^\varepsilon_{\kappa,i}\bm{a}^i)\cdot\bm{q})+\{(\bm{\theta}+\zeta^\varepsilon_{\kappa,i}\bm{a}^i)\cdot\bm{q}\}^{-}\\
&=\{(\bm{\theta}+\zeta^\varepsilon_{\kappa,i}\bm{a}^i)\cdot\bm{q}\}^{+}\ge0,
\end{aligned}
\end{equation*}
thus proving the claim.
Let us estimate
\begin{equation*}
\label{est-1}
\|\bm{\zeta}^\varepsilon_\kappa-\bm{\zeta}^\varepsilon\|_{\bm{V}_M(\omega)} \le \|\tilde{\bm{\beta}}(\bm{\zeta}^\varepsilon_\kappa)\|_{\bm{H}^1(\omega)}
+\|\bm{\zeta}^\varepsilon_\kappa-\tilde{\bm{\beta}}(\bm{\zeta}^\varepsilon_\kappa)-\bm{\zeta}^\varepsilon\|_{\bm{V}_M(\omega)} \le C \sqrt{\kappa}+\|\bm{\zeta}^\varepsilon_\kappa-\tilde{\bm{\beta}}(\bm{\zeta}^\varepsilon_\kappa)-\bm{\zeta}^\varepsilon\|_{\bm{V}_M(\omega)},
\end{equation*}
where the latter inequality holds thanks to~\eqref{conclusion-5.5}. Since $P(\bm{\zeta}^\varepsilon_\kappa)\in \bm{U}_M(\omega)$, an application of the uniform positive definiteness of the fourth order two-dimensional elasticity tensor $(a^{\alpha\beta\sigma\tau})$ (Theorem~3.1-1 of~\cite{Ciarlet2000}), Korn's inequality (Theorem~\ref{korn}) and~\eqref{conclusion-5.5} gives
\begin{align*}
&\dfrac{\varepsilon\sqrt{a_0}}{c_e c_0}\|P(\bm{\zeta}^\varepsilon_\kappa)-\bm{\zeta}^\varepsilon\|_{\bm{V}_M(\omega)}
\le \varepsilon\int_{\omega}a^{\alpha\beta\sigma\tau} \gamma_{\sigma \tau}(P(\bm{\zeta}^\varepsilon_\kappa)-\bm{\zeta}^\varepsilon) \gamma_{\alpha\beta}(P(\bm{\zeta}^\varepsilon_\kappa)-\bm{\zeta}^\varepsilon) \sqrt{a} \dd y\\
&\le-\int_{\omega} \bm{p}^{\varepsilon} \cdot (P(\bm{\zeta}^\varepsilon_\kappa)-\bm{\zeta}^\varepsilon) \sqrt{a} \dd y
+\varepsilon\int_{\omega} a^{\alpha\beta\sigma\tau} \gamma_{\sigma \tau}(P(\bm{\zeta}^\varepsilon_\kappa)) \gamma_{\alpha\beta}(P(\bm{\zeta}^\varepsilon_\kappa)-\bm{\zeta}^\varepsilon) \sqrt{a} \dd y\\
&=-\int_{\omega} \bm{p}^{\varepsilon} \cdot (P(\bm{\zeta}^\varepsilon_\kappa)-\bm{\zeta}^\varepsilon) \sqrt{a} \dd y
-\dfrac{\varepsilon}{\kappa} \int_{\omega} \bm{\beta}(\bm{\zeta}^\varepsilon_\kappa) \cdot (P(\bm{\zeta}^\varepsilon_\kappa)-\bm{\zeta}^\varepsilon) \dd y
+\int_{\omega} \bm{p}^{\varepsilon} \cdot (P(\bm{\zeta}^\varepsilon_\kappa)-\bm{\zeta}^\varepsilon) \sqrt{a} \dd y\\
&\quad-\varepsilon\int_{\omega}a^{\alpha\beta\sigma\tau} \gamma_{\sigma \tau}(\tilde{\bm{\beta}}(\bm{\zeta}^\varepsilon_\kappa)) \gamma_{\alpha\beta}(P(\bm{\zeta}^\varepsilon_\kappa)-\bm{\zeta}^\varepsilon) \sqrt{a} \dd y\\
&=\dfrac{\varepsilon}{\kappa}\int_{\omega}\left(-\{(\bm{\theta}+\zeta^\varepsilon_{\kappa,j}\bm{a}^j)\cdot\bm{q}\}^{-}\right) \left( \dfrac{\zeta^\varepsilon_i\bm{a}^i\cdot\bm{q}}{\sqrt{\sum_{\ell=1}^3|\bm{a}^\ell\cdot\bm{q}|^2}}\right) \dd y
-\dfrac{\varepsilon}{\kappa}\int_{\omega}\left(-\{(\bm{\theta}+\zeta^\varepsilon_{\kappa,j}\bm{a}^j)\cdot\bm{q}\}^{-}\right) \left( \dfrac{\zeta^\varepsilon_{\kappa,i}\bm{a}^i\cdot\bm{q}}{\sqrt{\sum_{\ell=1}^3|\bm{a}^\ell\cdot\bm{q}|^2}}\right) \dd y\\
&\quad+\dfrac{\varepsilon}{\kappa}\int_{\omega} \left(-\{(\bm{\theta}+\zeta^\varepsilon_{\kappa,j}\bm{a}^j)\cdot\bm{q}\}^{-} \dfrac{\bm{a}^i\cdot\bm{q}}{\sqrt{\sum_{\ell=1}^3|\bm{a}^\ell\cdot\bm{q}|^2}}\right)_{i=1}^3 \cdot \left(-\{(\bm{\theta}+\zeta^\varepsilon_{\kappa,j}\bm{a}^j)\cdot\bm{q}\}^{-} \dfrac{\bm{a}^i\cdot\bm{q}}{\sum_{\ell=1}^3|\bm{a}^\ell\cdot\bm{q}|^2}\right)_{i=1}^3 \dd y\\
&\quad-\varepsilon\int_{\omega}a^{\alpha\beta\sigma\tau} \gamma_{\sigma \tau}(\tilde{\bm{\beta}}(\bm{\zeta}^\varepsilon_\kappa)) \gamma_{\alpha\beta}(P(\bm{\zeta}^\varepsilon_\kappa)-\bm{\zeta}^\varepsilon) \sqrt{a} \dd y\\
&\le-\dfrac{\varepsilon}{\kappa}\int_{\omega}\left(-\{(\bm{\theta}+\zeta^\varepsilon_{\kappa,j}\bm{a}^j)\cdot\bm{q}\}^{-}\right) \left( \dfrac{\bm{\theta}\cdot\bm{q}}{\sqrt{\sum_{\ell=1}^3|\bm{a}^\ell\cdot\bm{q}|^2}}\right) \dd y
+\dfrac{\varepsilon}{\kappa}\int_{\omega}\left(-\{(\bm{\theta}+\zeta^\varepsilon_{\kappa,j}\bm{a}^j)\cdot\bm{q}\}^{-}\right) \left( \dfrac{\bm{\theta}\cdot\bm{q}}{\sqrt{\sum_{\ell=1}^3|\bm{a}^\ell\cdot\bm{q}|^2}}\right) \dd y\\
&\quad-\dfrac{\varepsilon}{\kappa}\int_{\omega}\dfrac{|-\{(\bm{\theta}+\zeta^\varepsilon_{\kappa,j}\bm{a}^j)\cdot\bm{q}\}^{-}|^2}{\sqrt{\sum_{\ell=1}^3|\bm{a}^\ell\cdot\bm{q}|^2}} \dd y+\dfrac{\varepsilon}{\kappa}\int_{\omega}\dfrac{|-\{(\bm{\theta}+\zeta^\varepsilon_{\kappa,j}\bm{a}^j)\cdot\bm{q}\}^{-}|^2}{\sqrt{\sum_{\ell=1}^3|\bm{a}^\ell\cdot\bm{q}|^2}} \dd y\\
&\quad-\varepsilon\int_{\omega}a^{\alpha\beta\sigma\tau} \gamma_{\sigma \tau}(\tilde{\bm{\beta}}(\bm{\zeta}^\varepsilon_\kappa)) \gamma_{\alpha\beta}(P(\bm{\zeta}^\varepsilon_\kappa)-\bm{\zeta}^\varepsilon) \sqrt{a} \dd y\\
&=-\varepsilon\int_{\omega}a^{\alpha\beta\sigma\tau} \gamma_{\sigma \tau}(\tilde{\bm{\beta}}(\bm{\zeta}^\varepsilon_\kappa)) \gamma_{\alpha\beta}(P(\bm{\zeta}^\varepsilon_\kappa)-\bm{\zeta}^\varepsilon) \sqrt{a} \dd y\le M \varepsilon \|\tilde{\bm{\beta}}(\bm{\zeta}^\varepsilon_\kappa)\|_{\bm{H}^1_0(\omega)} \|P(\bm{\zeta}^\varepsilon_\kappa)-\bm{\zeta}^\varepsilon\|_{\bm{V}_M(\omega)} \sqrt{a_1}\\
&\le M C \sqrt{a_1} \varepsilon \kappa \|P(\bm{\zeta}^\varepsilon_\kappa)-\bm{\zeta}^\varepsilon\|_{\bm{V}_M(\omega)}.
\end{align*}

In conclusion, we have that
\begin{equation*}
\|P(\bm{\zeta}^\varepsilon_\kappa)-\bm{\zeta}^\varepsilon\|_{\bm{V}_M(\omega)} \le M C c_0 c_e\dfrac{\sqrt{a_1}}{\sqrt{a}_0} \kappa,
\end{equation*}
so that
\begin{equation*}
\|\bm{\zeta}^\varepsilon_\kappa-\bm{\zeta}^\varepsilon\|_{\bm{V}_M(\omega)} \le C \sqrt{\kappa},
\end{equation*}
for some $C>0$ independent of $\varepsilon$ and $\kappa$.
\end{proof}

We note in passing that the proof of Theorem~\ref{th:beta-6} was established by just assuming that $\min_{y \in \overline{\omega}}(\bm{a}^3\cdot\bm{q})>0$.
Our assumption appears to be more realistic than the abstract assumption $(\ast)$ introduced on page~299 of Scholz's seminal paper~\cite{Scholz1984}.
Moreover, we notice that the conclusions in Lemma~3 and Theorem~4 of~\cite{Scholz1984} continue to hold in the vector-valued case.

\section{Numerical approximation of the solution of Problem~\ref{problem2} via the Finite Element Method}
\label{approx:penalty}

In this section we present a suitable Finite Element Method to approximate the solution to Problem~\ref{problem1}. 
Following~\cite{PGCFEM} and~\cite{Brenner2008} (see also~\cite{ChaBat2011}, \cite{CheGloLi2003}, \cite{Ganesan2017} and~\cite{LiHuaAHuaQ2015}), we recall some basic terminology and definitions. 
In what follows the letter $h$ denotes a quantity approaching zero. For brevity, the same notation $C$ (with or without subscripts) designates a positive constant independent of $\varepsilon$, $\kappa$ and $h$, which can take different values at different places.
We denote by $(\mathcal{T}_h)_{h>0}$ a \emph{family of triangulations of the polygonal domain} $\overline{\omega}$ made of triangles and we let $T$ denote any element of such a family.
Let us first recall, following~\cite{Brenner2008} and~\cite{PGCFEM}, the \emph{rigorous} definition of \emph{finite element} in $\mathbb{R}^n$, where $n \ge 1$ is an integer. A \emph{finite element} in $\mathbb{R}^n$ is a \emph{triple}
$(T,P, \mathcal{N})$ where:

(i) $T$ is a closed subset of $\mathbb{R}^n$ with non-empty interior and Lipschitz-continuous boundary,

(ii) $P$ is a finite dimensional space of real-valued functions defined over $T$,

(iii) $\mathcal{N}$ is is a finite set of linearly independent linear forms $N_i$, $1 \le i \le \dim P$, defined over the space $P$.

By definition, it is assumed that the set $\mathcal{N}$ is \emph{$P$-unisolvent} in the following sense: given any real scalars $\alpha_i$, $1\le i \le \dim P$, there exists a unique function $g \in P$ which satisfies
$$
N_i(g)=\alpha_i, \quad 1 \le i \le \dim P.
$$

It is henceforth assumed that the \emph{degrees of freedom}, $N_i$ , lie in the dual space of a function space larger than $P$ like, for instance, a Sobolev space (see~\cite{Brenner2008}).
For brevity we shall conform our terminology to the one of~\cite{PGCFEM}, calling the sole set $T$ a finite element.
Define the \emph{diameter} of any finite element $T$ as follows:
$$
h_T=\text{diam }T:= \max_{x,y \in T} |x-y|.
$$

Let us also define
$$
\rho_T:=\sup\{\text{diam }B; B \textup{ is a ball contained in }T\}.
$$

A triangulation $\mathcal{T}_h$ is said to be \emph{regular} (cf., e.g., \cite{PGCFEM}) if:

(i) There exists a constant $\sigma>0$, independent of $h$, such that
$$
\textup{for all }T \in \mathcal{T}_h,\quad \dfrac{h_T}{\rho_T} \le \sigma.
$$

(ii) The quantity $h:=\max\{h_T>0; T \in \mathcal{T}_h\} $ approaches zero.

A triangulation $\mathcal{T}_h$ is said to satisfy \emph{an inverse assumption} (cf., e.g., \cite{PGCFEM}) if there exists a constant $\upsilon>0$ such that
$$
\textup{for all }T \in \mathcal{T}_h,\quad \dfrac{h}{h_T} \le \upsilon.
$$

We assume that the finite elements $(K, P_K, \Sigma_K)$, $K \in \bigcup_{h>0}\mathcal{T}_h$, are of class $\mathcal{C}^0$ and are affine (cf. Section~2.3 of~\cite{PGCFEM}), in the sense that they are affine equivalent to a single reference element $(\hat{K}, \hat{P}, \hat{\Sigma})$.


The forthcoming finite element analysis will be carried out using triangles of type $(1)$ (see Figure~2.2.1 of~\cite{PGCFEM}) to approximate the components of the solution of Problem~\ref{problem2}. In this case, the set $\mathcal{V}_h$ consists of all the vertices of the triangulation $\mathcal{T}_h$.

Let $V_{1,h}$, $V_{2,h}$ and $V_{3,h}$ be three finite dimensional spaces such that $V_{\alpha,h}\subset H^1_0(\omega)$ and $V_{3,h} \subset L^2(\omega)$.
Define
$$
\bm{V}_h:=V_{1,h} \times V_{2,h}\times V_{3,h},
$$
and observe that $\bm{V}_h \subset \bm{V}_M(\omega)$.

Let us now define the $\bm{V}_h$ interpolation operator $\bm{\Pi}_h:\bm{\mathcal{C}}^0(\overline{\omega})\to\bm{V}_h$ as follows
$$
\bm{\Pi}_h \bm{\xi}:=\left(\Pi_{1,h} \xi_1, \Pi_{2,h} \xi_2, \Pi_{3,h} \xi_3\right)\quad\textup{ for all }\bm{\xi}=(\xi_i)\in \bm{\mathcal{C}}^0(\overline{\omega}),
$$
where $\Pi_{i,h}$ is the standard $V_{i,h}$ interpolation operator (cf., e.g., \cite{PGCFEM} and~\cite{Brenner2008}). 
It thus results that the interpolation operator $\bm{\Pi}_h$ satisfies the following properties
\begin{equation*}
(\Pi_{j,h} \xi_j)(p)=\xi_j(p)\quad\textup{ for all integers }1 \le j \le 3 \textup{ and all vertices }p \in \mathcal{V}_h,
\end{equation*}
where $\nu_e$ is outer unit normal vector to the edge $e$.
Recall that
$$
\bm{H}(\omega)=H^2(\omega)\times H^2(\omega) \times H^1(\omega)
$$
and that it is equipped with the norm:
$$
\|\bm{\xi}\|_{\bm{H}(\omega)}=\|\xi_1\|_{H^2(\omega)}+\|\xi_2\|_{H^2(\omega)}+\|\xi_3\|_{H^1(\omega)} \quad\textup{ for all } \bm{\xi}=(\xi_i)\in \bm{H}(\omega).
$$

An application of Theorem~3.2.1 of~\cite{PGCFEM} (see also Theorem~4.4.20 of~\cite{Brenner2008}) yields
\begin{equation}
\label{Pih}
\|\bm{\xi}-\bm{\Pi}_h \bm{\xi}\|_{\bm{V}_M(\omega)} \le C h |\bm{\xi}|_{\bm{H}(\omega)},
\end{equation}
for all $\bm{\xi}\in \bm{H}(\omega)\cap \bm{V}_M(\omega)$, where $|\cdot|_{\bm{H}(\omega)}$ denotes the semi-norm associated with the norm $\|\cdot\|_{\bm{H}(\omega)}$.

For each $h>0$, denote the discretization of the elliptic operator $\bm{A}^\varepsilon:\bm{V}_M(\omega) \to \bm{V}'_M(\omega)$ over the triangulation $\mathcal{T}_h$ by $\bm{A}^{\varepsilon,h}$. We have that the linear mapping $\bm{A}^{\varepsilon,h}: \bm{V}_h \to \bm{V}_h$ si defined by
\begin{equation*}
\langle \bm{A}^{\varepsilon,h}\bm{\eta},\bm{\xi}\rangle_{\bm{V}'_M(\omega), \bm{V}_M(\omega)}:=\varepsilon\int_{\omega} a^{\alpha\beta\sigma\tau} \gamma_{\sigma\tau}(\bm{\eta})\gamma_{\alpha\beta}(\bm{\xi})\sqrt{a}\dd y,\quad\textup{ for all }\bm{\eta}, \bm{\xi} \in \bm{V}_h.
\end{equation*}

For each $h>0$, denote the projection of $\bm{L}^2(\omega)$ onto $\bm{V}_h$ by $\bm{P}^h$. We have that the mapping $\bm{P}^h:\bm{L}^2(\omega) \to \bm{V}_h$ is defined by
\begin{equation*}
\bm{P}^h(\bm{\eta}):=\sum_{\ell=1}^{\dim \bm{V}_h} \left(\int_{\omega} \bm{\eta}\cdot\bm{e}_\ell\dd y\right)\bm{e}_\ell,
\end{equation*}
where $\{\bm{e}_\ell\}_{\ell=1}^\infty$ is a Hilbert basis in $\bm{L}^2(\omega)$. We observe that the projection is defined in terms of the Fourier series of $\bm{\eta}$ (viz. Theorem~4.9-1 of~\cite{PGCLNFAA}).

The discretized version of Problem~\ref{problem2} is formulated as follows.

\begin{customprob}{$\mathcal{P}_{M,\kappa}^{\varepsilon,h}(\omega)$}
	\label{problem3}
	Find $\bm{\zeta}^{\varepsilon,h}_\kappa=(\zeta^{\varepsilon,h}_{\kappa,i}) \in \bm{V}_h$ satisfying the following variational equations:
	\begin{equation*}
	\varepsilon \int_\omega a^{\alpha \beta \sigma \tau} \gamma_{\sigma \tau}(\bm{\zeta}^{\varepsilon,h}_\kappa) \gamma_{\alpha \beta} (\bm{\eta}) \sqrt{a} \dd y 
	+\dfrac{\varepsilon}{\kappa}\int_{\omega} \bm{\beta}(\bm{\zeta}^{\varepsilon,h}_\kappa) \cdot \bm{\eta} \dd y
	= \int_\omega p^{i,\varepsilon} \eta_i \sqrt{a} \dd y,
	\end{equation*}
	for all $\bm{\eta} = (\eta_i) \in \bm{V}_h$.
	\bqed
\end{customprob}

It can be shown, thanks to an argument similar to the one exploited for establishing Theorem~\ref{ex-un-kappa}, that Problem~\ref{problem3} admits a unique solution $\bm{\zeta}^{\varepsilon,h}_\kappa \in\bm{V}_h$.

\begin{theorem}
\label{th:conv}
Let $\bm{\zeta}^\varepsilon_\kappa \in \bm{V}_M(\omega) \cap \bm{H}(\omega)$ be the solution of Problem~\ref{problem2}, and let $\bm{\zeta}^{\varepsilon,h}_\kappa \in \bm{V}_h$ be the solution of Problem~\ref{problem3}. Then there exists a constant $\hat{C}>0$ independent of $\varepsilon$, $\kappa$ and $h$ for which the following estimate holds
\begin{equation*}
\label{est:conv}
\|\bm{\zeta}^\varepsilon_\kappa-\bm{\zeta}^{\varepsilon,h}_\kappa\|_{\bm{V}_M(\omega)}
\le \hat{C} h\left(1+\dfrac{1}{\sqrt{\kappa}}\right).
\end{equation*}
\end{theorem}
\begin{proof}
Thanks to the boundedness of the sequences $\{\bm{\zeta}^\varepsilon_\kappa\}_{\kappa>0}$ and $\{\bm{\zeta}^\varepsilon\}_{\varepsilon>0}$ in $\bm{H}(\omega)$ (Theorem~\ref{ex-un-kappa}, Theorem~\ref{t:4}, Theorem~\ref{aug:int}, Theorem~\ref{aug:bdry}, \eqref{conclusion-7} and~\eqref{Pih}), we have that
\begin{equation*}
\|\bm{\zeta}^\varepsilon_\kappa-\bm{\Pi}_h \bm{\zeta}^\varepsilon_\kappa\|_{\bm{V}_M(\omega)} \le C h |\bm{\zeta}^\varepsilon_\kappa|_{\bm{H}(\omega)},
\end{equation*}
and the semi-norm on the right-hand side is bounded independently of $\kappa$ and $\varepsilon$ (see the remark after Theorem~\ref{aug:bdry}).

Thanks to the calculations carried out in Lemma~\ref{lem:beta} for establishing the monotonicity of the operator $\bm{\beta}$, we have that
\begin{align*}
&\dfrac{\sqrt{a_0}\varepsilon}{c_0 c_e}\|\bm{\zeta}^\varepsilon_\kappa - \bm{\zeta}^{\varepsilon,h}_\kappa\|_{\bm{V}_M(\omega)}^2 +
\dfrac{\varepsilon\kappa^{-1}}{\sqrt{3 \max\{\|\bm{a}^\ell \cdot\bm{q}\|_{\mathcal{C}^0(\overline{\omega})}^2;1\le \ell \le 3\}}}\int_{\omega}\left|\left(-\{(\bm{\theta}+\zeta^\varepsilon_{\kappa,j}\bm{a}^j)\cdot\bm{q}\}^{-}\right) - \left(-\{(\bm{\theta}+\zeta^{\varepsilon,h}_{\kappa,j}\bm{a}^j)\cdot\bm{q}\}^{-}\right)\right|^2\dd y\\
&\le\varepsilon\int_{\omega} a^{\alpha\beta\sigma\tau}\gamma_{\sigma\tau}(\bm{\zeta}^\varepsilon_\kappa - \bm{\zeta}^{\varepsilon,h}_\kappa)\gamma_{\alpha \beta}(\bm{\zeta}^\varepsilon_\kappa - \bm{\zeta}^{\varepsilon,h}_\kappa)\sqrt{a} \dd y\\
&\quad+\dfrac{\varepsilon}{\kappa}\int_{\omega} \left(\left[-\{(\bm{\theta}+\zeta^\varepsilon_{\kappa,j}\bm{a}^j)\cdot\bm{q}\}^{-}\right] - \left[-\{(\bm{\theta}+\zeta^{\varepsilon,h}_{\kappa,j}\bm{a}^j)\cdot\bm{q}\}^{-}\right]\right) \left(\dfrac{(\zeta^\varepsilon_{\kappa,i}-\zeta^{\varepsilon,h}_{\kappa,i})\bm{a}^i\cdot\bm{q}}{\sqrt{\sum_{\ell=1}^{3}|\bm{a}^\ell \cdot\bm{q}|^2}}\right) \dd y\\
&\le \dfrac{\sqrt{a_0}\varepsilon}{2 c_0 c_e}\|\bm{\zeta}^\varepsilon_\kappa-\bm{\zeta}^{\varepsilon,h}_\kappa\|_{\bm{V}_M(\omega)}^2
+\dfrac{\varepsilon c_0 c_e a_1}{2\sqrt{a_0}}\|\bm{\zeta}^\varepsilon_\kappa-\bm{\zeta}^{\varepsilon,h}_\kappa\|_{\bm{V}_M(\omega)}^2\\
&\quad+\dfrac{\varepsilon\kappa^{-1}}{2\sqrt{3 \max\{\|\bm{a}^\ell \cdot\bm{q}\|_{\mathcal{C}^0(\overline{\omega})}^2;1\le \ell \le 3\}}}
\int_{\omega}\left|\left[-\{(\bm{\theta}+\zeta^\varepsilon_{\kappa,j}\bm{a}^j)\cdot\bm{q}\}^{-}\right] - \left[-\{(\bm{\theta}+\zeta^{\varepsilon,h}_{\kappa,j}\bm{a}^j)\cdot\bm{q}\}^{-}\right]\right|^2\dd y\\
&\quad+\varepsilon\dfrac{3 \max\{\|\bm{a}^\ell \cdot\bm{q}\|_{\mathcal{C}^0(\overline{\omega})}^2;1\le \ell \le 3\}}{2\kappa \min_{y \in \overline{\omega}}(\bm{a}^3\cdot\bm{q})}\|\bm{\zeta}^\varepsilon_\kappa-\bm{\zeta}^{\varepsilon,h}_\kappa\|_{\bm{L}^2(\omega)}^2.
\end{align*}

Combining the latter inequalities with Cea's lemma (cf., e.g., Theorem~2.4.1 of~\cite{PGCFEM}) and~\eqref{Pih} gives
\begin{equation*}
\begin{aligned}
&\dfrac{\sqrt{a_0}\varepsilon}{2c_0 c_e}\|\bm{\zeta}^\varepsilon_\kappa - \bm{\zeta}^{\varepsilon,h}_\kappa\|_{\bm{V}_M(\omega)}^2 +
\dfrac{\varepsilon\kappa^{-1}}{2\sqrt{3 \max\{\|\bm{a}^\ell \cdot\bm{q}\|_{\mathcal{C}^0(\overline{\omega})}^2;1\le \ell \le 3\}}}\int_{\omega}\left|\left(-\{(\bm{\theta}+\zeta^\varepsilon_{\kappa,j}\bm{a}^j)\cdot\bm{q}\}^{-}\right) - \left(-\{(\bm{\theta}+\zeta^{\varepsilon,h}_{\kappa,j}\bm{a}^j)\cdot\bm{q}\}^{-}\right)\right|^2\dd y\\
&\le\dfrac{\varepsilon c_0 c_e a_1}{2\sqrt{a_0}}\|\bm{\zeta}^\varepsilon_\kappa-\bm{\Pi}_h\bm{\zeta}^\varepsilon_\kappa\|_{\bm{V}_M(\omega)}^2
+\varepsilon\dfrac{3 \max\{\|\bm{a}^\ell \cdot\bm{q}\|_{\mathcal{C}^0(\overline{\omega})}^2;1\le \ell \le 3\}}{2\kappa \min_{y \in \overline{\omega}}(\bm{a}^3\cdot\bm{q})}\|\bm{\zeta}^\varepsilon_\kappa-\bm{\Pi}_h\bm{\zeta}^\varepsilon_\kappa\|_{\bm{L}^2(\omega)}^2.
\end{aligned}
\end{equation*}

Letting
$$
\tilde{C}^2:=\max\left\{\dfrac{c_0 c_e a_1}{\sqrt{a_0}},\dfrac{3 \max\{\|\bm{a}^\ell \cdot\bm{q}\|_{\mathcal{C}^0(\overline{\omega})}^2;1\le \ell \le 3\}}{ \min_{y \in \overline{\omega}}(\bm{a}^3\cdot\bm{q})}\right\}C,
$$
where $C>0$ is the constant appearing in~\eqref{Pih} or, equivalently, in Theorem~3.2.1 of~\cite{PGCFEM}, we obtain the estimate
\begin{equation*}
\|\bm{\zeta}^\varepsilon_\kappa - \bm{\zeta}^{\varepsilon,h}_\kappa\|_{\bm{V}_M(\omega)}^2
\le \tilde{C}^2 h^2\left(1+\dfrac{1}{\kappa}\right)|\bm{\zeta}^\varepsilon_\kappa|_{\bm{H}(\omega)}^2,
\end{equation*}
which, together with Lemma~3.2 page~260 of~\cite{Nec67}, straightforwardly leads to the conclusion.
\end{proof}

\section{Numerical approximation of the solution of Problem~\ref{problem2} via the Brezis-Sibony iteration scheme}
\label{approx:BrezisSibony}

In view of Theorem~\ref{th:conv}, we are in position to define the discrete nonlinear operator $\bm{N}^\varepsilon_h:\bm{V}_h \to\bm{V}_h$ by:
\begin{equation*}
\bm{N}^\varepsilon_h(\bm{\eta})=\bm{A}^\varepsilon_h\bm{\eta}+\dfrac{\varepsilon}{h^q}\bm{P}_h(\bm{\beta}(\bm{\eta}))-\bm{P}_h(\bm{p}^\varepsilon\sqrt{a}),
\end{equation*}
where the specialization $\kappa:=h^q$, with $0< q <2$ ensures the convergence of the sequence of solutions of Problem~\ref{problem3} to the solution of Problem~\ref{problem2} (Theorem~\ref{th:conv}).

We have that if $\bm{\zeta}^{\varepsilon,h}_\kappa$ is the solution of Problem~\ref{problem3}, then we have that $\bm{N}^\varepsilon_h(\bm{\zeta}^{\varepsilon,h}_\kappa)=\bm{0}$.

In this section we extend the validity of the scheme proposed by Brezis \& Sibony in~\cite{BrezisSibony1968} to approximate the solution of Problem~\ref{problem3} by means of an iterative pattern.

Critical to establishing the sought convergence is the inverse assumption stated in section~\ref{approx:penalty} which, we notice, was not exploitd to carry out the proof of Theorem~\ref{th:conv}. As a consequence of Theorem~3.2.6 of~\cite{PGCFEM} we have that the following \emph{inverse inequality} holds.

\begin{lemma}
\label{inv:ineq}
Let $h>0$ be given and let $\mathcal{T}_h$ be a regular triangulation of $\omega$ made of affine elements of class $\mathcal{C}^0$ (viz. section~\ref{approx:penalty}).
Then, the following inverse inequality holds
\begin{equation*}
\left(\sum_{K\in\mathcal{T}_h}|\bm{\eta}_h|_{\bm{V}_M(K)}^2\right)^{1/2} \le C_{\textup{inv}} h^{-1} \left(\sum_{K\in\mathcal{T}_h}|\bm{\eta}_h|_{\bm{L}^2(K)}^2\right)^{1/2},\quad\textup{ for all }\bm{\eta}_h\in \bm{V}_h,
\end{equation*}
for some $C_{\textup{inv}}>0$ independent of $h$.
\end{lemma}
\begin{proof}
An application of Theorem~3.2.6 of~\cite{PGCFEM} gives
\begin{equation*}
\left(\sum_{K\in\mathcal{T}_h}|\eta_h|_{H^1(K)}^2\right)^{1/2} \le C h^{-1} \left(\sum_{K\in\mathcal{T}_h}|\eta_h|_{L^2(K)}^2\right)^{1/2},
\end{equation*}
and the sought estimate derives straightforwardly.
\end{proof}

We are thus in position to establish the main result of this section, namely, the convergence of the Brezis-Sibony scheme for Problem~\ref{problem3}.

\begin{theorem}
\label{BrezisSybony}
Let us define, for the sake of simplicity, the vector field $\hat{\bm{\psi}}$ as follows
\begin{equation}
\label{psi:hat}
\hat{\bm{\psi}}:=\bm{\zeta}^{\varepsilon,h}_\kappa,
\end{equation}
and we let $\bm{\psi}_0 \in \bm{V}_h$ be arbitrarily chosen. Let $c_0>0$ be the constant of Korn's inequality (Theorem~\ref{korn}), let $c_e>0$ the constant associated with the uniform positive-definiteness of the fourth order two-dimensional elasticity tensor $(a^{\alpha\beta\sigma\tau})$, let $C_{\textup{inv}}>0$ be the constant associated with the inverse property (Theorem~\ref{inv:ineq}), let $M>0$ be the sup norm of the fourth order two-dimensional elasticity tensor $(a^{\alpha\beta\sigma\tau})$, and let $a_0>0$ and $a_1>0$ be, respectively, the minimum and maximum of the function $a=\det(a_{\alpha\beta})$ introduced in section~\ref{sec1}.

Then, there exists a positive number $\Xi>0$ such that the sequence of vector fields $\{\bm{\psi}_k\}_{k=0}^\infty \subset \bm{V}_h$ defined by
\begin{equation}
\label{iterate}
\bm{\psi}_{k+1}:=\bm{\psi}_k-\Xi h^4 \bm{N}^\varepsilon_h(\bm{\psi}_k),
\end{equation}
satisfies
\begin{equation}
\label{conv:iter}
\|\hat{\bm{\psi}}-\bm{\psi}_{k+1}\|_{\bm{L}^2(\omega)}\le \sqrt{1-\rho'}\|\hat{\bm{\psi}}-\bm{\psi}_k\|_{\bm{L}^2(\omega)}, \quad\textup{ for all }k\ge 0,
\end{equation}
for some $\rho'=\rho'(h,\Xi) \in (0,1)$, whenever $h>0$ is such that 
\begin{equation}
\label{h:bound}
h <\sqrt{\dfrac{c_0 c_e\left(MC_{\textup{inv}}^2\sqrt{a_1}+1\right)}{\sqrt{a_0}}},
\end{equation}
and $\Xi>0$ is such that
\begin{equation}
\label{xi:bound}
\Xi<\frac{2\sqrt{a_0}}{c_0 c_e \left(MC_{\textup{inv}}^2\sqrt{a_1}+1\right)^2}.
\end{equation}
\end{theorem}
\begin{proof}
To begin with, thanks to~\eqref{iterate} and the fact that $\bm{N}^\varepsilon_h(\hat{\bm{\psi}})=\bm{0}$ by~\eqref{psi:hat}, we compute
\begin{align*}
&\hat{\bm{\psi}}-\bm{\psi}_{k+1}=\hat{\bm{\psi}}-\bm{\psi}_k+h^4 \Xi \bm{N}^\varepsilon_h(\bm{\psi}_k)
=\hat{\bm{\psi}}-\bm{\psi}_k-h^4 \Xi \left(\bm{N}^\varepsilon_h(\hat{\bm{\psi}})-\bm{N}^\varepsilon_h(\bm{\psi}_k)\right)\\
&=\hat{\bm{\psi}}-\bm{\psi}_k-h^4 \Xi\left[\left(\bm{A}^\varepsilon_h \hat{\bm{\psi}} +\varepsilon h^{-q} \bm{P}_h(\bm{\beta}(\hat{\bm{\psi}}))-\bm{P}_h(\bm{p}^\varepsilon\sqrt{a})\right)-\left(\bm{A}^\varepsilon_h \bm{\psi}_k +\varepsilon h^{-q} \bm{P}_h(\bm{\beta}(\bm{\psi}_k))-\bm{P}_h(\bm{p}^\varepsilon \sqrt{a})\right)\right]\\
&=\hat{\bm{\psi}}-\bm{\psi}_k-h^4 \Xi\left[\bm{A}^\varepsilon_h(\hat{\bm{\psi}}-\bm{\psi}_k)+\varepsilon h^{-q}\bm{P}_h\left(\bm{\beta}(\hat{\bm{\psi}})-\bm{\beta}(\bm{\psi}_k)\right)\right].
\end{align*}

Define the operator $\bm{Q}_h:\bm{V}_h \to\bm{V}_h$ by
\begin{equation*}
\bm{Q}_h(\bm{\eta}):=\bm{A}^\varepsilon_h\bm{\eta}+\varepsilon h^{-q}\bm{P}_h\left(\bm{\beta}(\bm{\eta})\right),\quad\textup{ for all }\bm{\eta}\in\bm{V}_h.
\end{equation*}

Thanks to this newly introduced definition we can thus write
\begin{equation}
\label{BS1}
\hat{\bm{\psi}}-\bm{\psi}_{k+1}=\hat{\bm{\psi}}-\bm{\psi}_k-h^4 \Xi\left[\bm{Q}_h(\hat{\bm{\psi}})-\bm{Q}_h(\bm{\psi}_k)\right],\quad\textup{ for all }k\ge0.
\end{equation}

In view of~\eqref{BS1}, the uniform positive-definiteness of the fourth order two-dimensional elasticity tensor $(a^{\alpha\beta\sigma\tau})$ (Theorem~3.3-1 of~\cite{Ciarlet2000}), Korn's inequality (Theorem~\ref{korn}), we compute
\begin{equation}
\label{BS1.1}
\begin{aligned}
&\|\hat{\bm{\psi}}-\bm{\psi}_{k+1}\|_{\bm{L}^2(\omega)}^2=\|\hat{\bm{\psi}}-\bm{\psi}_k\|_{\bm{L}^2(\omega)}^2
+h^8 \Xi^2\|\bm{Q}_h(\hat{\bm{\psi}})-\bm{Q}_h(\bm{\psi}_k)\|_{\bm{L}^2(\omega)}^2\\
&\quad-2h^4 \Xi \int_{\omega} (\hat{\bm{\psi}}-\bm{\psi}_k) \cdot \left(\bm{Q}_h(\hat{\bm{\psi}})-\bm{Q}_h(\bm{\psi}_k)\right) \dd y\\
&=\|\hat{\bm{\psi}}-\bm{\psi}_k\|_{\bm{L}^2(\omega)}^2
+h^8 \Xi^2\|\bm{Q}_h(\hat{\bm{\psi}})-\bm{Q}_h(\bm{\psi}_k)\|_{\bm{L}^2(\omega)}^2
-2h^4 \Xi \int_{\omega} (\hat{\bm{\psi}}-\bm{\psi}_k) \cdot \bm{A}^\varepsilon_h(\hat{\bm{\psi}}-\bm{\psi}_k) \dd y\\
&\quad-2\varepsilon h^{4-q} \Xi \int_{\omega} (\hat{\bm{\psi}}-\bm{\psi}_k) \cdot \left(\bm{P}_h(\bm{\beta}(\hat{\bm{\psi}}))-\bm{P}_h(\bm{\beta}(\bm{\psi}_k))\right) \dd y\\
&\le \|\hat{\bm{\psi}}-\bm{\psi}_k\|_{\bm{L}^2(\omega)}^2 +h^8 \Xi^2\|\bm{Q}_h(\hat{\bm{\psi}})-\bm{Q}_h(\bm{\psi}_k)\|_{\bm{L}^2(\omega)}^2
-2 h^4 \Xi \dfrac{\varepsilon\sqrt{a_0}}{c_0 c_e}\|\hat{\bm{\psi}}-\bm{\psi}_k\|_{\bm{L}^2(\omega)}^2\\
&\quad-2 \varepsilon h^{4-q} \Xi \int_{\omega} (\hat{\bm{\psi}}-\bm{\psi}_k) \cdot \bm{P}_h(\bm{\beta}(\hat{\bm{\psi}})-\bm{\beta}(\bm{\psi}_k)) \dd y.
\end{aligned}
\end{equation}

Let $\{\bm{e}_\ell\}_{\ell=1}^{\infty}$ be a Hilbert basis in $\bm{L}^2(\omega)$.
By the theory of Fourier series (cf., e.g., Theorem~4.9-1 of~\cite{PGCLNFAA}), we have that the last integral term can be rewritten as follows:
\begin{equation}
\label{BS1.2}
\begin{aligned}
&\int_{\omega} (\hat{\bm{\psi}}-\bm{\psi}_k) \cdot \left(\bm{P}_h(\bm{\beta}(\hat{\bm{\psi}})-\bm{\beta}(\bm{\psi}_k))\right) \dd y
=\int_{\omega} (\hat{\bm{\psi}}-\bm{\psi}_k) \cdot \left(\sum_{\ell=1}^{\dim\bm{V}_h} \left(\int_{\omega} (\bm{\beta}(\hat{\bm{\psi}})-\bm{\beta}(\bm{\psi}_k)) \cdot \bm{e}_\ell\dd s\right)\bm{e}_\ell\right) \dd y\\
&=\sum_{\ell=1}^{\dim\bm{V}_h}\left\{\left(\int_{\omega} (\hat{\bm{\psi}}-\bm{\psi}_k) \cdot \bm{e}_\ell \dd y\right)\left(\int_{\omega} (\bm{\beta}(\hat{\bm{\psi}})-\bm{\beta}(\bm{\psi}_k)) \cdot \bm{e}_\ell\dd s\right)\right\}\\
&=\int_{\omega}(\bm{\beta}(\hat{\bm{\psi}})-\bm{\beta}(\bm{\psi}_k)) \cdot \left(\sum_{\ell=1}^{\dim\bm{V}_h} \left(\int_{\omega}(\hat{\bm{\psi}}-\bm{\psi}_k) \cdot\bm{e}_\ell \dd y\right)\bm{e}_\ell\right) \dd s
=\int_{\omega}(\bm{\beta}(\hat{\bm{\psi}})-\bm{\beta}(\bm{\psi}_k)) \cdot (\hat{\bm{\psi}}-\bm{\psi}_k)\dd s \ge 0,
\end{aligned}
\end{equation}
where the last equality holds thanks to the fact that $\hat{\bm{\psi}}, \bm{\psi}_k \in\bm{V}_h$.

For each $k\ge 0$, let us now estimate
\begin{align*}
&\|\bm{Q}_h(\hat{\bm{\psi}})-\bm{Q}_h(\bm{\psi}_k)\|_{\bm{L}^2(\omega)}^2
=\left\|\left(\bm{A}^\varepsilon_h\hat{\bm{\psi}}+\varepsilon h^{-q}\bm{P}_h(\bm{\beta}(\hat{\bm{\psi}}))\right)-\left(\bm{A}^\varepsilon_h\bm{\psi}_k+\varepsilon h^{-q}\bm{P}_h(\bm{\beta}(\bm{\psi}_k))\right)\right\|_{\bm{L}^2(\omega)}^2\\
&=\left\|\bm{A}^\varepsilon_h(\hat{\bm{\psi}}-\bm{\psi}_k)+\varepsilon h^{-q}\bm{P}_h(\bm{\beta}(\hat{\bm{\psi}})-\bm{\beta}(\bm{\psi}_k))\right\|_{\bm{L}^2(\omega)}^2
=\|\bm{A}^\varepsilon_h(\hat{\bm{\psi}}-\bm{\psi}_k)\|_{\bm{L}^2(\omega)}^2\\
&\quad+ \varepsilon h^{-2q}\|\bm{P}_h(\bm{\beta}(\hat{\bm{\psi}})-\bm{\beta}(\bm{\psi}_k))\|_{\bm{L}^2(\omega)}^2
+2\varepsilon h^{-q}\int_{\omega} \left(\bm{A}^\varepsilon_h(\hat{\bm{\psi}}-\bm{\psi}_k)\right) \cdot\left(\bm{P}_h(\bm{\beta}(\hat{\bm{\psi}})-\bm{\beta}(\bm{\psi}_k))\right) \dd y\\
&=\varepsilon\int_{\omega}a^{\alpha\beta\sigma\tau} \gamma_{\sigma \tau}(\hat{\bm{\psi}}-\bm{\psi}_k) \gamma_{\alpha\beta}(\bm{A}^\varepsilon_h(\hat{\bm{\psi}}-\bm{\psi}_k))\sqrt{a}\dd y\\
&\quad+\varepsilon h^{-2q}\|\bm{P}_h(\bm{\beta}(\hat{\bm{\psi}})-\bm{\beta}(\bm{\psi}_k))\|_{\bm{L}^2(\omega)}^2
+2\varepsilon h^{-q}\int_{\omega} \left(\bm{A}^\varepsilon_h(\hat{\bm{\psi}}-\bm{\psi}_k)\right) \cdot\left(\bm{P}_h(\bm{\beta}(\hat{\bm{\psi}})-\bm{\beta}(\bm{\psi}_k))\right) \dd y\\
&\le\varepsilon\int_{\omega}a^{\alpha\beta\sigma\tau} \gamma_{\sigma \tau}(\hat{\bm{\psi}}-\bm{\psi}_k) \gamma_{\alpha\beta}(\bm{A}^\varepsilon_h(\hat{\bm{\psi}}-\bm{\psi}_k))\sqrt{a}\dd y
+\varepsilon h^{-2q}\|\bm{P}_h(\bm{\beta}(\hat{\bm{\psi}})-\bm{\beta}(\bm{\psi}_k))\|_{\bm{L}^2(\omega)}^2\\
&\quad+2\varepsilon h^{-q}\|\bm{A}^\varepsilon_h(\hat{\bm{\psi}}-\bm{\psi}_k)\|_{\bm{L}^2(\omega)} \|\bm{P}_h(\bm{\beta}(\hat{\bm{\psi}})-\bm{\beta}(\bm{\psi}_k))\|_{\bm{L}^2(\omega)}\\
&\le M\varepsilon\sqrt{a}_1\|\hat{\bm{\psi}}-\bm{\psi}_k\|_{\bm{V}_M(\omega)} \|\bm{A}^\varepsilon_h(\hat{\bm{\psi}}-\bm{\psi}_k)\|_{\bm{V}_M(\omega)}
+\varepsilon h^{-2q}\|\hat{\bm{\psi}}-\bm{\psi}_k\|_{\bm{L}^2(\omega)}^2\\
&\quad+2\varepsilon h^{-q}\|\bm{A}^\varepsilon_h(\hat{\bm{\psi}}-\bm{\psi}_k)\|_{\bm{L}^2(\omega)} \|\hat{\bm{\psi}}-\bm{\psi}_k\|_{\bm{L}^2(\omega)},
\end{align*}
where the second last estimate is due to the continuity of the bilinear form, and the last estimate is due to the fact that the projection $\bm{P}_h$ and the operator $\bm{\beta}$ are non-expansive mappings (cf., e.g., Theorem~4.3-1(c) of~\cite{PGCLNFAA} and Lemma~\ref{lem:beta}). To sum up, we have shown that
\begin{equation}
\label{BS2}
\begin{aligned}
&\|\bm{Q}_h(\hat{\bm{\psi}})-\bm{Q}_h(\bm{\psi}_k)\|_{\bm{L}^2(\omega)}^2\le M\varepsilon\sqrt{a}_1\|\hat{\bm{\psi}}-\bm{\psi}_k\|_{\bm{V}_M(\omega)} \|\bm{A}^\varepsilon_h(\hat{\bm{\psi}}-\bm{\psi}_k)\|_{\bm{V}_M(\omega)}
+\varepsilon h^{-2q}\|\hat{\bm{\psi}}-\bm{\psi}_k\|_{\bm{L}^2(\omega)}^2\\
&\quad+2\varepsilon h^{-q}\|\bm{A}^\varepsilon_h(\hat{\bm{\psi}}-\bm{\psi}_k)\|_{\bm{L}^2(\omega)} \|\hat{\bm{\psi}}-\bm{\psi}_k\|_{\bm{L}^2(\omega)}.
\end{aligned}
\end{equation}

Thanks to the inverse property (Lemma~\ref{inv:ineq}), we have that
\begin{equation}
\label{BS3}
\|\bm{A}^\varepsilon_h(\hat{\bm{\psi}}-\bm{\psi}_k)\|_{\bm{V}_M(\omega)}\le \dfrac{C_{\textup{inv}}}{h}\|\bm{A}^\varepsilon_h(\hat{\bm{\psi}}-\bm{\psi}_k)\|_{\bm{L}^2(\omega)}.
\end{equation}

An application of~\eqref{BS3} gives
\begin{align*}
&\|\bm{A}^\varepsilon_h(\hat{\bm{\psi}}-\bm{\psi}_k)\|_{\bm{L}^2(\omega)}^2
=\varepsilon\int_{\omega} a^{\alpha\beta\sigma\tau} \gamma_{\sigma\tau}(\hat{\bm{\psi}}-\bm{\psi}_k) \gamma_{\alpha\beta}(\bm{A}^\varepsilon_h(\hat{\bm{\psi}}-\bm{\psi}_k)) \sqrt{a}\dd y\\
&\le M\varepsilon\sqrt{a_1}\|\hat{\bm{\psi}}-\bm{\psi}_k\|_{\bm{V}_M(\omega)} \|\bm{A}^\varepsilon_h(\hat{\bm{\psi}}-\bm{\psi}_k)\|_{\bm{V}_M(\omega)}\\
&\le \dfrac{MC_{\textup{inv}}^2\varepsilon\sqrt{a_1}}{h^2}\|\hat{\bm{\psi}}-\bm{\psi}_k\|_{\bm{L}^2(\omega)}\|\bm{A}^\varepsilon_h(\hat{\bm{\psi}}-\bm{\psi}_k)\|_{\bm{L}^2(\omega)}.
\end{align*}

The latter in turn implies that
\begin{equation}
\label{BS4}
\|\bm{A}^\varepsilon_h(\hat{\bm{\psi}}-\bm{\psi}_k)\|_{\bm{L}^2(\omega)}
\le \dfrac{MC_{\textup{inv}}^2\varepsilon\sqrt{a_1}}{h^2}\|\hat{\bm{\psi}}-\bm{\psi}_k\|_{\bm{L}^2(\omega)}.
\end{equation}

Thanks to~\eqref{BS3}, \eqref{BS4}, the inverse property (Theorem~) and the fact that $0< q<2$, we are able to estimate the right-hand side of~\eqref{BS2} as follows:
\begin{equation*}
\begin{aligned}
&M\varepsilon\sqrt{a}_1\|\hat{\bm{\psi}}-\bm{\psi}_k\|_{\bm{V}_M(\omega)} \|\bm{A}^\varepsilon_h(\hat{\bm{\psi}}-\bm{\psi}_k)\|_{\bm{V}_M(\omega)}
+\varepsilon h^{-2q}\|\hat{\bm{\psi}}-\bm{\psi}_k\|_{\bm{L}^2(\omega)}^2+2\varepsilon h^{-q}\|\bm{A}^\varepsilon_h(\hat{\bm{\psi}}-\bm{\psi}_k)\|_{\bm{L}^2(\omega)} \|\hat{\bm{\psi}}-\bm{\psi}_k\|_{\bm{L}^2(\omega)}\\
&\le \dfrac{MC_{\textup{inv}}^2\varepsilon\sqrt{a_1}}{h^2}\|\hat{\bm{\psi}}-\bm{\psi}_k\|_{\bm{L}^2(\omega)} \|\bm{A}^\varepsilon_h(\hat{\bm{\psi}}-\bm{\psi}_k)\|_{\bm{L}^2(\omega)}
+ \varepsilon h^{-2q}\|\hat{\bm{\psi}}-\bm{\psi}_k\|_{\bm{L}^2(\omega)}^2+\dfrac{2MC_{\textup{inv}}^2\varepsilon^2\sqrt{a_1}}{h^{2+q}} \|\hat{\bm{\psi}}-\bm{\psi}_k\|_{\bm{L}^2(\omega)}^2\\
&\le \left(\dfrac{M^2C_{\textup{inv}}^4\varepsilon^2 a_1}{h^4}+\varepsilon h^{-2q}+\dfrac{2MC_{\textup{inv}}^2\varepsilon^2\sqrt{a_1}}{h^{2+q}}\right)\|\hat{\bm{\psi}}-\bm{\psi}_k\|_{\bm{L}^2(\omega)}^2
\le h^{-4}\left(M^2C_{\textup{inv}}^4\varepsilon^2 a_1+\varepsilon+2MC_{\textup{inv}}^2\varepsilon^2\sqrt{a_1}\right)\|\hat{\bm{\psi}}-\bm{\psi}_k\|_{\bm{L}^2(\omega)}^2\\
&\le h^{-4}\varepsilon\left(MC_{\textup{inv}}^2\sqrt{a_1}+1\right)^2\|\hat{\bm{\psi}}-\bm{\psi}_k\|_{\bm{L}^2(\omega)}^2.
\end{aligned}
\end{equation*}

Combining the latter inequality with~\eqref{BS2} gives at once:
\begin{equation}
\label{BS5}
\|\bm{Q}_h(\hat{\bm{\psi}})-\bm{Q}_h(\bm{\psi}_k)\|_{\bm{L}^2(\omega)}^2
\le h^{-4}\varepsilon\left(MC_{\textup{inv}}^2\sqrt{a_1}+1\right)^2\|\hat{\bm{\psi}}-\bm{\psi}_k\|_{\bm{L}^2(\omega)}^2.
\end{equation}

Therefore, combining~\eqref{BS1.1}, \eqref{BS1.2} and~\eqref{BS5} gives
\begin{equation}
\label{BS6}
\begin{aligned}
&\|\hat{\bm{\psi}}-\bm{\psi}_{k+1}\|_{\bm{L}^2(\omega)}^2
\le \|\hat{\bm{\psi}}-\bm{\psi}_k\|_{\bm{L}^2(\omega)}^2 +h^8 \Xi^2\|\bm{Q}_h(\hat{\bm{\psi}})-\bm{Q}_h(\bm{\psi}_k)\|_{\bm{L}^2(\omega)}^2
-2 h^4 \Xi \dfrac{\varepsilon\sqrt{a_0}}{c_0 c_e}\|\hat{\bm{\psi}}-\bm{\psi}_k\|_{\bm{L}^2(\omega)}^2\\
&\le \left(1-2 h^4  \dfrac{\varepsilon\sqrt{a_0}}{c_0 c_e}\Xi+ \varepsilon h^4 \left(MC_{\textup{inv}}^2\sqrt{a_1}+1\right)^2\Xi^2\right) \|\hat{\bm{\psi}}-\bm{\psi}_k\|_{\bm{L}^2(\omega)}^2.
\end{aligned}
\end{equation}

Let us now consider the polynomial $p(\Xi):=1-2 \varepsilon h^4 \dfrac{\sqrt{a_0}}{c_0 c_e}\Xi+ \varepsilon h^4 \left(MC_{\textup{inv}}^2\sqrt{a_1}+1\right)^2\Xi^2$, and let us observe that its discriminant is such that
\begin{equation*}
\dfrac{\Delta}{4}=h^4\left(\varepsilon^2 \dfrac{a_0}{c_0^2 c_e^2} h^4-\varepsilon\left(MC_{\textup{inv}}^2\sqrt{a_1}+1\right)^2\right)
< \varepsilon h^4 \left(\dfrac{a_0}{c_0^2 c_e^2} h^4-\left(MC_{\textup{inv}}^2\sqrt{a_1}+1\right)^2\right)
\end{equation*}
and it is negative when
\begin{equation*}
h <\sqrt{\dfrac{c_0 c_e\left(MC_{\textup{inv}}^2\sqrt{a_1}+1\right)}{\sqrt{a_0}}}.
\end{equation*}

Therefore, thanks to~\eqref{h:bound}, we have that $p(\Xi)>0$ for all $\Xi\in\mathbb{R}$, on the one hand.

On the other hand, we have that $p(\Xi)<1$ if and only if
\begin{equation*}
\Xi<\frac{2\sqrt{a_0}}{c_0 c_e \left(MC_{\textup{inv}}^2\sqrt{a_1}+1\right)^2},
\end{equation*}
as per our assumption~\eqref{xi:bound}. This means that, under the assumptions~\eqref{h:bound} and~\eqref{xi:bound}, the coefficient on the right-hand side of~\eqref{BS6} is a number between $0$ and $1$.
We thus define the number
\begin{equation*}
\rho':=1-\left(1-2 h^4 \Xi \dfrac{\varepsilon\sqrt{a_0}}{c_0 c_e}+ h^4 \varepsilon\Xi^2 \left(MC_{\textup{inv}}^2\sqrt{a_1}+1\right)^2\right) \in (0,1),
\end{equation*}
and~\eqref{BS6} becomes
\begin{equation*}
\label{BS7}
\|\hat{\bm{\psi}}-\bm{\psi}_{k+1}\|_{\bm{L}^2(\omega)}
\le \sqrt{1-\rho'} \|\hat{\bm{\psi}}-\bm{\psi}_k\|_{\bm{L}^2(\omega)},
\end{equation*}
and the proof is complete.
\end{proof}

Note in passing that iterating~\eqref{conv:iter} gives
\begin{equation*}
\label{conv:iter:2}
\|\hat{\bm{\psi}}-\bm{\psi}_{k+1}\|_{\bm{L}^2(\omega)}\le (1-\rho')^{\frac{k+1}{2}}\|\hat{\bm{\psi}}-\bm{\psi}_0\|_{\bm{L}^2(\omega)} \to 0,
\end{equation*}
as $k\to\infty$, being $(1-\rho') \in (0,1)$.

As a final remark, we observe that the iterative scheme~\eqref{iterate} is expected to converge very slowly.
This is due to the presence of the $h^4$ multiplicative term, which \emph{dampens} the convergence by making the norm $\|\bm{\psi}_{k+1}-\bm{\psi}_k\|_{\bm{L}^2(\omega)}$ small for all $k\ge 0$. 
The dampening is due to the fact that the $h^4$ term neglects the effects of the term $\kappa=h^{-q}$, $0<q<2$, appearing in the penalty term.
This means that the iterates will slowly depart from the initialisation $\bm{\psi}_0$ which is customarily chosen to be either $\bm{0}$ (viz. \cite{Scholz1984}) or the solution of the linearised version of the problem under consideration.

\section{Numerical Simulations}
\label{numerics}

In this last section of the paper, we implement numerical simulations aiming to test the convergence of the algorithms presented in section~\ref{approx:original} and in section~\ref{approx:penalty}.

Let $R>0$ be given. We consider as a domain a circle of radius $r_A:=\frac{R}{2}$
\begin{equation*}
\omega:=\left\{y=(y_\alpha)\in \mathbb{R}^2;\sqrt{y_1^2+y_2^2}<r_A\right\}.
\end{equation*}

The middle surface of the membrane shell under consideration is a non-hemispherical spherical cap which is not in contact with the plane $\{x_3=0\}$. The parametrization we choose is $\bm{\theta} \in \mathcal{C}^2(\overline{\omega};\mathbb{E}^3)$ defined by:
\begin{equation}
\label{middlesurf}
\bm{\theta}(y):=\left(y_1, y_2, \sqrt{R^2-y_1^2-y_2^2}-0.85\right),\quad\textup{ for all } y=(y_\alpha) \in \overline{\omega}.
\end{equation}

Throughout this section, the values of $\varepsilon$, $\lambda$, $\mu$ and $R$ are fixed as follows
\begin{equation*}
	\begin{aligned}
		\varepsilon&=0.001,\\
		\lambda&=0.4,\\
		\mu&=0.012,\\
		R&=1.0.
	\end{aligned}
\end{equation*}

The applied body force density $\bm{p}^\varepsilon=(p^{i,\varepsilon})$ entering the first two batches of experiments is given by $\bm{p}^\varepsilon=(0,0,g(y))$, where
$$
g(y):=
\begin{cases}
-\frac{2\varepsilon}{25}(-5.0 y_1^2-5.0 y_2^2+0.295), &\textup{ if } |y|< 0.060,\\
0, &\textup{otherwise}.
\end{cases}
$$

We let $\bm{q}=(0,0,1)$.
We observe that even though $g$ defined as above is not of class $H^1(\omega)$, the numerical results we obtained comply with the theoretical results obtained in Theorem~\ref{ex-un-kappa} and Theorem~\ref{th:conv}.

The expressions of the geometrical parameters (i.e., the covariant and contravariant bases, the first fundamental form in covariant and contravariant components, the second fundamental form in covariant and mixed components, etc.) associated with the middle surface~\eqref{middlesurf} were computed by means of the symbolic computer provided by MATLAB~\cite{DPSY2023}.
The numerical simulations are performed by means of the software FEniCS~\cite{Fenics2016} and the visualization is performed by means of the software ParaView~\cite{Ahrens2005}.
The plots were created by means of the \verb*|matplotlib| libraries from a Python~3.9.8 installation.

The first batch of numerical experiments is meant to validate the claim of Theorem~\ref{ex-un-kappa}. We fix the mesh size $0<h<<1$ and we let $\kappa=h^q$ in Problem~\ref{problem2}. Consider a sequence of exponents $\{q_\ell\}_{\ell=1}^\infty$ such that $q_\ell \to \infty$ as $\ell \to\infty$ and let $\bm{\zeta}^{\varepsilon,h}_{h^{q_n}}$ and $\bm{\zeta}^{\varepsilon,h}_{h^{q_m}}$ be the solutions of Problem~\ref{problem2} corresponding to $\kappa=h^{q_n}$ and $\kappa=h^{q_m}$ respectively.
The experiments whose results are shown in Figures~\ref{fig:1}--\ref{fig:4} an Tables~\ref{table:1}--\ref{table:4} below show that $\|\bm{\zeta}^{\varepsilon,h}_{h^{q_n}}-\bm{\zeta}^{\varepsilon,h}_{h^{q_m}}\|_{\bm{V}_M(\omega)} \to 0$ as $m,n \to\infty$. The algorithm stops when $\|\bm{\zeta}^{\varepsilon,h}_{h^{q_n}}-\bm{\zeta}^{\varepsilon,h}_{h^{q_m}}\|_{\bm{V}_M(\omega)}< 2.0 \times 10^{-6}$.

Each component $\zeta^{\varepsilon}_{\kappa,i}$ of Problem~\ref{problem2} is discretized by Lagrange triangles (cf., e.g., \cite{PGCFEM}) and homogeneous Dirichlet boundary conditions are imposed for all the components. The reason why the transverse component $\zeta^{\varepsilon}_{\kappa,3}$ was imposed to be subjected to this boundary condition is that Problem~\ref{problem1} is derived as a result of a rigorous asymptotic analysis starting from Koiter's model~\cite{CiaPie2018bCR,CiaPie2018b}. The fact that the transverse component of the solution of Koiter's model is of class $H^2_0(\omega)$ makes a boundary layer appear (viz. Section~7.3 of~\cite{Ciarlet2000}) and justifies our choice for this boundary condition, without which the boundary would be pushed down to the obstacle when, clearly, this is not the case. The higher regularity of the solution of Problem~\ref{problem2} (viz.~\eqref{conclusion-7}) and the higher regularity of the solution of Koiter's model for elliptic membranes subject to an obstacle, which can be derived by adapting the argument of Theorem~\ref{aug:int} and Theorem~\ref{aug:bdry} to the proof in~\cite{Iosifescu1994} justify the choice for the boundary condition of the transverse component.
At each iteration, Problem~\ref{problem3} is solved by Newton's method.

\begin{table}[H]
	\begin{varwidth}[b]{0.6\linewidth}
		\centering
		\begin{tabular}{ c c c l }
			\toprule
			Iteration& $q_n$ &$q_m$& Error \\
			\midrule
			1&0.5 &1.0& 0.0009870505482918299\\
			2&1.0 &1.5& 0.0005716399376703707\\
			3&1.5 &2.0& 0.0003259806690885746\\
			4&2.0 &2.5& 0.0001851239908727575\\
			5&2.5 &3.0& 0.00010447749338622102\\
			6&3.0 &3.5& 5.8930703167247946e-05\\
			7&3.5 &4.0& 3.2967335748701205e-05\\
			8&4.0 &4.5& 1.928599264387323e-05\\
			9&4.5 &5.0& 1.0777591612766316e-05\\
			10&5.0 &5.5& 5.9221185866507025e-06\\
			11&5.5 &6.0& 3.545734351957462e-06\\
			12&6.0 &6.5& 2.595888616762957e-06\\
			13&6.5 &7.0& 2.1107364521837126e-06\\
			14&7.0 &7.5& 1.958867087445544e-06\\
			\bottomrule
		\end{tabular}
		\caption{Verification of Theorem~\ref{ex-un-kappa} for $h=0.03123779990753546$ fixed and $q$ varying}
		\label{table:1}
	\end{varwidth}%
\hspace{0.1cm}
	\begin{minipage}[b]{0.4\linewidth}
		\centering
		\includegraphics[width=\textwidth]{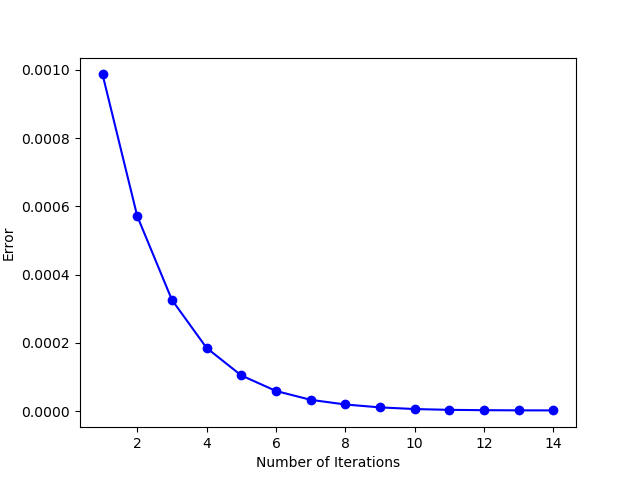}
		\captionof{figure}{The residual $\|\bm{\zeta}^{\varepsilon,h}_{h^{q_n}}-\bm{\zeta}^{\varepsilon,h}_{h^{q_m}}\|_{\bm{V}_M(\omega)}$ becomes lower than the tolerance after fourteen iterations.}
		\label{fig:1}
	\end{minipage}
	\hspace{0.1cm}
	\begin{minipage}[b]{0.4\linewidth}
		\centering
		\includegraphics[width=\textwidth]{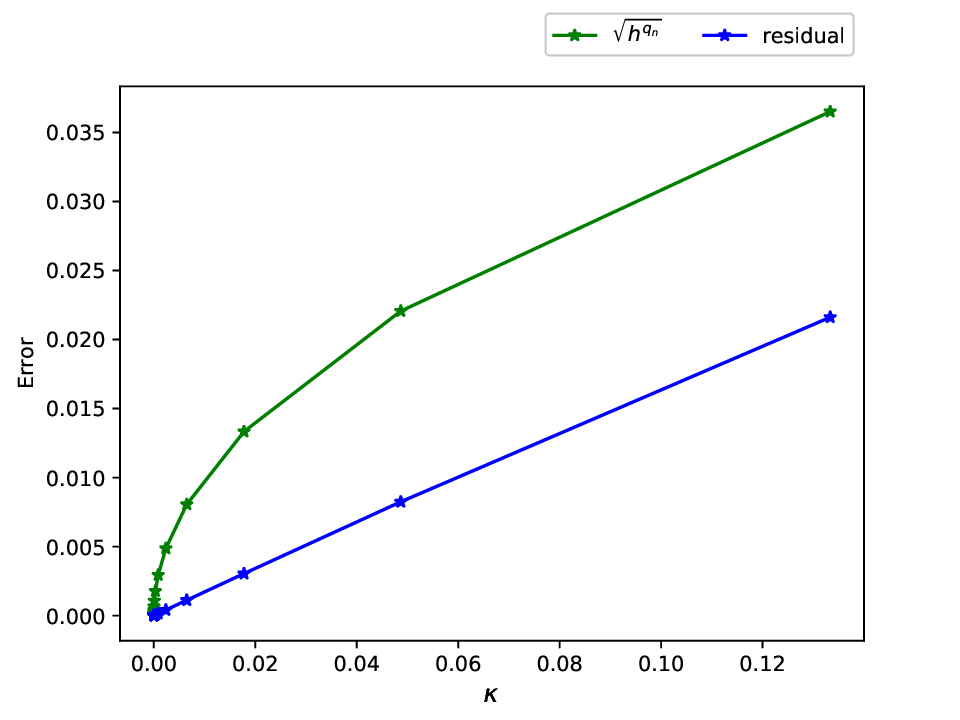}
		\captionof{figure}{Comparison between the residual $\|\bm{\zeta}^{\varepsilon,h}_{h^{q_n}}-\bm{\zeta}^{\varepsilon,h}_{h^{q_m}}\|_{\bm{V}_M(\omega)}$, in blue, and the function $\frac{\sqrt{h^{q_n}}}{10}$, in green.}
		\label{fig:1b}
	\end{minipage}
\end{table}

\begin{table}[H]
	\begin{varwidth}[b]{0.6\linewidth}
		\centering
		\begin{tabular}{ c c c l }
			\toprule
			Iteration&$q_n$ &$q_m$& Error \\
			\midrule
			1&0.5 &1.0& 0.0008589020335743345\\
			2&1.0 &1.5& 0.00024578598359837673\\
			3&1.5 &2.0& 6.925018104565528e-05\\
			4&2.0 &2.5& 1.943169742921457e-05\\
			5&2.5 &3.0& 5.4843823503599594e-06\\
			6&3.0 &3.5& 1.5246502664061824e-06\\
			\bottomrule
		\end{tabular}
		\caption{Verification of Theorem~\ref{ex-un-kappa} for $h=0.007812398571396802$ fixed and $q$ varying}
		\label{table:3}
	\end{varwidth}%
	\hspace{0.1cm}
	\begin{minipage}[b]{0.4\linewidth}
		\centering
		\includegraphics[width=\textwidth]{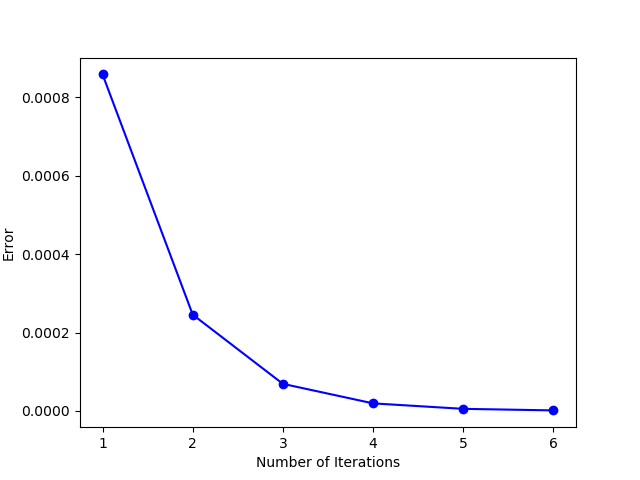}
		\captionof{figure}{The residual $\|\bm{\zeta}^{\varepsilon,h}_{h^{q_n}}-\bm{\zeta}^{\varepsilon,h}_{h^{q_m}}\|_{\bm{V}_M(\omega)}$ becomes lower than the tolerance after six iterations.}
		\label{fig:3}
	\end{minipage}
\hspace{0.1cm}
\begin{minipage}[b]{0.4\linewidth}
	\centering
	\includegraphics[width=\textwidth]{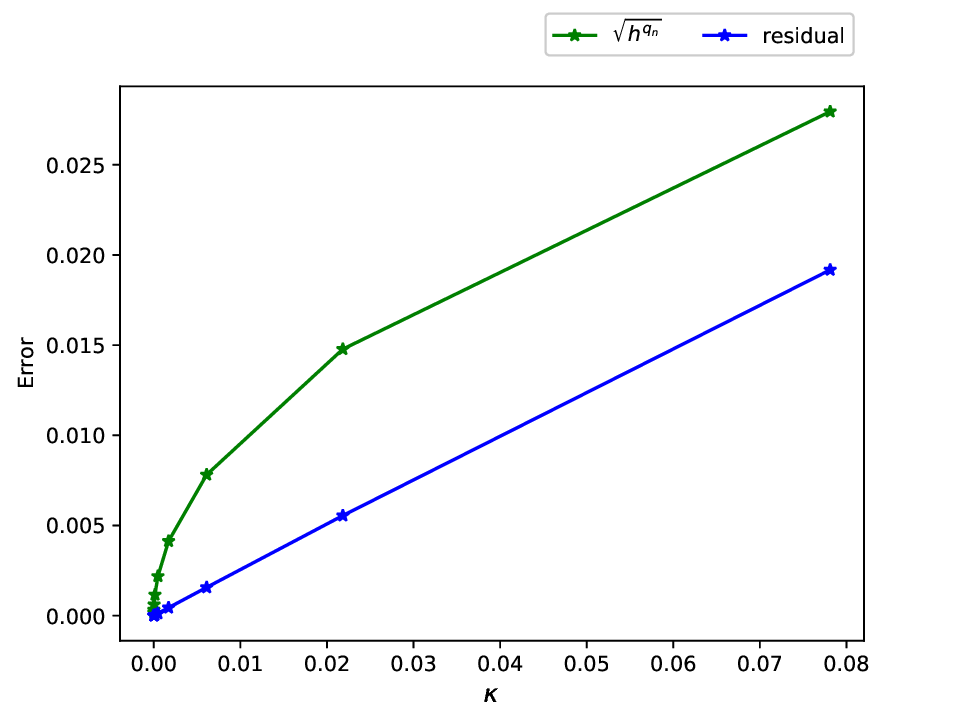}
	\captionof{figure}{Comparison between the residual $\|\bm{\zeta}^{\varepsilon,h}_{h^{q_n}}-\bm{\zeta}^{\varepsilon,h}_{h^{q_m}}\|_{\bm{V}_M(\omega)}$, in blue, and the function $\frac{\sqrt{h^{q_n}}}{10}$, in green.}
	\label{fig:3b}
\end{minipage}
\end{table}

\begin{table}[H]
	\begin{varwidth}[b]{0.6\linewidth}
		\centering
		\begin{tabular}{ c c c l }
			\toprule
			Iteration&$q_n$ &$q_m$& Error \\
			\midrule
			1&0.5 &1.0& 0.0006853937021067343\\
			2&1.0 &1.5& 0.0001389653237533636\\
			3&1.5 &2.0& 2.8767966192506784e-05\\
			4&2.0 &2.5& 6.876855587433019e-06\\
			5&2.5 &3.0& 1.1588084240614098e-06\\
			\bottomrule
		\end{tabular}
		\caption{Verification of Theorem~\ref{ex-un-kappa} for $h=0.0039062328553237536$ fixed and $q$ varying}
		\label{table:4}
	\end{varwidth}%
	\hspace{0.1cm}
	\begin{minipage}[b]{0.4\linewidth}
		\centering
		\includegraphics[width=\textwidth]{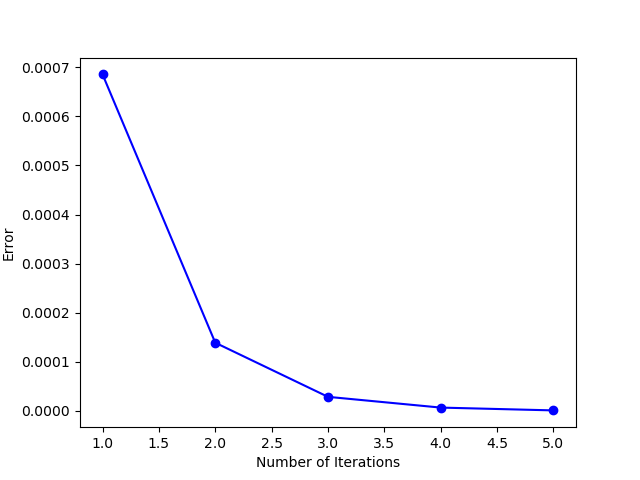}
		\captionof{figure}{The residual $\|\bm{\zeta}^{\varepsilon,h}_{h^{q_n}}-\bm{\zeta}^{\varepsilon,h}_{h^{q_m}}\|_{\bm{V}_M(\omega)}$ becomes lower than the tolerance after five iterations.}
		\label{fig:4}
	\end{minipage}
\hspace{0.1cm}
\begin{minipage}[b]{0.4\linewidth}
	\centering
	\includegraphics[width=\textwidth]{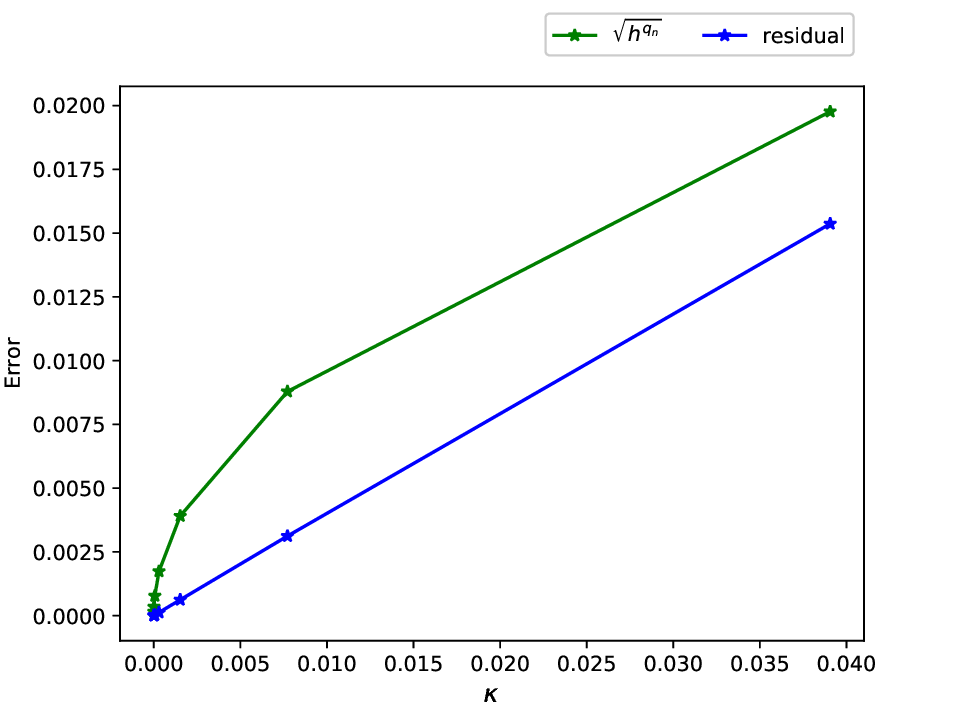}
	\captionof{figure}{Comparison between the residual $\|\bm{\zeta}^{\varepsilon,h}_{h^{q_n}}-\bm{\zeta}^{\varepsilon,h}_{h^{q_m}}\|_{\bm{V}_M(\omega)}$, in blue, and the function $\frac{\sqrt{h^{q_n}}}{10}$, in green.}
	\label{fig:4b}
\end{minipage}
\end{table}

From the data patterns in Figures~\ref{fig:1}--\ref{fig:4b} we observe that as $h$ decreases (and so $\kappa$ increases) less iterations are needed to reach the tolerance triggering the stopping criterion. This is coherent with the conclusion of Theorem~\ref{ex-un-kappa}.

The second batch of numerical experiments is meant to validate the claim of Theorem~\ref{th:conv}. 
We show that, for a fixed $0< q <2$, the error $\|\bm{\zeta}^{\varepsilon,h_1}_{h_1^{q}}-\bm{\zeta}^{\varepsilon,h_2}_{h_2^{q}}\|_{\bm{V}_M(\omega)}$ tends to zero as $h_1, h_2 \to 0^+$.
The results of these experiments are reported in Figure~\ref{fig:5} below.

\begin{figure}[H]
	\centering
	\begin{subfigure}[b]{0.3\linewidth}
		\includegraphics[width=1.0\linewidth]{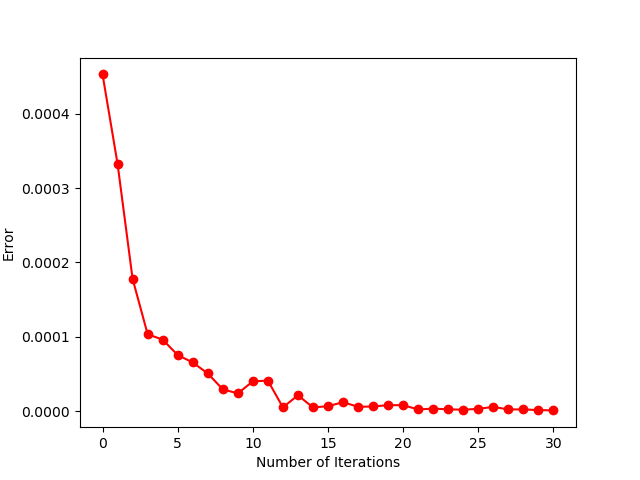}
		\subcaption{For $q=0.4$ the stopping criterion of the Cauchy sequence is reached when $h=0.00268$}
	\end{subfigure}%
	\hspace{0.5cm}
	\begin{subfigure}[b]{0.3\linewidth}
		\includegraphics[width=1.0\linewidth]{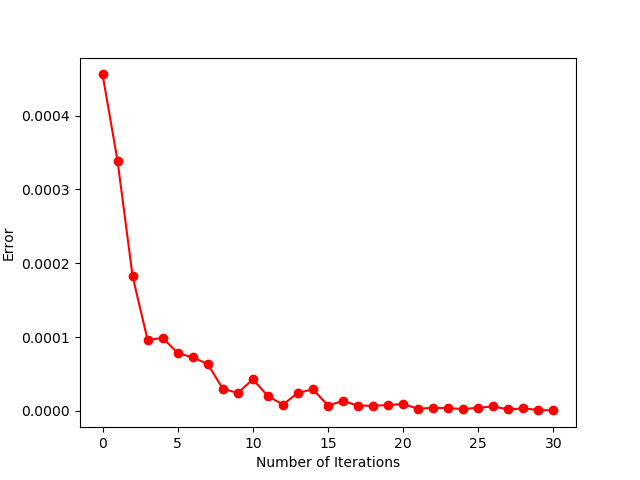}
		\subcaption{For $q=0.5$ the stopping criterion of the Cauchy sequence is reached when $h=0.00268$}
	\end{subfigure}%
	\hspace{0.5cm}
	\begin{subfigure}[b]{0.3\linewidth}
		\includegraphics[width=1.0\linewidth]{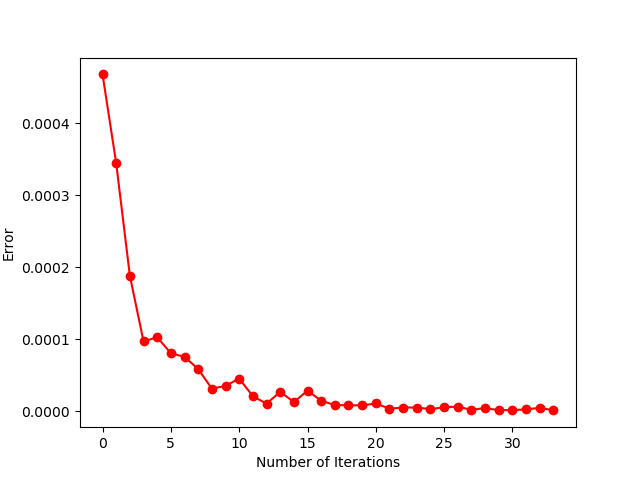}
		\subcaption{For $q=0.6$ the stopping criterion of the Cauchy sequence is reached when $h=0.00249$}
	\end{subfigure}%
\end{figure}

\begin{figure}[H]
	\ContinuedFloat
	\centering
	\begin{subfigure}[b]{0.3\linewidth}
		\includegraphics[width=1.0\linewidth]{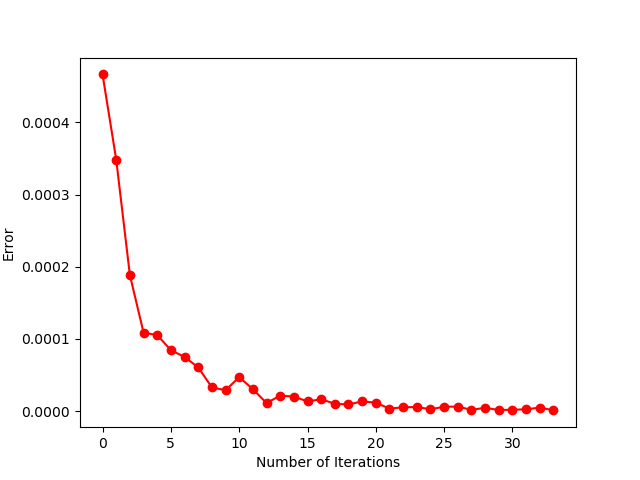}
		\subcaption{For $q=0.7$ the stopping criterion of the Cauchy sequence is reached when $h=0.00249$}
	\end{subfigure}%
	\hspace{0.5cm}
	\begin{subfigure}[b]{0.3\linewidth}
		\includegraphics[width=1.0\linewidth]{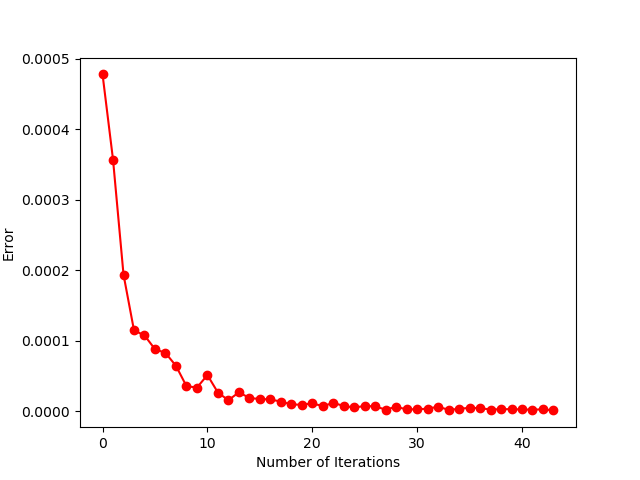}
		\subcaption{For $q=1.0$ the stopping criterion of the Cauchy sequence is reached when $h=0.00199$}
	\end{subfigure}%
	\hspace{0.5cm}
	\begin{subfigure}[b]{0.3\linewidth}
		\includegraphics[width=1.0\linewidth]{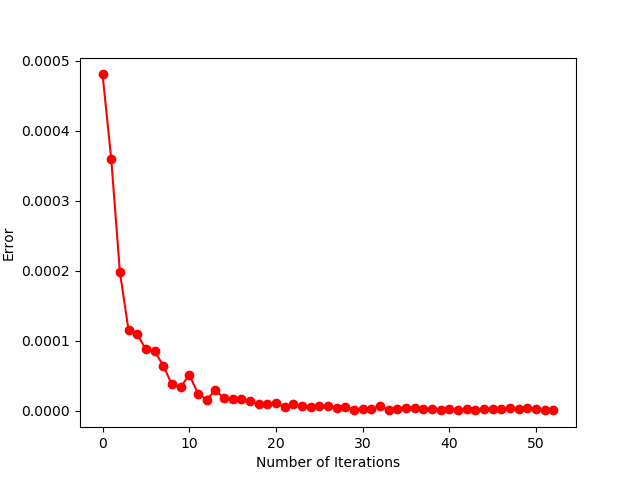}
		\subcaption{For $q=1.3$ the stopping criterion of the Cauchy sequence is reached when $h=0.00169$}
	\end{subfigure}%
	\caption{Given $0<q<2$, the error $\|\bm{\zeta}^{\varepsilon,h_1}_{h_1^{q}}-\bm{\zeta}^{\varepsilon,h_2}_{h_2^{q}}\|_{\bm{V}_M(\omega)}$ converges to zero as $h_1, h_2 \to0^+$. The value of $h$ for which the algorithm stops decreases as $q$ increases.}
	\label{fig:5}
\end{figure}

The third batch of numerical experiments validates the genuineness of the model.
We observe that the presented data exhibits the pattern that, for a fixed $0<h<<1$ and a fixed $0< q <2$, the contact area increases as the applied body force intensity increases.
For the third batch of experiments, the applied body force density $\bm{p}^\varepsilon=(p^{i,\varepsilon})$ entering the model is given by $\bm{p}^\varepsilon=(0,0,g_\ell(y))$, where $\ell$ is a nonnegative integer and
$$
g_\ell(y):=
\begin{cases}
-\frac{2\varepsilon}{25}(-5.0 y_1^2-5.0 y_2^2+(1+0.05 \ell)\times 0.295), &\textup{ if } |y|< 0.060,\\
0, &\textup{otherwise}.
\end{cases}
$$

We observe that even though $g_\ell$ defined as above is not of class $H^1(\omega)$, the numerical results we obstained comply with the Physics, in the sense that the contact area increases as the intensity of the applied body force increases.
The results of these experiments are reported in Figure~\ref{fig:6} below.

\newpage

\begin{figure}[H]
	\centering
	\begin{subfigure}[b]{0.4\linewidth}
		\includegraphics[width=\textwidth]{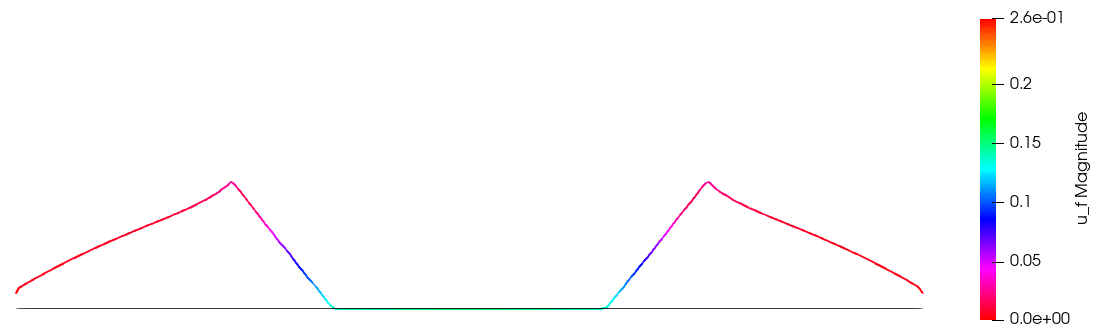}
		\subcaption{$\ell=0$}
	\end{subfigure}%
	\hspace{0.5cm}
	\begin{subfigure}[b]{0.4\linewidth}
		\includegraphics[width=1.0\textwidth]{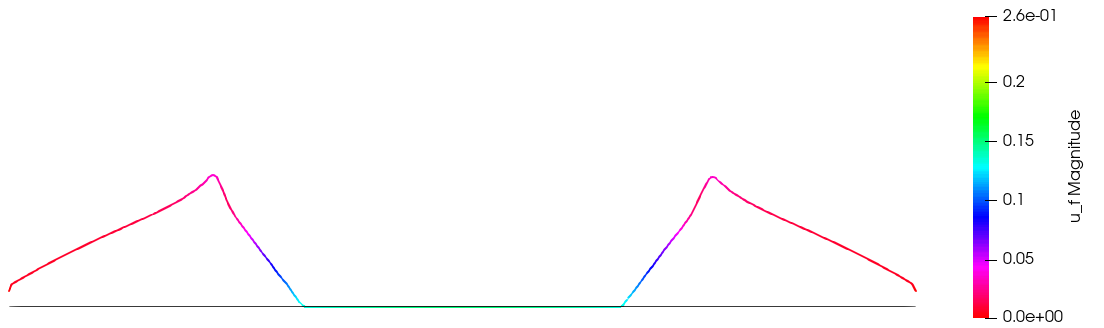}
		\subcaption{$\ell=3$}
	\end{subfigure}%
\end{figure}

\begin{figure}[H]
	\ContinuedFloat
	\centering
	\begin{subfigure}[b]{0.4\linewidth}
		\includegraphics[width=\textwidth]{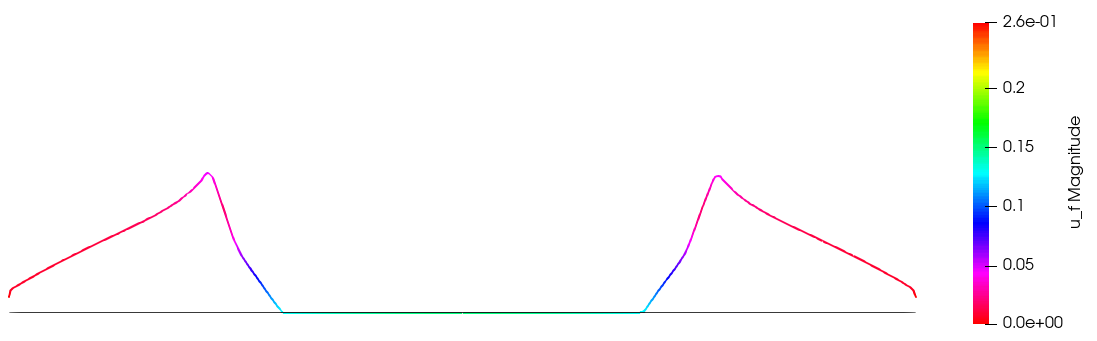}
		\subcaption{$\ell=6$}
	\end{subfigure}%
	\hspace{0.5cm}
	\begin{subfigure}[b]{0.4\linewidth}
		\includegraphics[width=\textwidth]{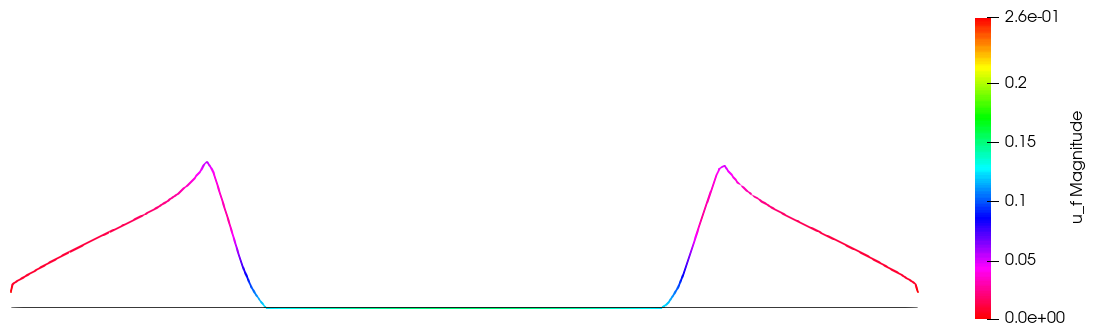}
		\subcaption{$\ell=9$}
	\end{subfigure}%
	\caption{Cross sections of a deformed membrane shell subjected not to cross a given planar obstacle.
		Given $0<h<<1$ and $0<q<2$ we observe that as the applied body force magnitude increases the contact area increases.}
	\label{fig:6}
\end{figure}

\section*{Conclusions and Commentary}

In this paper we established the convergence of a numerical scheme based on the Finite Element Method for approximating the solution of a set of variational inequalities modelling the deformation of a linearly elastic elliptic membrane shell subject to remaining confined in a prescribed half space. 

Instead of directly approximating the solution of the variational inequalities, we approximate the solution of the corresponding penalized variational formulation with respect to the norm of the space where the solution of this penalized problem is sought. Moreover, we also show that the iterative method proposed by Brezis and Sibony can be applied to approximate the solution of the discrete penalized problem under consideration with respect, however, to a weaker norm.

The main novelty introduced in this paper is the overcoming of the condition $(\ast)$ introduced by Scholz~\cite{Scholz1984}.
Indeed, since the second order differential operator we are considering takes into account all the components of the solution, which is a vector field with values in the Euclidean space $\mathbb{E}^3$, it is not straightforward to re-write the condition $(\ast)$ introduced by Scholz~\cite{Scholz1984} in a vectorial context. We instead assume that the middle surface of the linearly elastic shell under consideration satisfies a certain geometrical assumption, which is the same assumption ensuring the validity of the ``density property'' introduced in~\cite{CiaMarPie2018b,CiaMarPie2018}.

The method we presented in this paper is, however, in general not applicable to fourth order obstacle problems like the one studied by L\'eger \& Miara~\cite{Leger2008,Leger2010}, and for which a suitable numerical scheme was studied in~\cite{PS}. The reason why the methodology presented in this paper is not applicable to fourth order problems is due to the fact that the solution of fourth order obstacle problems is not in general of class $H^4$ over its definition domain. This limitation was established by Caffarelli and his associates in the papers~\cite{Caffarelli1979,CafFriTor1982}.

In order to study the convergence of the finite element analysis addressed in the paper~\cite{PS}, an interior $\mathcal{C}^0$ penalty method based on a nonconforming finite element of Morley type had to be exploited. The choice of the nonconforming finite element of Morley type is motivated by the fact that the highest regularity one can achieve for the considered problem is $H^3$ over the definition domain. One such regularity is sufficient to apply a suitable Green's formula for establishing the convergence of the finite element scheme in~\cite{PS}.

We also observe that the penalty method discussed in this paper is, in the context of a finite element analysis, more easily applicable than the primal-dual active set method~\cite{SunYuan2006}. The latter is particularly amenable in the context of the optimization of problems the solution of which is a real-valued functions or a vector field for which the constraint bears on the transverse component~\cite{PWDT2D, PWDT3D}.

\section*{Declarations}

\noindent\textbf{Authors’ Contribution}. All authors have contributed to the realisation of this manuscript in equal manner.

\vspace{.5cm}

\noindent\textbf{Acknowledgements}. Not applicable

\vspace{.5cm}

\noindent\textbf{Ethical Approval}. Not applicable.

\vspace{.5cm}

\noindent\textbf{Availability of Supporting Data}. Not applicable.

\vspace{.5cm}

\noindent\textbf{Competing Interests}. All authors certify that they have no affiliations with or involvement in any organi- zation or entity with any competing interests in the subject matter or materials discussed in this manuscript.

\vspace{.5cm}

\noindent\textbf{Funding}. P.P. and A.M. were partly supported by the Research Fund of Indiana University and by the National Science Foundation under Grant Number DMS-2051032.

\bibliographystyle{abbrvnat} 
\bibliography{references}	

\end{document}